\newtheorem{thm}{Theorem}[section]
\newtheorem{theorem}[thm]{Theorem}
\newtheorem{corollary}[thm]{Corollary}
\newtheorem{lemma}[thm]{Lemma}
\newtheorem{proposition}[thm]{Proposition}
\theoremstyle{definition}
\newtheorem{definition}[thm]{Definition}
\newtheorem{example}[thm]{Example}
\newtheorem{observation}[thm]{Observation}
\newtheorem{remark}[thm]{Remark}
\newcommand{\nn}{\nonumber}
\newcommand{\C}{\mathbb{C}}
\newcommand{\N}{\mathbb{N}}
\newcommand{\bP}{\mathbf{P}}
\newcommand{\cI}{\mathcal{I}}
\newcommand{\cO}{\mathcal{O}}
\newcommand{\cF}{\mathcal{F}}
\newcommand{\T}{a}
\newcommand{\co}{\mathrm{co}}
\newcommand{\rGL}{\mathrm{GL}}
\newcommand{\rSL}{\mathrm{SL}}
\newcommand{\rO}{\mathrm{O}}
\newcommand{\rSp}{\mathrm{Sp}}
\newcommand{\Lie}{\mathrm{Lie}}
\newcommand{\lra}{\longrightarrow}
\newcommand{\beq}{\begin{equation}}
\newcommand{\eeq}{\end{equation}}
\newcommand{\Ubar}{\overline{U}}
\newcommand{\gu}[1]{u^{\scriptscriptstyle{(#1)}}}
\newcommand{\gn}[1]{n^{\scriptscriptstyle{(#1)}}}
\newcommand{\gb}[1]{b^{\scriptscriptstyle{(#1)}}}
\newcommand{\zero}[1]{{#1}_{\scriptscriptstyle{(0)}}}
\newcommand{\one}[1]{{#1}_{\scriptscriptstyle{(1)}}}
\newcommand{\two}[1]{{#1}_{\scriptscriptstyle{(2)}}}
\newcommand{\three}[1]{{#1}_{\scriptscriptstyle{(3)}}}
\newcommand{\tone}[1]{{#1}^{\scriptscriptstyle{<1>}}}
\newcommand{\ttwo}[1]{{#1}^{\scriptscriptstyle{<2>}}}
\newcommand{\tuno}[1]{{#1}^{\scriptscriptstyle{<1>}}}
\newcommand{\tdue}[1]{{#1}^{\scriptscriptstyle{<2>}}}
\title[Title] 
{Reduction of Quantum Principal Bundles \\~\\ over non affine bases} 
\date{March 2024}
\author{Rita Fioresi, Emanuele Latini, Chiara Pagani}
\address[]{\textit{Rita Fioresi}  
\newline \indent    Alma Mater Studiorum Universit\`a di Bologna,
\newline \indent FaBiT,  via S. Donato 15, 40126 Bologna, Italy
\newline \indent Istituto Nazionale di Fisica Nucleare, Sezione di Bologna,  40126 Bologna, Italy.}
\email{rita.fioresi@unibo.it}
\address[]{\textit{Emanuele Latini} 
\newline \indent    Alma Mater Studiorum Universit\`a di Bologna,
\newline \indent Dipartimento di
Matematica,  Piazza di Porta S. Donato, 5, 40126 Bologna, Italy
\newline \indent Istituto Nazionale di Fisica Nucleare, Sezione di Bologna,  40126 Bologna, Italy.}
\email{emanuele.latini@unibo.it}
\address[]{\textit{Chiara Pagani} 
\newline \indent    Alma Mater Studiorum Universit\`a di Bologna,
\newline \indent Dipartimento di
Matematica,  Piazza di Porta S. Donato, 5, 40126 Bologna, Italy.}
\email{c.pagani@unibo.it}
\begin{document}

\begin{abstract}
In this paper we develop the theory of reduction of 
quantum principal bundles over   projective bases. We show how the
sheaf theoretic approach can be
effectively applied to certain relevant examples as the Klein model for the projective spaces; in particular we study in the algebraic setting the reduction of the principal bundle  $\rGL(n) \to \rGL(n)/P= \bP^{n-1}(\C)$ to  the Levi subgroup $G_0$ inside the maximal parabolic subgroup $P $ of  $\rGL(n)$.  
We characterize reductions in the sheaf theoretic setting.
\end{abstract}
\maketitle

\section{Introduction} \label{intro-sec}

A reduction of a principal $P$--bundle $E\to M$ to the structure
group $K$, with $K\subset P$ a subgroup, is the data of a principal
$K$ bundle $E_0\to M$ and a bundle morphism
$\phi:E_0\to E$.
Many relevant geometrical structures arise in this way.
Consider for example  a $K$ structure, that is the reduction to $K\subset \rGL_n(\mathbb{R})$ of the principal
$\rGL_n(\mathbb{R})$ frame bundle
$\mathcal{P}M\to M$. When $K=\rO_n(\mathbb{R})$ one gets a Riemannian structure on $M$, while when $n$ is even, and $K=\rSp_n(\mathbb{R})$
the reduction of the frame bundle induces an almost
symplectic structure on $M$.
In the context of parabolic geometries (see for example \cite{cap}), one considers a graded Lie
algebra $\mathfrak{g}=\mathfrak{n}_-\oplus \mathfrak{g}_0\oplus \mathfrak{n}_+$
and a principal $P$--bundle $\mathcal{G}\to M$ equipped with a
Cartan connection valued in the Lie algebra $\mathfrak{g}$,
where $\Lie(P)=\mathfrak{p}:=\mathfrak{g}_0\oplus \mathfrak{n}_+$; the image under the exponential map of $\mathfrak{n}_+$
is a closed nilpotent subgroup of $P$, that we denote by $N$, and
$P/N\cong G_0$ is the Levi subgroup of $P$, so that we have $P=G_0\ltimes N$.
It is interesting to observe that many information on the underlying geometrical structure are contained
in the reduced $G_0$--bundle $\mathcal{G}/N\to M$. Viewing this bundle  as a reduction of the original one
requires making a choice, namely an equivariant section of the projection $\mathcal{G}\to \mathcal{G}/N$.
This is exactly the choice of a Weyl structure and it turns out that any reduction of the
original bundle to the structure group $G_0$ is obtained in this way (see \cite[Proposition 5.1.1]{cap}).
When the total space is a Lie group $G$ `exponentiating'
the Lie algebra $\mathfrak{g}$, one obtains  the Klein description of 
flag varieties. Inspired by that, in  this manuscript we focus on the quantization of those structures. 

Starting from the pioneering work on Hopf-Galois extensions
 \cite{sch90} and quantum principal bundles  \cite{brz-maj}, quantum homogeneous spaces have been an important object of study
in the past years. Stimulated by the canonical non commutative differential calculi developed in \cite{HK1, HK2}, (see also \cite{bm} and refs. therein for a more general discussion on differential calculi on comodule algebras)  there was a recent revived interest on quantum flag varieties (see e.g. \cite{buachalla,Car-Bua}). It is important to observe that the classical framework detailed above, namely the one inspired by {\sl parabolic geometries},  cannot be fully
understood and treated by looking at affine objects only.  For example given a complex algebraic group $G$ and a parabolic subgroup $P$ as before, the model geometry $G/P$ 
is projective; when looking at the principal $P$-bundle $G \lra G/P$,
affine algebras are only a local model and cannot be used for the global
Hopf-Galois description, as in the above mentioned works. This problem was faced and solved only recently in \cite{afl} via the  use of sheaf theoretic 
methods,   introduced in \cite{pflaum, cp}, and non commutative algebraic geometric techniques
that were already conceptually introduced much earlier, in \cite{vov, ro}. Those techniques also proved essential to build
non commutative differential calculi on
quantum principal bundles \cite{AFLW}  and more specifically to treat quantum flag manifolds \cite{fi1, fi4, cfg}.

In this paper we  study reductions of quantum principal
bundles in the sheaf theoretic language. We will start from the affine
case, introduced in \cite{gunther,Sch}, where a full characterization of
affine reductions is provided (see also \cite{hajac-red}). Then, we will proceed to define quantum reductions and
examine in detail the example of the general linear quantum
group over the quantum projective space. 

The sheaf theoretic approach is then necessary to capture the notion
of principal bundle  when looking
for reduction for bundles over non affine bases, as the case of parabolic
geometries and reductions  of the structure group from the parabolic to its Levi.  

\medskip
The organization of this paper is as follows.

\medskip
In \S \ref{class-sec} we briefly recap the classical setting,
discussing a simple example, elucidating the key conceptual steps in our subsequent treatment.

In \S \ref{qpb-sec} we lay down the main definitions and establish
our notation. We start from a sheaf theoretic definition of
{\sl quantum principal bundle} as in \cite{afl}, then building the  
notion of reduction (Def. \ref{qred-def}) from a local affine model (Def. \ref{affine-red}).

In \S  \ref{ex-sec}  we construct examples of  reductions starting from the Takhtajan-Sudbery algebra of the quantum general linear group. These are given by quantum principal bundles over projective spaces described by sheaves of $\cO_q(P)$ and $\cO_q(G_0)$ comodule algebras with $P$ being  the maximal parabolic subgroup of $\rGL(n)$ and $G_0$ its Levi.

Finally, in \S  \ref{red-sec}, we provide a criterion (Thm. \ref{prop:gunther-sheaf})
giving a reduction of a given quantum principal bundle  (as a sheaf), based on the
affine (local) counterpart  \cite[Thm. 4]{gunther}.

\bigskip

{\bf Acknowledgements.} We thank Paolo Aschieri, Andreas  \v{C}ap and Thomas Weber for helpful discussions.
This research was supported by Gnsaga-Indam, by
COST Action CaLISTA CA21109, HORIZON-MSCA-2022-SE-01-01 CaLIGOLA, MSCA-DN CaLiForNIA - 101119552,
PNRR MNESYS, PNRR National Center for HPC, Big Data and Quantum Computing, INFN Sezione Bologna.  
CP is grateful to the Department of Mathematics, Informatics and Geosciences of Trieste University for the hospitality.

\section{The classical setting}\label{class-sec}
We recall the definition and some classical results about
restriction and prolongation  of principal bundles, following 
\cite[Chs. 2 and 6]{hus}.

\begin{definition}
Let $E$ and $M$ be topological spaces, $P$ a topological group
and $\pi: E \lra M$ a surjective continuous function. The data $\xi=(E, \pi, M)$
is a $P$-\textit{principal bundle} with \textit{total space} $E$ and \textit{base}
$M$, if
\begin{enumerate}
\item $P$ acts freely from the right on $E$;
\item $P$ acts transitively on the fiber $\pi^{-1}(m)$ of each point $m \in M$.
\end{enumerate}
\end{definition}

From now on, assume $K$ to be a closed subgroup of a group $P$. We will sometimes denote a bundle just by $\pi:E\to M$ or simply $E\to M$.

\begin{definition}\label{red-def}
Let $\xi=(E, \pi, M)$ be a principal $P$-bundle and $\xi_0=(E_0, \pi_0, M)$
a principal $K$-bundle over same base $M$.  The bundle 
$\xi_0$ is a \textit{restriction}, or \textit{reduction}, of $\xi$ if
there is a $K$-equivariant homeomorphism
$$
\varphi:E_0\to \phi(E_0) \subset E, \qquad \varphi(x k)= \varphi(x) k,
\quad   x \in E_0, \, k \in K,
$$
onto a closed subset $\varphi(E_0)$ of $E$. We equivalently say that 
$\xi$ is \textit{reducible} to $\xi_0$ and we call
$\xi$ a \textit{prolongation} of $\xi_0$.
\end{definition}

\medskip

On the one hand, it is known that a prolongation of a principal bundle always exists. In fact
if $\xi_0=(E_0, \pi_0, M)$ is a 
$K$-principal bundle, for $K$ a closed subgroup of $P$,
one can always construct
a $P$-principal bundle $\xi$ which is a prolongation of $\xi_0$.
Consider $(E_0 \times P) \slash K$ 
defined out of the following $K$-action,
$$
(x,p) k= (xk, k^{-1} p), \qquad
\hbox{for} \, x \in E_0, \, k \in K \, \hbox{and} \,
p \in P,
$$
and the (well-defined) projection map 
$\pi: (E_0 \times P) \slash K \to  M, [(x,p)] \to \pi_0(x)$. Then
$\xi:=((E_0 \times P) \slash K, \pi, M)$ is a $P$-principal bundle which is a prolongation of $\xi_0$ with morphism $\varphi: E_0 \to (E_0 \times P) \slash K$, $x \mapsto [(x,e)]$. 

On the other hand, the restriction is not always possible; for example
for  $K=\{e\} \subset P$, a principal $P$--bundle admits a restriction to a $K$-principal bundle
if and only if it is trivial.

Using  the same  notation  as above, the following classical result holds  (see e.g. \cite[Ch. 6, Thm. 2.3]{hus}):

\begin{proposition} \label{thm-class-red}
A principal $P$--bundle $\xi=(E, \pi, M)$
is reducible to a principal $K$--bundle $\xi_0=(E_0, \pi_0, M)$ 
if and only if the
bundle $ \xi_K := (E \setminus K ,\pi_K, M)$,   with ${\pi}_K$ being the projection induced by $\pi$ on $E \setminus K$, 
admits a global section.
\end{proposition}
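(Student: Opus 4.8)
The plan is to prove the two implications separately, the bridge between them being the orbit space $E/K$ (the space written $E\setminus K$ in the statement), equipped with the induced projection $\pi_K\colon E/K\to M$ and the canonical quotient map $q\colon E\to E/K$. The guiding idea is a dictionary: a global section of $\pi_K$ is the same datum as a coherent choice, over each fibre $\pi^{-1}(m)$, of a single $K$-orbit, and such a choice is exactly what cuts out the total space of a $K$-subbundle. I would first record that, since $P$ acts freely and transitively on each fibre, $\pi^{-1}(m)$ is a $P$-torsor and $q$ restricted to it is the orbit projection onto $\pi^{-1}(m)/K\cong\pi_K^{-1}(m)$.

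For the forward implication, assume $\xi_0$ is a reduction with $K$-equivariant map $\varphi\colon E_0\to E$ satisfying $\pi\circ\varphi=\pi_0$. I would define $s\colon M\to E/K$ by $s(m)=q(\varphi(x))$ for any $x\in\pi_0^{-1}(m)$. The first thing to verify is well-definedness: if $x,x'\in\pi_0^{-1}(m)$, then transitivity of the $K$-action on the fibres of $\xi_0$ gives $x'=xk$, whence $\varphi(x')=\varphi(x)k$ and $q(\varphi(x'))=q(\varphi(x))$ by equivariance. That $s$ is a section is immediate: $\pi_K(q(\varphi(x)))=\pi(\varphi(x))=\pi_0(x)=m$. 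Continuity I would obtain by observing that $q\circ\varphi\colon E_0\to E/K$ is continuous and constant on $K$-orbits, hence factors through the induced homeomorphism $E_0/K\cong M$, yielding $s$; equivalently one composes with continuous local sections of $\pi_0$ where the bundle is locally trivial.

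For the converse, given a global section $s$ of $\pi_K$, I would set $E_0:=q^{-1}(s(M))=\{e\in E:\ q(e)=s(\pi(e))\}$, take $\pi_0:=\pi|_{E_0}$, and let $\varphi$ be the inclusion. Since $q(ek)=q(e)$ for $k\in K$, the subset $E_0$ is $K$-stable, and the $K$-action on it is free because the ambient $P$-action on $E$ is free. The crux is transitivity of $K$ on each fibre $\pi_0^{-1}(m)=q^{-1}(s(m))\cap\pi^{-1}(m)$: by the torsor remark above, $q|_{\pi^{-1}(m)}$ is the orbit map to $\pi^{-1}(m)/K$, so the preimage of the single point $s(m)$ is exactly one $K$-orbit. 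Hence $(E_0,\pi_0,M)$ satisfies the two axioms of a principal $K$-bundle, and the inclusion $\varphi$ is $K$-equivariant with $\pi\circ\varphi=\pi_0$.

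The main obstacle is topological rather than algebraic: to qualify as a reduction in the sense of Definition \ref{red-def}, $\varphi$ must be a homeomorphism onto a \emph{closed} subset of $E$. I would deduce closedness of $E_0$ by writing it as the equalizer of the two continuous maps $q$ and $s\circ\pi$ from $E$ to $E/K$, which is closed provided $E/K$ is Hausdorff. Thus the genuine care needed is to secure the relevant separation and local-triviality properties of $E/K$ and $\pi_0$ that are implicit in the classical framework of \cite{hus}; once these are in force, the continuity of $s$ in the forward direction and the closedness of $E_0$ in the converse are the only non-formal points, and both implications close up. Identifying the associated-fibre picture ($E/K$ with typical fibre $P/K$) with the orbit-space description is what makes the section-versus-orbit-choice dictionary precise.
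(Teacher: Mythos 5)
Your proof is correct and takes essentially the same approach as the paper, which (citing Husemoller) only sketches this correspondence: the reduced bundle is the preimage of the distinguished orbit --- your $q^{-1}(s(M))$ is the paper's $f^{-1}([e])$ under the standard identification of sections of $E\setminus K \to M$ with $P$-equivariant maps $f\colon E \to P\setminus K$ --- and conversely the section is obtained by descending $q\circ\varphi$ to $M$. Your additional care about continuity of $s$ and closedness of $E_0$ fills in topological points the paper leaves implicit in its reference to \cite[Ch.~6, Thm.~2.3]{hus}.
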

We recall that  the bundle $ \xi_K$  
is isomorphic to the  bundle associated to $\xi$ via the
representation of $P$ on the space $P \setminus K$ of
right cosets of $K$ in $P$
(constructed out of left $K$-action  on $P$).
Thus a section of  $\xi_K$  
can also be given
by a $P$-equivariant map $f:E \to P \setminus K$.

Explicitly, given such a map $f$, the reduced subbundle of $\xi$ can be recovered as
$E_0 := f^{-1} ([e])$.
Vice versa, having a $K$-reduction $E_0$ induced by the $K$-equivariant bundle morphism $\varphi:E_0 \to E$, the composition of $\phi$ with the projection $E \to E \setminus K$ descends to  a 
 section $M \to  E \setminus K$.

Notice that we can consider
algebraic, analytic or smooth principal $P$--bundles,  their reductions
and prolongations, by taking
objects and morphisms in suitable categories.

\subsection{Example of a reduction} \label{example-red}
We now discuss an enlightening  example.

Consider the special linear group $\rSL_2(\C)$ and the principal bundle
 $\rSL_2(\C) \to M= \bP^{1}(\C)=\rSL_2(\C)/B$, where $B$ is the Borel subgroup of upper triangular matrices; construct then the bundle
\beq \label{red-ex}
 \pi_0:\rSL_2(\C)/N \lra \bP^{1}(\C)
\eeq
where
$N \subset B$ is the nilpotent subgroup. In the above notation we have:
$$
E=\rSL_2(\C)\, , \quad P=B \, , \quad  K=T \subset B \, , \quad  E_0= \rSL_2(\C)/N
$$
where $T$ is the torus of diagonal matrices.

In particular we have that  (\ref{red-ex}) is a $T$ principal bundle and that $( \rSL_2(\C)/N,\pi_0,\bP^{1}(\C))$ is an example of reduction. We
can write a generic element of $E$ as
$$
 \begin{pmatrix} a & b  \\ c & d \end{pmatrix}=
\begin{pmatrix} a & 0 \\ c & a^{-1} \end{pmatrix}
\begin{pmatrix} 1 & a^{-1}b \\ 0 & 1 \end{pmatrix} \,  
\hbox{if} \, a\neq 0,
\quad
\begin{pmatrix} a & b  \\ c & d \end{pmatrix}=
\begin{pmatrix} a & -c^{-1}  \\ c & 0 \end{pmatrix}
\begin{pmatrix} 1 & c^{-1}d \\ 0 & 1 \end{pmatrix} \,  
\hbox{if} \, c\neq 0.
$$
Note in fact that we can choose the open cover for $E=\rSL_2(\C)=U_1 \cup U_2$,
with $U_1=\{g \in E \, | \, a\neq 0\}$, $U_2=\{g \in E \, | \, c\neq 0\}$
and coordinates $(a,b,c)$ and $(a,c,d)$ on $U_1$ and $U_2$ respectively.
Explicitly, the change of coordinates on the
intersection of these open sets is:
\beq\label{change-coord}
U_1 \ni (a,b,c) \cong \begin{pmatrix} a & b  \\ c & d \end{pmatrix} \mapsto
\begin{pmatrix} a & ac^{-1}d  \\ c & ca^{-1}b \end{pmatrix} \cong (a,c,d)
\in U_2 \; .
\eeq
We aim to construct  an injective map $E_0 \lra E$ onto a closed set of $E$.
Let $\Ubar_i=p(U_i)$, under the projection
$p: \rSL_2(\C) \lra \rSL_2(\C)/N$, with $i=1,2$. We have
$$
\rSL_2(\C)/N \cong \C^2\setminus \{(0,0)\}=\{(a,c) \neq (0,0)\}
$$
and $\Ubar_1$ (resp. $\Ubar_2$) is the open set with $a\neq 0$
(resp. $c\neq 0$).
We have two well-defined local maps $\varphi_i:\Ubar_i \lra U_i$:
$$
\begin{array}{c}
\phi_1(a,c)=\begin{pmatrix} a & 0  \\ c & a^{-1} \end{pmatrix} \qquad
\phi_2(a,c)=\begin{pmatrix} a & -c^{-1}  \\ c & 0 \end{pmatrix}
\end{array}
$$
We leave to the reader the easy check that these maps are $K$  invariant and glue to a $K$ equivariant  homeomorphism $E_0 \lra E$. Hence we have an example of a reduction according to Def. \ref{red-def}.

\medskip
It is also fruitful to examine the same example in the light of Prop.  \ref{thm-class-red}.
We define: 
$$
\begin{array}{ccc}
  f_1:U_1 & \lra & P \setminus K \\[0.5em]
  (a,b,c) & \mapsto & \begin{pmatrix} 1 & a^{-1}b \\ 0 & 1 \end{pmatrix}K
\end{array}  
\qquad ; \qquad
\begin{array}{ccc}
f_2:U_2 & \lra & P \setminus K \\[0.5em]
(a,c,d) & \mapsto & \begin{pmatrix} 1 & c^{-1}d \\ 0 & 1 \end{pmatrix}K 
\end{array}
$$
By using the change of coordinate charts given above,
one verifies that they glue to a well-defined $P$-equivariant map
$f:E \lra P \setminus K$ as in Prop. \ref{thm-class-red}, hence we can conclude again that the $T$ principal bundle (\ref{red-ex}) is a reduction of the $B$ pricnipal bundle $\rSL_2(\C)/N \lra \bP^{1}(\C)$ .

\section{Quantum Principal Bundles and Reductions}\label{qpb-sec}

In this section, after a brief summary of the theory of
quantum principal bundles and reduction in the affine
setting, we generalize the definitions of those object to incorporate a sheaf theoretic point of view.
Our main references  are
\cite{afl,AFLW, cp, gunther,  hajac-red}, and \cite{mont}
for a treatment on Hopf algebras.

\subsection{Hopf Galois Extensions.}
We first recall some standard results on
comodule algebras and Hopf--Galois extensions
to establish our notation. We will assume all algebras to be over a field.

\medskip
Let $H$ be a Hopf algebra and $A$ a \textit{right $H$-comodule algebra},
that is an algebra with a coaction 
$\delta_H: A \to  A\otimes H$, $\delta_H(a)=\zero{a} \otimes \one{a}$, which
is an algebra morphism. The space of \textit{coinvariants}
$$
A^{\co H}:=\{b \in A ~|~ \delta_H(b)=b \otimes 1 \}
$$
is a subalgebra of $A$.   
The algebra extension $A^{\co H}\subset A$ is called \textit{$H$--Galois} ({\it Hopf-Galois}),
if the \textit{canonical map} 
\beq\label{can-map}
\chi : A\otimes_{A^{\co H}} A\rightarrow A \otimes H \, ,\; a'\otimes_{A^{\co H}} a \mapsto a'a_{(0)}\otimes a_{(1)}
\eeq
is   bijective.
The inverse of $\chi$ is a left $A$-module map as $\chi$ and thus
obtained via the \textit{translation map}:
\beq\label{transl-map}
\tau:H \lra A\otimes_{A^{\co H}} A, \quad \tau(h):=\tuno{h} \otimes_{A^{\co H}} \tdue{h}, \qquad
\chi^{-1}(a\otimes h):=a\tau(h) .
\eeq
We recall, for later use, the following
standard properties of the translation map holding for each $h,k \in H$:
\begin{align}
\label{p1} 
& \tuno{h} \otimes_{A^{\co H}} \zero{\tdue{h}} \otimes \one{\tdue{h}} =
\,\tuno{\one{h}} \otimes_{A^{\co H}} \tdue{\one{h}} \otimes 
\two{h}
\\[0.5em]
\label{p2} 
& \tuno{\two{h}}  \otimes_{A^{\co H}} \tdue{\two{h}} \otimes S(\one{h}) = \zero{\tuno{h}} \otimes_{A^{\co H}} {\tdue{h}}  \otimes \one{\tuno{h}}
\\[0.5em]
\label{tau-st}
& \tuno{(hk)} \otimes_{A^{\co H}} \tdue{(hk)}=\tuno{k}\tuno{h}\otimes_{A^{\co H}} \tdue{h}\tdue{k} \; .
\end{align}

An extension $A^{\co H}\subset A$ is called \textit{cleft} if one of the following equivalent condition
is verified (see e.g. \cite[Thm. 7.2.2, Thm. 8.2.4]{mont}): 
\begin{enumerate}
\item there exists a convolution invertible $H$-comodule map $\gamma: H \to A$ (\textit{cleaving map});
\item the algebra $A$ is isomorphic to a crossed product $A\simeq A^{\co H}\#_{}H$;
\item $A^{\co H}\subset A$ is $H$--Galois and has the normal basis property, that is
  $A$ is isomorphic to $ A^{\co H} \otimes  H$ as left $A^{\co H}$-module and right $H$-comodule
  (where $A^{\co H}  \otimes  H$ is a left $A^{\co H}$-module by left
  multiplication on the
  first factor and an $H$-comodule via $id \otimes \Delta$).
\end{enumerate}

An extension $A^{\co H}\subset A$ is called 
\textit{trivial}  if it admits a cleaving map $H\to A$ which is an $H$-comodule algebra map.
In this case, the inverse of the cleaving map is given by $\bar\gamma= \gamma \circ S$,
for $S$ the antipode of $H$.

A \textit{morphism of $H$-comodule algebras} $\phi:A\rightarrow A'$ is an algebra morphism
which intertwines
the $H$-coactions: $(\phi\otimes id_H)\circ \delta_H = \delta_{H}' \circ \phi$.
Any such $\phi$ maps coinvariants to coinvariants, $\phi(A^{\co H})\subset A'^{\co H}$. Moreover 
it intertwines the canonical maps, that is
$\chi' \circ (\phi\otimes\phi) = (\phi \otimes id_H)\circ \chi$. 

\medskip
We now give the definition of
principal $H$-comodule algebra, 
which represents the local version
of our sheaf theoretic definition of quantum principal bundle,
(see also \cite[\S 2, Def. 2.8]{AFLW}). From now on, we will assume $H$ to have bijective antipode.

\begin{definition}\label{princ-def}
Let $H$ be a Hopf algebra.
An $H$-comodule algebra $A$ is called  \textit{principal} 

if:
\begin{enumerate}
\item $A^{\co H} \subset A$ is $H$-Galois,
\item $A$ is a faithfully flat $A^{\co H}$-module.
\end{enumerate}
\end{definition}
If $A$ is a cleft extension, then $A$ is also principal. 

\medskip
\subsection{Quantum reductions.}
We first give the definition of a quantum reduction for
a quantum principal bundle in the affine setting,
inspired by \cite{gunther} (see also \cite{hajac-red}).

\begin{definition} \label{affine-red}
Let $A$ be a principal $H$-comodule algebra with $B := A^{\co{H}}$ and
$J$ be a Hopf ideal of $H$ such that $H$ is a principal left
$H_0$-comodule 
algebra for $H_0:=H/J$ with left action given by (projection of) the  coproduct.
Let $A_0$ be a principal $H_0$-comodule algebra with $B_0 := A_0^{\co{H_0}}$.

We say that $A_0$ is a \textit{reduction} of $A$ if
\begin{enumerate}
\item $B \cong B_0$ as algebras; 
\item there exists a surjective $H_0$-comodule
morphism, $\phi:A \lra A_0$, $\phi(B)= B_0$, where $A$ carries the induced $H_0$-coaction.
\end{enumerate}
We say that $A_0$ is an \textit{algebraic reduction}
if $\phi$ is an algebra morphism.
\end{definition}

\begin{remark}
Notice that when $\phi$ is a surjective algebra morphism,
that is $A_0$ is an algebraic reduction,
we obtain $A_0\cong A/I$, for some ideal $I$, retrieving the definition
as in \cite{gunther} and \cite{hajac-red}. 
Notice also that, in general, the requirement for the map $\phi: A \to A_0$ to be an algebra map is too restrictive, thus our more general definition of a reduction.  Indeed, in the degenerate case of $J$ and $I$ trivial,   a reduction is just a bundle automorphism and it is known that one has to relax the algebra map condition   in order to account for the bundle morphisms we also observe in the
ordinary setting, see \cite{brz, pgc}.
\end{remark}

\medskip
Definition \ref{affine-red} provides a local model for reduction in the
non affine case, as we shall presently see.
 Let us first recall the notion of quantum principal
bundle as in \cite{afl}.

\begin{definition}\label{qpb-def}
A \textit{quantum ringed space}  
$(M , \cO_M)$ is
a pair consisting of a classical topological space $M$ and 
a sheaf over $M$ of noncommutative algebras.
We say that a sheaf of $H$-comodule algebras $\cF$ 
is a \textit{ quantum principal bundle} over $(M,\cO_M)$
if there exists an open covering 
$\{U_i\}$ of $M$ such that:
\begin{enumerate}
\item $\cF(U_i)^{\co H}=\cO_M(U_i)$,
\item $\cF(U_i )$ is a principal $H$-comodule algebra.
\end{enumerate}
We 
say that $\cF$ is a {\it 
locally cleft quantum principal bundle}
if  $\cF$ is \textit{locally cleft}, 
that is, 
$\cF (U_i )$ is a cleft extension of $\cF(U_i)^{\co H}$ for each open $U_i$ of the covering.

We say the covering $\{U_i\}$ is a {\it local trivialization} for $\cF$
if $\cF (U_i )$ is a trivial extension of $\cF(U_i)^{\co H}$, that
is we have the isomorphism of algebras $\cF(U_i) \cong \cF(U_i)^{\co H} \otimes H$.
\end{definition}

Our definition is in agreement with \cite{AFLW} and \cite{cp},
however more general with respect to \cite{bm} and \cite{buachalla}.
\medskip

Notice that both bundles described in Example \ref{example-red} admit
a local trivialization with respect to the covering  $\{\Ubar_1 , \Ubar_2\}$.

\begin{definition}\label{qred-def}
Let $\cF$ and $\cF_0$ be  quantum principal bundles over the quantum ringed space $(M , \cO_M)$
for Hopf algebras $H$ and $H_0=H/J$, respectively.
We say that $\cF_0$ is a \textit{reduction} (resp. \textit{algebraic reduction})
of $\cF$ if there exists an open covering $\{U_i\}$ of $M$ such that: 
\begin{enumerate}
\item $\cF$ and $\cF_0$ satisfy (1) and (2) of Def. \ref{qpb-def}
with respect to such cover,
\item there exists an $H_0$-comodule (resp. $H_0$-comodule algebra) morphism $\varphi:\cF \lra \cF_0$ such that  
$\varphi(\cF(U_i)^{\co H})=\cF(U_i)^{\co H_0}$
for the induced coaction of $H_0=H \slash J$ on the $\cF(U_i)$.
\end{enumerate}
\end{definition}

Notice that $\cF_0(U_i)$ is a quantum
reduction of $\cF(U_i)$ according to Def. \ref{affine-red}. 

\begin{remark}
Assume $A$ is trivial cleft extension, i.e. $A \cong B\otimes H$ as $H$-comodule algebras,
$B=A^{co(H)}$. Let $H_0=H/J$ as above and assume we can write $H=H_0 \# B$. 
Then $A_0:=B \otimes H_0$ is a reduction. 
\end{remark}

\section{Examples of Reductions}\label{ex-sec}

In this section we provide examples of quantum reductions
starting from the multiparameter family $\cO_\mathbf{q}(\rGL(n))$
of deformations of the algebra
of coordinate functions on  the general linear group $\rGL(n)$
introduced in \cite{Tak, Sud1},
see also \cite{Schirr}.

\subsection{Multiparametric deformations of $\rGL(n)$.}
The coordinate algebra $\cO_\mathbf{q}(M(n))$, deformation of the algebra of coordinate functions
of $n$ by $n$ complex matrices, is generated by elements $\T_{ij}$, $i,j=1, \dots, n$, with commutation relations 
\begin{align} \label{multiGL}
\T_{i k} \T_{i l} = p_{k l} \, \T_{i l}  \T_{i k}  \; ;  \qquad
& \T_{i k} \T_{j k}  = q_{i j}\,\T_{j k}  \T_{i k}   \nn \\ 
 \T_{i l} \T_{j k}  = \frac{q_{i j}}{p_{k l}} \, \T_{j k} \T_{i l}     \; ;  \qquad
& \T_{i k} \T_{j l} = \frac{p_{k l}}{p_{i j}} \, \T_{j l}\T_{i k} + (p_{k l} - \frac{1}{ q_{k l}})\, \T_{i l} \T_{j k}  ~, \qquad i<j , k<l ~.
\end{align}
The relations  depend on the $n(n-1)/2$ non-zero parameters $q_{i j}, i< j$ and an additional parameter $u$ with 
 $p_{i j}:= \frac{u}{q_{i j}}. $
The algebra $\cO_\mathbf{q}(M(n))$ is a bialgebra with the standard matrix coalgebra structure:
$$\Delta (\T_{i j})= \sum_k \T_{i k} \otimes \T_{k j} \quad , \quad \varepsilon(\T_{i j}) =\delta_{i j}\, .$$
The element
\beq\label{det-multi}
D_\mathbf{q}:= 
\sum_{\sigma \in P_n} \epsilon^{\sigma_1 \cdots \sigma_n}\T_{1 \sigma_1}\cdots \T_{n \sigma_n}= 
  \sum_{\sigma \in P_n} \epsilon_{\sigma_1 \cdots \sigma_n}\T_{\sigma_1 1}\cdots \T_{\sigma_n n}
\eeq
is called the \textit{quantum determinant}, where
$$
\epsilon^{\sigma_1 \cdots \sigma_n}:=  \prod_{\alpha < \beta \, ; \,   \sigma_\beta < \sigma_\alpha }  (-p_{\sigma_\beta \sigma_\alpha})
\; , \quad
\epsilon_{\sigma_1 \cdots \sigma_n}:=  \prod_{\alpha < \beta \, ; \,   \sigma_\beta < \sigma_\alpha }  (-q_{\sigma_\beta \sigma_\alpha})
$$
and $\epsilon^{\sigma_1 \cdots \sigma_n}=1$,  $\epsilon_{\sigma_1 \cdots \sigma_n}=1$, when no $p$, respectively $q$, appear in the product.  
In general $D_\mathbf{q}$ is not central, nevertheless it obeys simple commutation
relations with the algebra generators:
\beq\label{comm-multi-det}
\T_{i k} D_\mathbf{q} = \frac{\prod\limits_{\alpha=1}^{k-1} q_{\alpha k}}{\prod\limits_{\beta=1}^{i-1} q_{\beta i}} \frac{\prod\limits_{\gamma=k+1}^{n} p_{k \gamma}}{\prod\limits_{\delta=i+1}^{n}p_{ i \delta}} D_\mathbf{q} \T_{i k} .
\eeq
The algebra $\cO_\mathbf{q}(\rGL(n))$ of the quantum general linear group   is then introduced as the algebra extension 
of $\cO_\mathbf{q}(M(n))$ by the element $D_\mathbf{q}^{-1}$, the inverse of the determinant,  with 
 commutation relations deduced  from those of  $D_\mathbf{q}$ in \eqref{comm-multi-det} imposing compatibilty with $D_\mathbf{q}^{-1} D_\mathbf{q}= D_\mathbf{q} D_\mathbf{q}^{-1}=1$.
 The quantum determinant and its inverse are
 group-like,  
 $$
 \Delta (D_\mathbf{q}^{\pm 1} ) =D_\mathbf{q}^{\pm 1}  \otimes D_\mathbf{q}^{\pm 1}  \quad  ,\quad  \varepsilon (D_\mathbf{q}^{\pm 1} )=1  .
 $$
The bialgebra $\cO_\mathbf{q}(\rGL(n))$ becomes a Hopf algebra with antipode $S$ defined in terms of quantum cofactors, see e.g. \cite{Sud1}.

\begin{example} 
In the lowest dimensional case $n=2$, the algebra $\cO_\mathbf{q}(\rGL(2))$  is generated by elements $a:=\T_{11}$, $b:=\T_{12}$, $c:=\T_{21}$, $d:=\T_{22}$ and $D_\mathbf{q}^{-1}$ that satisfy 
\begin{align} \label{multi2}
& a b = p \, b a \; ,   
& a c  = q \, c a  \; , \quad
&& c d = p \, d c \nn \\
& b d = q \, d b  \; , 
& b c  = \frac{q}{p} \, c b    \; , \quad 
&& a d =   d a + (p - \frac{1}{ q})\, b c \, ,
\end{align}
for $p=u q^{-1}$, with $a$ and $d$ commuting 
 with the quantum determinant $ D_\mathbf{q} = ad - p \, bc$ and its inverse  $D_\mathbf{q}^{-1}$ and
$$
b D_\mathbf{q}^{\pm 1}  = \left(\frac{q}{p}\right)^{\pm 1}  D_\mathbf{q}^{\pm 1}  b \; , \qquad
c D_\mathbf{q}^{\pm 1}  = \left(\frac{q}{p} \right)^{\mp 1}  D_\mathbf{q}^{\pm 1} c \; . 
$$
The antipode is the anti-algebra map given in matrix form as
$$
S \begin{pmatrix}
a & b \\ c & d 
\end{pmatrix}
=
\begin{pmatrix}
d & - \frac{1}{p} b \\ - p c & a 
\end{pmatrix} D_\mathbf{q}^{-1}
= D_\mathbf{q}^{-1}
\begin{pmatrix}
d & - \frac{1}{q} b \\ - q c & a 
\end{pmatrix}
$$
and $S(D_\mathbf{q}^{\pm 1})= D_\mathbf{q}^{\mp 1}$.
\end{example}

\subsection{The Takhtajan-Sudbery algebra.}
We now address our attention to the  one-parameter  deformation corresponding to the choice  $u=1$ and $q_{ij}=q  \in \mathbb{C} \slash \{0\} $ for all $i<j$ (and hence $p_{ij}=q^{-1}$  for all $i<j$).  We refer to this algebra as the Takhtajan-Sudbery algebra.  
We observe that, for this choice of parameters, the determinant is  not central (and thus the quotient to quantum special linear groups is not defined),  while $\T_{i k} \T_{j l}=\T_{j l}\T_{i k} $ hold,
for all $i<j , k<l$. In the following 
we simply write $D^{\pm 1}$  for the quantum determinant and its inverse.
 We denote this Hopf algebra by $\widetilde{\cO}_q(\rGL(n))$, to distinguish it from the `standard'  quantum group $\cO_q(\rGL(n))$ of FRT bialgebras,
which corresponds to the choice  $u=q^2$ and $q_{ij}=q$, for all $i<j$.
For this choice of the parameters, indeed, the quantum determinant is central and one can construct the  quotient Hopf algebra  $\cO_q(\rSL(n)):=\cO_q(\rGL(n))/(D-1)$ whose prototype is the quantum group $\cO_q(\rSL(2))$ \cite{manin}.  
\medskip

The algebra
$\widetilde{\cO}_q(\rGL(n))$ is generated by elements $\T_{ij}$, $i,j=1, \dots, n$, with commutation relations given in \eqref{multiGL} (for $q_{ij}=q$ and $p_{ij}=q^{-1}$  for all $i<j$):
\beq\label{multiGL-one}
\T_{i k} \T_{i l} = q^{-1} \, \T_{i l}  \T_{i k}  \, ;  \quad
 \T_{i k} \T_{j k}  = q \,\T_{j k}  \T_{i k}   \, ;  \quad
 \T_{i l} \T_{j k}  = q^2\, \T_{j k} \T_{i l}     \, ;  \quad
 \T_{i k} \T_{j l} =   \T_{j l}\T_{i k} \, , \; i<j , k<l
\eeq
together with $D^{-1}$, the inverse of the quantum determinant $D$,
with commutation relations
\beq\label{comm-det-one}
\T_{i k} D =  q^{2(k-i)} D  \T_{i k} \; , \quad 
\T_{i k} D^{- 1} =  q^{-2(k-i)} D^{-1} \T_{i k} ~ .
\eeq
Observe that we can rewrite the above commutation relations  in a condensed form as  
\beq\label{commGL-mu}
\T_{i k} \T_{j l} = \mu_{ij} \mu_{lk} \, \T_{j l}  \T_{i k}  \, , \quad \qquad
\T_{i k} D^{\pm 1} = \prod\limits_{\alpha=1}^{n} (\mu_{\alpha k} \mu_{ i\alpha})^{\pm 1} D^{\pm 1} \T_{i k} \qquad
\forall ~  i,j,k,l 
\eeq
with
\beq\label{mu-coeff}
\mu_{ij}:= 
\left\{ 
\begin{array}{ll}
q & \mbox { if } i<j 
\\
1 & \mbox { if } i=j 
\\
q^{-1} & \mbox { if } i>j ~ .
\end{array}
\right. 
\eeq
By definition, $\mu_{i j} \mu_{j i}=1$, for all $i,j=1, \dots , n$ and moreover for each index $\ell$ fixed
\beq\label{prop2}
\mu_{i \ell} \mu_{\ell j}=1 \; \mbox{ if both } i,j <\ell \mbox{ or   } i,j >\ell .
\eeq
\medskip

\subsection{The Quantum ringed space $(\bP^{n-1}, \cO_{\bP^n})$.}
In \cite{cfg}, through the concept of {\sl quantum section}, a construction of a quantum
deformation of the homogeneous coordinate ring $\cO_q(G/P)$ of the projective variety  
$G/P$ appears, for $G$ a complex algebraic group and $P$ a parabolic subgroup. The quantum section
is the quantum equivalent of the character of $P$ encoding the information on the projective embedding of $G/P$,
via the line bundle on $G/P$ defined through the given character.
Such embedding is giving the homogeneous commutative ring $\cO(G/P)$, recovered
in the space of the global sections of the line bundle.
For more details 
see \cite{cfg, afl}.

\medskip
We recall the key result of \cite{cfg} that we need here.
Let $\cO_q(G)$ be a Hopf algebra, with comultiplication $\Delta$ and counit $\epsilon$,
which is a quantum deformation  of  the coordinate ring $\cO(G)$ of the algebraic group $G$.

\begin{theorem}  \label{Oqgh-graded}
Let  $ d \in \cO_q(G)$  be a quantum section i.e.
$(id \otimes \pi) \Delta(d) = d \otimes \pi(d)$. Then
\begin{enumerate}
\item $ \cO_q(G/P)={\oplus}_{n \in \N} \cO_q(G/P)_n $  is a  graded subalgebra  of
$ \, \cO_q(G) \,$, where 
$$
\cO_q(G/P)_n = \{f \in \cO_q(G)\, | \, 
(\text{\it id} \otimes \pi)\Delta(f) = f \otimes \pi\big(d^n\big)\}
$$
for $\pi: \cO_q(G) \lra \cO_q(P):=\cO_q(G)/I_P$, $\cO_q(P)$ a quantum group deformation of  the coordinate ring of
the parabolic subgroup $P\subset G$.

\item 
$ \, \cO_q(G/P) $  is a  {\sl graded}
$ \, \cO_q(G) $--comodule  algebra, via
the restriction of the comultiplication $\Delta$
in $\cO_q(G)$, 
$$
\Delta|_{\cO_q(G/P)}: \cO_q(G/P) \lra \cO_q(G) \otimes 
\cO_q(G/P)~.
$$
\end{enumerate}                               
\end{theorem}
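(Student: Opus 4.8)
The plan is to make the map $\rho := (\mathrm{id}\otimes\pi)\Delta \colon \cO_q(G)\lra \cO_q(G)\otimes\cO_q(P)$ the central object. Because $I_P$ is a Hopf ideal, $\pi$ is a surjective Hopf algebra morphism, so $\epsilon_P\pi=\epsilon$ and $\Delta_P\pi=(\pi\otimes\pi)\Delta$; a one line Sweedler computation using coassociativity of $\Delta$ then shows that $\rho$ is coassociative and counital, and being a composite of algebra maps it is an algebra morphism. Thus $\cO_q(G)$ is a right $\cO_q(P)$-comodule algebra and $\cO_q(G/P)_n=\{f:\rho(f)=f\otimes\pi(d^n)\}$ is precisely the isotypic subspace attached to $\pi(d^n)=\pi(d)^n$. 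I would first record $\pi(d^n)\pi(d^m)=\pi(d^{n+m})$ with $\pi(d^0)=1$, and observe that the quantum section identity $\rho(d)=d\otimes\pi(d)$ together with the algebra-map property of $\rho$ gives $\rho(d^n)=d^n\otimes\pi(d^n)$; hence $d^n\in\cO_q(G/P)_n$ and the graded pieces are genuinely present.

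For part (1), multiplicativity of the grading is then immediate: for $f\in\cO_q(G/P)_m$ and $f'\in\cO_q(G/P)_n$ one has $\rho(ff')=\rho(f)\rho(f')=ff'\otimes\pi(d^m)\pi(d^n)=ff'\otimes\pi(d^{m+n})$, so $ff'\in\cO_q(G/P)_{m+n}$; with $1\in\cO_q(G/P)_0$ this makes $\sum_n\cO_q(G/P)_n$ a subalgebra. The delicate point is that the sum is \emph{direct}, i.e.\ that this is a genuine $\N$-grading. The clean route is to prove that $\{\pi(d^n)\}_{n\in\N}$ are linearly independent in $\cO_q(P)$: applying $\pi\otimes\mathrm{id}$ to the section identity gives $\Delta_P(\pi(d))=\pi(d)\otimes\pi(d)$, and applying $\epsilon_P\otimes\mathrm{id}$ to this forces $\epsilon_P(\pi(d))=1$ as soon as $\pi(d)\neq 0$, so $\pi(d)$ is grouplike; its distinct powers are then linearly independent provided $\pi(d)$ has infinite order. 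Given this, any relation $\sum_n f_n=0$ with $f_n\in\cO_q(G/P)_n$ yields $\sum_n f_n\otimes\pi(d^n)=0$ after applying $\rho$, and independence of the second factors forces every $f_n=0$.

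For part (2) the heart of the matter is that $\Delta$ restricts to $\cO_q(G/P)$. For $f\in\cO_q(G/P)_n$ I would slide $\rho$ past $\Delta$ using coassociativity,
\[
(\mathrm{id}\otimes\rho)\Delta(f)=(\Delta\otimes\mathrm{id})\rho(f)=(\Delta\otimes\mathrm{id})\big(f\otimes\pi(d^n)\big)=\Delta(f)\otimes\pi(d^n).
\]
Since tensoring with $\cO_q(G)$ is exact, this identity says exactly that $\Delta(f)\in\cO_q(G)\otimes\cO_q(G/P)_n$; hence $\Delta$ carries each graded component into $\cO_q(G)\otimes\cO_q(G/P)_n$ and restricts to a coaction $\cO_q(G/P)\lra\cO_q(G)\otimes\cO_q(G/P)$ that respects the grading. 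Coassociativity and counitality of this restricted coaction, and its being an algebra map, are inherited verbatim from the corresponding properties of $\Delta$ on $\cO_q(G)$, which yields the graded $\cO_q(G)$-comodule algebra structure.

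I expect the only real obstacle to be the directness of the grading in part (1): all the comodule-algebra bookkeeping is formal, but the linear independence of the powers $\pi(d^n)$ genuinely uses the nature of the section $d$, namely that $\pi(d)$ is a grouplike of infinite order. This mirrors the classical fact that $d$ encodes an ample character whose powers cut out the distinct graded components, and if one prefers to avoid invoking grouplikeness one can instead grade $\cO_q(G)$ by the $d$-weight directly; the grouplike argument is, however, the most economical.
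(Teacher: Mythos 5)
The paper does not actually prove this statement: Theorem \ref{Oqgh-graded} is recalled verbatim from \cite{cfg} ("We recall the key result of \cite{cfg} that we need here"), so there is no in-paper argument to compare yours against, and your proposal has to be judged on its own merits. Judged so, it is correct, and it is the natural argument: making $\rho=(\mathrm{id}\otimes\pi)\Delta$ the central object, verifying it is a counital, coassociative algebra map, identifying $\cO_q(G/P)_n$ as the isotypic component of the grouplike $\pi(d)^n$, getting multiplicativity of the grading from multiplicativity of $\rho$, and deducing part (2) from the interchange identity $(\mathrm{id}\otimes\rho)\Delta=(\Delta\otimes\mathrm{id})\rho$ together with flatness of $\cO_q(G)\otimes(-)$ are all sound steps. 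Your decision to flag directness of the sum as the one non-formal point is also exactly right, and it is worth recording why: with only the identity $(\mathrm{id}\otimes\pi)\Delta(d)=d\otimes\pi(d)$ as stated, directness genuinely can fail --- take $d=1$, which satisfies the identity, and for which every $\cO_q(G/P)_n$ equals the (nonzero) coinvariant subalgebra, so the sum is not direct. The missing input is precisely what you isolate: the powers $\pi(d)^n$ must be pairwise distinct, hence linearly independent as distinct grouplikes, and this does not follow from the displayed hypothesis alone. In the source \cite{cfg} this is supplied by the fuller definition of a quantum section, where $d$ lifts a classical section attached to a very ample line bundle, so that $\pi(d)$ deforms a character of $P$ of infinite order. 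In short: your proof, with the proviso you state explicitly, is a complete and honest treatment, and it correctly locates the one place where the theorem as transcribed in this paper is weaker than what is actually assumed in the cited reference.
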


In our special setting, we take $\cO_q(G)=\widetilde{\cO}_q(\rGL(n))$ and one
can check immediately that $d=a_{11}$ is a quantum section,
for
\beq\label{par-sub}
\pi:\cO_q(G)\lra \cO_q(P):=:\cO_q(G)/(a_{s \, 1},\, s=2, \dots, n) \; .
\eeq
With this
choice, by Thm \ref{Oqgh-graded}, we obtain
$\cO_q(G/P)$ as the graded subalgebra of $\cO_q(G)$ generated by the $a_{\ell \, 1},\, \ell=1, \dots, n$.
In terms of generators and relations, we can express $\cO^q_{\bP^{n-1}(\C)}:=\cO_q(G/P)$ as the
graded non commutative algebra generated by
generic variables $x_i$'s, $i=0, \dots, n-1$, subject to the (homogeneous) relations:
$$
x_ix_j=qx_jx_i, \qquad 0 \leq i<j \leq n-1 \; .
$$

We now construct a sheaf of noncommutative algebras $\cO^q_{\bP^{n-1}(\C)}$. We consider
the classical affine open cover of $\bP^{n-1}(\C)$, defined as:
\beq\label{aperti-U}
U_\ell:=\{[\hat{x}_0, \dots, \hat{x}_{n-1}]\,| \, \hat{x}_\ell \neq 0 \}\subset \bP^{n-1}(\C)
\eeq
where $[\hat{x}_0, \dots, \hat{x}_{n-1}]$ represents a point in $\bP^{n-1}(\C)$.
We then consider the topology generated by the $U_\ell$'s; the intersections
of an arbitrary number of $U_\ell$'s  
form a basis.  In analogy with the classical setting, we define $\cO^q_{\bP^{n-1}(\C)}(U_\ell)$ as
the non commutative {\sl projective localization} of $\cO^q_{\bP^{n-1}(\C)}$ at the multiplicatively
closed set $S_\ell=\{x_\ell^n, n \in \mathbb{N}\}$:
$$
\cO^q_{\bP^{n-1}(\C)}(U_\ell):=[\cO^q_{\bP^{n-1}(\C)}S_\ell^{-1}]_{\mathrm{proj}} \, .
$$
This construction amounts to take the localization at
$S_\ell$ and then the subalgebra of degree zero elements. This construction is standard, see \cite[Ch. 4]{fl}
 for more details. We leave to the reader the checks; notice that all elements of $S_\ell$ are Ore.
We have the following result, that we shall use later.

\begin{proposition}
Let the notation be as above. Then,
\begin{enumerate}
\item we can express $\cO^q_{\bP^{n-1}(\C)}(U_\ell)$ in terms of generators
and relations as:
\beq\label{sheaf-pnc}
\cO^q_{\bP^{n-1}(\C)}(U_\ell) = \C[x^{\scriptscriptstyle{(\ell)}}_i]_{i =1, \dots , n; ~i \neq \ell}/(x^{\scriptscriptstyle{(\ell)}}_i x^{\scriptscriptstyle{(\ell)}}_j - \mu_{\ell i} \mu_{i j} \mu_{j \ell}  ~ x^{\scriptscriptstyle{(\ell)}}_j x^{\scriptscriptstyle{(\ell)}}_i ) 
\eeq
\item for each $\ell=1, \dots, n$, the  algebra $\cO^q_{\bP^{n-1}(\C)}(U_\ell)$
is isomorphic to the algebra \\ $\cO^q_{\bP^{n-1}(\C)}(U_1)$ via the map
$$
\cO^q_{\bP^{n-1}(\C)}(U_1) \to \cO^q_{\bP^{n-1}(\C)}(U_\ell) \, ,\quad  x^{\scriptscriptstyle{(1)}}_j \mapsto \left\{ 
\begin{array}{ll}
 x^{\scriptscriptstyle{(\ell)}}_{j+ \ell -1} & \mbox{if }~ j=2, \dots , n-\ell+1 ~;
 \\
 \\
x^{\scriptscriptstyle{(\ell)}}_{j+\ell -1-n} & \mbox{if } ~ j=n-\ell+2, \dots , n ~.
\end{array}
\right. 
$$
\end{enumerate}
\end{proposition}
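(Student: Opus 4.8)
The plan is to deduce both statements from the fact that each $\cO^q_{\bP^{n-1}(\C)}(U_\ell)$ is again a quantum affine space, so that the only genuine content is two bookkeeping computations with the structure constants $\mu_{ij}$ of \eqref{mu-coeff}.

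For part (1), I would first introduce the degree-zero generators $x_i^{(\ell)} := x_i\, x_\ell^{-1}$ for $i\neq\ell$ inside the Ore localization $[\cO^q_{\bP^{n-1}(\C)}S_\ell^{-1}]$ (the Ore property of $S_\ell$ being granted in the text). From the defining relation $x_\ell x_i = \mu_{\ell i}\, x_i x_\ell$ in \eqref{commGL-mu}, conjugating by $x_\ell^{-1}$ and using $\mu_{\ell i}\mu_{i\ell}=1$ gives $x_\ell^{-1}x_i = \mu_{i\ell}\, x_i x_\ell^{-1}$. Substituting this into $x_i^{(\ell)}x_j^{(\ell)} = x_i x_\ell^{-1} x_j x_\ell^{-1}$ yields $x_i^{(\ell)}x_j^{(\ell)} = \mu_{j\ell}\mu_{ij}\, x_j x_i x_\ell^{-2}$, while $x_j^{(\ell)}x_i^{(\ell)} = \mu_{i\ell}\, x_j x_i x_\ell^{-2}$; comparing the two and using $\mu_{i\ell}^{-1}=\mu_{\ell i}$ produces exactly the relation $x_i^{(\ell)}x_j^{(\ell)} = \mu_{\ell i}\mu_{ij}\mu_{j\ell}\, x_j^{(\ell)}x_i^{(\ell)}$ of \eqref{sheaf-pnc}.

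I would then upgrade this relation check to an isomorphism. The assignment $x_i^{(\ell)}\mapsto x_i x_\ell^{-1}$ extends to an algebra map from the abstract quantum affine space $\C[x_i^{(\ell)}]_{i\neq\ell}/(\text{relations})$ to the degree-zero subalgebra, precisely because the relations are satisfied. Surjectivity is immediate: every homogeneous degree-zero element of the localization is a sum of terms $x_{i_1}\cdots x_{i_m}\, x_\ell^{-m}$, and inserting $x_\ell^{-1}x_\ell=1$ between consecutive factors rewrites each such term as a product of the $x_i^{(\ell)}$. For injectivity I would compare PBW bases: the abstract algebra has the ordered-monomial basis supplied by the diamond lemma applied to its $q$-commutation relations, while the localized quantum affine space is itself a quantum affine space (in $x_1,\dots,x_\ell^{\pm 1},\dots,x_n$) whose ordered monomials of total degree zero form a basis of the degree-zero part; the map carries basis to basis. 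This completeness-of-relations step, confirming that no extra relations hold among the $x_i^{(\ell)}$, is the one place I expect to need care.

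For part (2), both $\cO^q_{\bP^{n-1}(\C)}(U_1)$ and $\cO^q_{\bP^{n-1}(\C)}(U_\ell)$ are quantum affine spaces by part (1), so I would define $\psi$ by the stated relabeling $\sigma$, observing that $\sigma$ is the rotation $k\mapsto k+\ell-1 \pmod n$ on $\{1,\dots,n\}$, which carries the excluded index $1$ to $\ell$ and bijects $\{2,\dots,n\}$ onto $\{1,\dots,n\}\setminus\{\ell\}$. It then suffices to show $\psi$ respects relations, i.e. that the structure constant $c^{(1)}_{ij}:=\mu_{1i}\mu_{ij}\mu_{j1}$ equals $c^{(\ell)}_{\sigma(i)\sigma(j)}:=\mu_{\ell\sigma(i)}\mu_{\sigma(i)\sigma(j)}\mu_{\sigma(j)\ell}$. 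Writing $\mu_{ab}=q^{\mathrm{sgn}(b-a)}$, each constant is $q^{E}$ with $E$ a sum of three signs, and a short case analysis over the relative orders of the triple shows $E(\ell;a,b)=+1$ exactly when $(\ell,a,b)$ is positively oriented in the cyclic order on $\mathbb{Z}/n$ and $-1$ otherwise; thus $c^{(\ell)}_{ab}$ depends only on this cyclic orientation. Since the rotation $\sigma$ preserves cyclic orientation and sends the triple $(1,i,j)$ to $(\ell,\sigma(i),\sigma(j))$, we get $c^{(1)}_{ij}=c^{(\ell)}_{\sigma(i)\sigma(j)}$, so $\psi$ is a well-defined algebra homomorphism; applying the same argument to $\sigma^{-1}$ produces a two-sided inverse and hence an isomorphism. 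The only pitfall here is the index wraparound in $\sigma$, which the cyclic-orientation reformulation handles uniformly and avoids splitting into wrapping and non-wrapping cases by hand.
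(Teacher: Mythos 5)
Your proposal is correct, and its skeleton matches the paper's: part (1) is proved by exhibiting the generators $x_i^{(\ell)} = x_i x_\ell^{-1}$ of the degree-zero localization and checking they satisfy the stated $\mu$-commutation relations, and part (2) by verifying that the index rotation respects the structure constants. The differences are in execution, and both are in your favor. For (1), the paper's proof is a one-liner ("the $x_j^{(\ell)}$ generate and satisfy the relations"), which only establishes a surjection from the abstract quadratic algebra onto $\cO^q_{\bP^{n-1}(\C)}(U_\ell)$; the completeness of the presentation (no further relations) is left implicit. Your PBW/diamond-lemma comparison — ordered monomials of the abstract algebra mapping, up to nonzero scalars, bijectively onto the degree-zero ordered monomials of the localized quantum affine space — supplies exactly the missing injectivity step, and you correctly flag it as the delicate point. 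For (2), the paper defers to the proof of Proposition \ref{prop:b1=bl}, which splits into three cases according to whether the indices $j,k$ lie in the non-wrapping range, the wrapping range, or one in each, and invokes \eqref{prop2} and \eqref{mu-coeff} case by case. Your observation that $c^{(\ell)}_{ab}=\mu_{\ell a}\mu_{ab}\mu_{b\ell}=q^{\pm 1}$ according to the cyclic orientation of the triple $(\ell,a,b)$ in $\mathbb{Z}/n$, together with the fact that the rotation $\sigma(k)=k+\ell-1 \pmod n$ preserves cyclic orientation, proves the same identity uniformly and explains conceptually why the relabeling works; the paper's case analysis buys nothing over this except self-containedness in elementary inequalities. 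Both computations you perform (the conjugation $x_\ell^{-1}x_i=\mu_{i\ell}x_ix_\ell^{-1}$ and the six-case check hidden in the orientation claim) are correct.
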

\begin{proof}
(1). We notice that $x^{\scriptscriptstyle{(\ell)}}_j := x_j x_\ell^{-1}$ generate $\cO^q_{\bP^{n-1}(\C)}(U_\ell)$
and satisfy the given commutation relations.
\medskip
(2). The proof is completely analogous to that of Proposition \ref{prop:b1=bl} and thus we omit it.
\end{proof}

Let us denote by $AS^{-1}$ the localization of the algebra $A$ at the multiplicatively
closed subset $S$ (of Ore elements).

\begin{proposition}
Let the notation be as above.
The assignment
$$
U_{i_1} \cap \dots \cap U_{i_r} \mapsto \cO^q_{\bP^{n-1}(\C)}(U_\ell)S_{i_1}^{-1} \dots S_{i_r}^{-1} 
$$
defines a sheaf of non commutative algebras $\cO^q_{\bP^{n-1}(\C)}$ on $\bP^{n-1}$, with respect
to the topology defined by the open cover $\{U_\ell\}$.
Hence $(\bP^{n-1}(\C),\cO^q_{\bP^{n-1}(\C)})$ is a quantum ringed space.
\end{proposition}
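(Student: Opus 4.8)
The plan is to exhibit $\cO^q_{\bP^{n-1}(\C)}$ as the unique sheaf extending a presheaf defined on a basis of the topology, following the standard recipe recalled in \cite[Ch.~4]{fl}. First I would observe that the finite intersections $U_{i_1}\cap\dots\cap U_{i_r}$ form a basis $\mathcal{B}$ which is closed under finite intersection, since the intersection of two such sets is again a finite intersection of $U_\ell$'s and depends only on the union of the index sets. Next I would check that the assignment is a well-defined presheaf on $\mathcal{B}$. The crucial input here is that each $S_\ell$ consists of Ore elements, and that after localizing at $S_{i_1},\dots,S_{i_k}$ the image of $S_{i_{k+1}}$ is still an Ore set; this makes the iterated localization well defined, independent of the order of localization and of repeated indices. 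Concretely the value on $U_{i_1}\cap\dots\cap U_{i_r}$ coincides with the degree-zero part of $\cO^q_{\bP^{n-1}(\C)}$ localized at the multiplicative set generated by $x_{i_1},\dots,x_{i_r}$, an expression manifestly symmetric in the indices, so that it depends only on the open set $U_{i_1}\cap\dots\cap U_{i_r}$ and not on the chosen starting chart.

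For an inclusion $V\subseteq W$ of basic opens the restriction map $\cF(W)\to\cF(V)$ is taken to be the canonical localization homomorphism (further localization at the extra Ore sets), and functoriality, namely compatibility with composition and the identity on $\cF(W)\to\cF(W)$, follows immediately from the universal property of Ore localization. The crux of the argument is then the sheaf (equalizer) condition on the basis: for every basic open $W$ and every covering $\{V_\alpha\subseteq W\}$ by basic opens the sequence $\cF(W)\to\prod_\alpha \cF(V_\alpha)\rightrightarrows\prod_{\alpha,\beta}\cF(V_\alpha\cap V_\beta)$ must be exact.

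This equalizer condition is the step I expect to be the main obstacle, precisely because we work with noncommutative algebras and cannot invoke Serre's classical construction of $\cO_{\bP^{n-1}(\C)}$ verbatim. The strategy I would use to overcome it is to exploit the PBW-type ordered monomial basis of $\cO^q_{\bP^{n-1}(\C)}$ provided by the $q$-commutation relations $x_ix_j=qx_jx_i$: as a graded vector space the algebra and all its projective localizations are isomorphic to their commutative counterparts, and the restriction maps, being grading-preserving and acting on ordered monomials exactly as in the commutative case, carry no extra noncommutative data. Since the exactness of the \v{C}ech equalizer is a statement about kernels and images of degree-preserving linear maps, it can be checked degree by degree at the level of vector spaces, where it reduces to the classical exactness for the structure sheaf of $\bP^{n-1}(\C)$. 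Having thus verified the sheaf axioms on $\mathcal{B}$, I would invoke the standard theorem that a sheaf on a basis closed under intersection extends uniquely to a sheaf on all open sets, obtaining $\cO^q_{\bP^{n-1}(\C)}$. As $\bP^{n-1}(\C)$ is a classical topological space and $\cO^q_{\bP^{n-1}(\C)}$ a sheaf of noncommutative algebras, the pair $(\bP^{n-1}(\C),\cO^q_{\bP^{n-1}(\C)})$ is a quantum ringed space in the sense of Definition~\ref{qpb-def}.
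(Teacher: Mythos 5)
Your proposal is correct and follows essentially the same route as the paper's (deliberately terse) proof: the paper likewise defines the presheaf on the basis of finite intersections with restriction maps given by Ore localization, and then simply ``leaves to the reader all remaining checks,'' pointing to \cite[Observation 4.4]{AFLW}. The one substantive item you add is a concrete way to perform those omitted checks — namely verifying the \v{C}ech equalizer condition by transporting everything to the commutative case via the ordered-monomial (PBW) basis, on which the restriction maps act exactly as classically — and this argument is sound, since the sheaf axiom is a purely linear-algebraic statement about degree-preserving maps.
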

\begin{proof} The $U_{i_1} \cap \dots \cap U_{i_r}$ form a basis for the topology,
restriction maps are naturally defined via localizations. We leave to the reader
all remaining checks.
See \cite[Observation 4.4]{AFLW}.
\end{proof}

\subsection{The Quantum Principal bundle $\cF$.}
We now construct a quantum principal bundle $\cF$,
which is a deformation of the principal bundle $\rGL(n) \lra \rGL(n)/P
\cong \bP^{n-1}(\C)$. 
Classically, it is the same construction as in \cite{afl}.
However, since here the deformation is substantially different,
we need to take care of the technicalities involved in
verifying that we indeed have a quantum principal bundle.
We shall make an essential use of 
\cite[Thm 4.8]{afl}. Later on, in \S \ref{sec:qrF}, we will construct
  a quantum reduction of $\cF$.

\medskip
Let $A=\widetilde{\cO}_q(\rGL(n))$.
For each $\ell=1, \dots, n$, consider the algebra extension $A_\ell:= A[\T_{\ell 1}^{-1}]$ of $A$ by the inverse of $\T_{\ell 1}$,  
$\T_{\ell 1}^{-1} \T_{\ell 1} = 1 = \T_{\ell 1} \T_{\ell 1}^{-1}$.
In view of our application of  \cite[Thm 4.8]{afl},
  the $\T_{\ell 1}$ here are the $d_\ell$'s there, while the quantum section is 
  $a_{11} \in A$.

For consistency, the additional generator satisfies
commutation relations
\beq\label{comm-rel-a-ainv}
\T_{j k} \T_{\ell 1}^{-1} =\mu_{k 1 }  \mu_{ \ell j}   \, \T_{\ell 1}^{-1}   \T_{j k}  \, , \qquad  \;  j,k =1, \dots, n
.
\eeq

\begin{proposition}\label{prop:iso}
Let $\ell$, $m$ two fixed indices in $\{1, \dots , n\}$ and 
  $A_\ell $ and $A_m$ the corresponding algebra extensions. Then, the map  (change of coordinates)
\begin{align}\label{psi-lm}
\Psi_{\ell m}:
A_{\ell}[\T_{m 1}^{-1}]  & \stackrel{\simeq}{\longrightarrow} A_{m}[\T_{\ell 1}^{-1}]
\\ \nn
\T_{\ell k} & \mapsto \; \T_{\ell 1} \T_{m1}^{-1} \T_{mk} 
\\ \nn
\T_{\ell 1}^{-1} & \mapsto  \; \T_{\ell 1}^{-1}
\\ \nn
\T_{mk} & \mapsto \; \T_{m 1} \T_{\ell  1}^{-1} \T_{\ell  k} 
\\ \nn
\T_{m 1}^{-1} & \mapsto \; \T_{m 1}^{-1}
\\ \nn
\T_{j k} & \mapsto \; \T_{j k}  \; , \qquad \forall ~ j,k=1, \dots, n , ~ j \neq \ell , m
\\ \nn
D^{-1} & \mapsto \; D^{-1}
\end{align}
is an algebra isomorphism. 
\end{proposition}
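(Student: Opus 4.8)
The plan is as follows. First observe that the source $A_{\ell}[\T_{m1}^{-1}]$ and the target $A_{m}[\T_{\ell 1}^{-1}]$ are, as algebras, one and the same iterated Ore localization $A[\T_{\ell 1}^{-1},\T_{m1}^{-1}]$ of $A=\widetilde{\cO}_q(\rGL(n))$; the content of the proposition is therefore that the \emph{specific}, non-identity assignment \eqref{psi-lm} is compatible with the relations and so extends to an algebra endomorphism which is moreover bijective. Since this localization is presented by the generators $\T_{jk}$, $\T_{\ell 1}^{-1}$, $\T_{m1}^{-1}$, $D^{-1}$ subject to \eqref{commGL-mu}, \eqref{comm-rel-a-ainv} and the inverse-element relations, I would (i) check that the images under $\Psi_{\ell m}$ of the generators satisfy every one of these relations, so that $\Psi_{\ell m}$ is a well-defined algebra homomorphism, and then (ii) produce a two-sided inverse.

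Before checking relations I would record two simplifications. Setting $t:=\T_{\ell 1}\T_{m1}^{-1}$, an invertible element of the localization with $t^{-1}=\T_{m1}\T_{\ell 1}^{-1}$, the nontrivial part of \eqref{psi-lm} reads $\Psi_{\ell m}(\T_{\ell k})=t\,\T_{mk}$ and $\Psi_{\ell m}(\T_{mk})=t^{-1}\T_{\ell k}$, while $\T_{jk}$ ($j\neq \ell,m$), $\T_{\ell 1}^{-1}$, $\T_{m1}^{-1}$ and $D^{-1}$ are fixed. Taking $k=1$ gives $\Psi_{\ell m}(\T_{\ell 1})=t\,\T_{m1}=\T_{\ell 1}$ and $\Psi_{\ell m}(\T_{m1})=t^{-1}\T_{\ell 1}=\T_{m1}$, so $\T_{\ell 1}$ and $\T_{m1}$ are fixed too; consequently the inverse-element relations $\T_{\ell 1}\T_{\ell 1}^{-1}=1=\T_{m1}\T_{m1}^{-1}$ are automatically preserved. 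The second simplification is that $t$ quasi-commutes with each generator by a scalar: using \eqref{commGL-mu} and \eqref{comm-rel-a-ainv} together with the identity $\mu_{ab}\mu_{ba}=1$ noted after \eqref{mu-coeff}, one obtains relations of the shape $t\,\T_{jk}=\lambda\,\T_{jk}\,t$ with $\lambda$ an explicit power of $q$ (for instance $t\,\T_{mk}=\mu_{\ell m}\,\T_{mk}\,t$, the factor being independent of $k$ precisely because $\mu_{1k}\mu_{k1}=1$). This reduces every relation-check to bookkeeping of $q$-powers.

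With these in hand I would verify the relations case by case, grouped according to the rows involved: (a) two generators both in row $\ell$, or both in row $m$; (b) one in row $\ell$ and one in row $m$; (c) one in row $\ell$ or $m$ and one in a third row $j\neq \ell,m$; (d) a generator of row $\ell$ or $m$ against $\T_{\ell 1}^{-1}$ or $\T_{m1}^{-1}$; and (e) a generator against $D^{\pm 1}$. In each case one replaces $\Psi_{\ell m}(\T_{\ell k})$ by $t\,\T_{mk}$ (and likewise for row $m$), slides the $t$'s to one side by the quasi-commutation rule, and checks that the accumulated $q$-powers collapse, via $\mu_{ab}\mu_{ba}=1$, to exactly the coefficient prescribed by the source relation. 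For example, in case (b) one finds $\Psi_{\ell m}(\T_{\ell k})\Psi_{\ell m}(\T_{mk'})=\mu_{k'k}\,\T_{\ell k'}\T_{mk}$ while $\Psi_{\ell m}(\T_{mk'})\Psi_{\ell m}(\T_{\ell k})=\mu_{m\ell}\,\T_{\ell k'}\T_{mk}$, whose ratio $\mu_{\ell m}\mu_{k'k}$ is precisely the coefficient appearing in the source relation $\T_{\ell k}\T_{mk'}=\mu_{\ell m}\mu_{k'k}\T_{mk'}\T_{\ell k}$.

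Finally, for bijectivity I would exhibit a two-sided inverse. Interchanging $\ell$ and $m$ in \eqref{psi-lm} yields, by the same verification, an algebra homomorphism $\Psi_{m\ell}:A_{m}[\T_{\ell 1}^{-1}]\to A_{\ell}[\T_{m1}^{-1}]$, and a direct substitution on generators gives $\Psi_{m\ell}(\Psi_{\ell m}(\T_{\ell k}))=\Psi_{m\ell}(\T_{\ell 1}\T_{m1}^{-1}\T_{mk})=\T_{\ell 1}\T_{m1}^{-1}\cdot\T_{m1}\T_{\ell 1}^{-1}\T_{\ell k}=\T_{\ell k}$, and symmetrically on $\T_{mk}$, all remaining generators being fixed; hence $\Psi_{m\ell}\circ\Psi_{\ell m}=\mathrm{id}$ and likewise $\Psi_{\ell m}\circ\Psi_{m\ell}=\mathrm{id}$ (indeed the two assignments coincide, so $\Psi_{\ell m}$ is an involution). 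I expect the only real obstacle to be the $q$-bookkeeping, in particular the determinant case (e), where the coefficients are products $\prod_{\alpha}(\mu_{\alpha k}\mu_{i\alpha})^{\pm 1}$ over all $\alpha$ and one must check that the three such products arising from $\T_{\ell 1}$, $\T_{m1}^{-1}$ and $\T_{mk}$ telescope—through the cancellations $\mu_{m\alpha}^{-1}\mu_{m\alpha}=1$ and $\mu_{\alpha 1}\mu_{\alpha 1}^{-1}=1$—to the single product prescribed for $\T_{\ell k}$.
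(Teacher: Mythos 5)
Your proposal is correct and takes essentially the same route as the paper: the paper's proof is exactly a direct computation verifying that the assignment respects the commutation relations \eqref{commGL-mu} and \eqref{comm-rel-a-ainv} via the identity $\mu_{ij}\mu_{ji}=1$, which is what you carry out (your additional organizing devices --- identifying source and target with the common localization, the quasi-commuting element $t=\T_{\ell 1}\T_{m1}^{-1}$, and exhibiting $\Psi_{m\ell}$ as the explicit inverse --- are refinements of, not departures from, that computation). The representative cases you work out, including the determinant bookkeeping, check out.
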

\begin{proof}
The proof is by direct computation, using the commutation relations given in  concise forms in   \eqref{commGL-mu} and \eqref{comm-rel-a-ainv} and the property $\mu_{i j} \mu_{j i}=1$, for all $i,j=1, \dots , n$.
\end{proof}
In matrix form,
in  the algebra  $A_\ell$
we write:
$$
(\T_{ij}) =  \begin{pmatrix}
\T_{11} & \gu{\ell}_{12} & \dots & \gu{\ell}_{1n}
\\
\T_{21} & \gu{\ell}_{22} & \dots & \gu{\ell}_{2n}
\\
\vdots
\\
\T_{\ell 1} & 0 & \dots & 0
\\
\vdots
\\
\T_{n1} & \gu{\ell}_{n2} & \dots & \gu{\ell}_{nn} 
\end{pmatrix}
\begin{pmatrix}
1 & \gn{\ell}_{12} & \gn{\ell}_{13} &\dots & \gn{\ell}_{1n}
\\
0 & 1& 0& \dots & 0
\\
&& \ddots &&
\\
0& 0& \dots &1 & 0
\\
0& 0& \dots &0 & 1
\end{pmatrix}
$$
that is
\beq
\gn{\ell}_{1j}:= \T_{\ell 1}^{-1} \T_{\ell j} 
\; , \qquad
\gu{\ell}_{ij}:= \T_{i j}- \T_{i 1} \T_{\ell 1}^{-1} \T_{\ell j}  ~, \;~~j>1~ i \neq \ell ~.
\eeq

\begin{lemma}
For $\ell \in \{1, \dots , n\}$ fixed,   
\beq\label{Aell}
A_\ell = \C_q[\T_{k1}, \T_{\ell 1}^{ - 1}, \gn{\ell}_{1s}, \gu{\ell}_{js}, D^{\pm 1}]_{j,k=1, \dots , n , ~ j \neq \ell, ~ s= 2, \dots , n} \slash I_\ell
\eeq
 for $I_\ell$ the ideal giving commutation relations
 \begin{align}\label{comm-rel-nu}
 &\T_{k 1} \T_{j1} =   \mu_{kj} \T_{j1}  \T_{k 1} 
  &&\T_{k 1} \T_{ \ell 1}^{ -1} =   \mu_{\ell k} \T_{ \ell 1}^{ -1}  \T_{k 1} 
 \nn \\
&\T_{k 1} \gn{\ell}_{1s} =   \mu_{s1} \gn{\ell}_{1s}  \T_{k 1} 
&& \T_{k 1} \gu{\ell}_{is} =  \mu_{s1} \mu_{ki}  \gu{\ell}_{is} \T_{k 1} 
\nn \\
&\T_{\ell 1}^{ - 1} \gn{\ell}_{1s} =  \mu_{1s} ~ \gn{\ell}_{1s} \T_{\ell 1}^{-1} ;
&& \T_{\ell 1}^{ - 1} \gu{\ell}_{is} =    \mu_{1s}  \mu_{i \ell } \gu{\ell}_{i s}\T_{\ell 1}^{-1} ;
\nn \\ 
& \gn{\ell}_{1s}  \gn{\ell}_{1t}= \mu_{ts}  \gn{\ell}_{1t}\gn{\ell}_{1s}
&& \gn{\ell}_{1s} \gu{\ell}_{kt}   =\mu_{1t}  \mu_{ts} \gu{\ell}_{kt} \gn{\ell}_{1s} 
\nn \\
&\gu{\ell}_{js} \gu{\ell}_{kt}= \mu_{jk} \mu_{ts} \gu{\ell}_{kt}\gu{\ell}_{js}
\end{align}
and
\begin{align}\label{comm-rel-nu2}
& \T_{k 1} D^{\pm 1} = \prod\limits_{\alpha=1}^{n}\big( \mu_{\alpha 1} \mu_{ k\alpha}\big)^{\pm 1} D^{\pm 1}  \T_{k 1} 
&& \T_{\ell 1}^{ - 1} D^{\pm 1} = \prod\limits_{\alpha=1}^{n}\big( \mu_{\alpha 1} \mu_{ \ell\alpha}\big)^{\mp 1}D^{\pm 1} \T_{\ell 1}^{ - 1}  \nn
\\
& \gn{\ell}_{1s} D^{\pm 1} =  \prod\limits_{\alpha=1}^{n}\big( \mu_{ 1\alpha} \mu_{\alpha s}\big)^{\pm 1}  D^{\pm 1}  \gn{\ell}_{1s} 
&& \gu{\ell}_{kt} D^{\pm 1} =  \prod\limits_{\alpha=1}^{n}\big( \mu_{\alpha t} \mu_{ k\alpha}\big)^{\pm 1} D^{\pm 1} \gu{\ell}_{kt}
\end{align}
for all $j,k=1 , \dots ,n$ and $s,t=2, \dots , n$.
\end{lemma}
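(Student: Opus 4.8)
The plan is to present \eqref{Aell} as a surjection from the abstractly presented algebra $\mathcal{A}_\ell := \C_q[\T_{k1},\T_{\ell1}^{-1},\gn{\ell}_{1s},\gu{\ell}_{js},D^{\pm1}]/I_\ell$ onto $A_\ell$, and then to prove that this surjection is injective by a PBW argument. First I would check generation. Since $A_\ell$ is generated by the $\T_{ij}$, $D^{\pm1}$ and $\T_{\ell1}^{-1}$, it is enough to recover each $\T_{ij}$ from the new generators, and this is immediate from the defining formulas: the $\T_{k1}$ are generators, while $\T_{\ell s}=\T_{\ell1}\gn{\ell}_{1s}$ and $\T_{js}=\gu{\ell}_{js}+\T_{j1}\gn{\ell}_{1s}$ for $j\neq\ell$ and $s\geq2$. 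Thus the assignment of the abstract generators to these elements of $A_\ell$ is surjective, provided it is well defined.

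Well-definedness is the computational core: I would verify that the images satisfy every relation in $I_\ell$, namely \eqref{comm-rel-nu} and \eqref{comm-rel-nu2}. Each is obtained by a direct manipulation from the concise rules \eqref{commGL-mu} and \eqref{comm-rel-a-ainv}, using repeatedly $\mu_{ij}\mu_{ji}=1$ and, in the cancellations, the identity \eqref{prop2}. The only relations requiring a little care are those involving $\gu{\ell}_{js}$: there one substitutes $\gu{\ell}_{js}=\T_{js}-\T_{j1}\T_{\ell1}^{-1}\T_{\ell s}$ and checks that the cross terms recombine with the prescribed $\mu$-coefficients. Since these are precisely the computations underlying the change-of-coordinate isomorphism of Proposition \ref{prop:iso}, I would run them in parallel with that proof.

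For injectivity I would exploit that, at the Takhtajan--Sudbery value $u=1$, all relations are of skew-commuting type. In \eqref{multiGL-one} the quadratic correction term drops out, so $A=\widetilde{\cO}_q(\rGL(n))$ is a multiparameter quantum affine space localized at the Ore elements $D$ and $\T_{\ell1}$; it therefore carries a PBW basis of ordered monomials in the $\T_{ij}$ (times $\T_{\ell1}^{-1}$ and $D^{\pm1}$). Likewise every relation in $I_\ell$ reads $xy=\lambda\,yx$ with $\lambda$ a monomial in the $\mu$'s whose reversed relation carries the reciprocal parameter, so $\mathcal{A}_\ell$ is again a quantum (Laurent) affine space and, by the Diamond Lemma, has a PBW basis of ordered monomials. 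The change of variables is unitriangular with respect to the filtration by degree in the $\T_{ij}$: modulo lower-order terms and the units $\T_{\ell1}^{-1},D^{\pm1}$, $\gn{\ell}_{1s}$ has leading symbol $\T_{\ell s}$ and $\gu{\ell}_{js}$ has leading symbol $\T_{js}$. Passing to associated graded identifies the two quantum affine spaces, so ordered monomials in the new generators stay linearly independent in $A_\ell$ and the surjection $\mathcal{A}_\ell\to A_\ell$ is an isomorphism.

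The bulk of the labor is the relation check of the second step, but it is routine; the genuinely delicate point, and the main obstacle, is the completeness argument of the third step, i.e.\ confirming that \eqref{comm-rel-nu}--\eqref{comm-rel-nu2} exhaust the relations. This rests on the vanishing of the correction term at $u=1$, without which $A$ would be a genuine quantum matrix bialgebra rather than a quantum affine space, the new generators would satisfy further relations, and the clean PBW/triangularity comparison would break down.
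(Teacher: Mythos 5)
Your steps 1 and 2 are sound, and step 2 is in fact the entirety of the paper's own proof: the paper verifies \eqref{comm-rel-nu}--\eqref{comm-rel-nu2} directly from \eqref{commGL-mu}, \eqref{comm-det-one} and \eqref{comm-rel-a-ainv} and says nothing more. The genuine gap is in your third step, and it cannot be repaired: the listed commutation relations do \emph{not} exhaust the relations among the chosen generators, because in $A_\ell$ the quantum determinant is not algebraically independent of the remaining generators. Already for $n=2$, $\ell=1$, using $bc=q^{2}cb$ and $ca^{-1}=q\,a^{-1}c$ one computes
\[
D \;=\; ad-q^{-1}bc \;=\; ad-q\,cb \;=\; a\,\bigl(d-ca^{-1}b\bigr)\;=\;\T_{11}\,\gu{1}_{22}\,,
\]
and for general $n$ and $\ell$, substituting $\T_{\ell s}=\T_{\ell 1}\gn{\ell}_{1s}$ and $\T_{js}=\gu{\ell}_{js}+\T_{j1}\gn{\ell}_{1s}$ into \eqref{det-multi} expresses $D$ as a polynomial $P$ in $\T_{k1},\gn{\ell}_{1s},\gu{\ell}_{js}$ alone. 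In your abstract algebra $\mathcal{A}_\ell$, by contrast, $D$ is a free Laurent generator: your own Diamond Lemma step produces a PBW basis of $\mathcal{A}_\ell$ in which $D$ is an independent variable, so $D-P\neq 0$ there, while its image in $A_\ell$ is zero. Hence the surjection $\mathcal{A}_\ell\to A_\ell$ has nontrivial kernel. Equivalently, $\mathcal{A}_\ell$ is a localized quantum affine space on $n^{2}+1$ variables, of Gelfand--Kirillov dimension $n^{2}+1$, while $A_\ell$ is a localization (at the normal elements $\T_{\ell 1}$ and $D$) of the quantum affine space on the $n^{2}$ variables $\T_{ij}$, of Gelfand--Kirillov dimension $n^{2}$; so no isomorphism exists and the claimed identification of associated graded algebras is impossible. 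The vanishing of the quadratic correction at $u=1$ does not help here: the determinant relation survives at $u=1$.

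This also explains the mismatch with the paper. The paper's proof makes no completeness claim: it reads the Lemma as asserting that the listed elements generate $A_\ell$ and satisfy the listed commutation rules, with $D^{\pm1}$ understood throughout as the quantum determinant \eqref{det-multi} and its inverse (a dependent element), and only this weaker statement is used later (e.g.\ in \eqref{F0-GLn} and in the proof of Proposition \ref{prop:HG}). To get an honest presentation, as you attempt, one must either adjoin the relation $D=P$ to $I_\ell$, or drop $D^{\pm1}$ from the generating set and instead invert the appropriate quantum determinant in the $\gu{\ell}_{js}$; for $n=2$, $\ell=1$ this gives $A_1=\C_q[a^{\pm1},c,\gn{1}_{12},(\gu{1}_{22})^{\pm1}]$, which matches (after the change of variable $D=\T_{11}\gu{1}_{22}$) the non-redundant presentation the paper uses in its $n=2$ subsection, where $d$ rather than $D$ is eliminated. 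With such a corrected generating set your PBW/filtration strategy does go through; as written, however, the completeness claim you yourself flagged as the delicate point is not merely delicate but false.
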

\begin{proof}
The commutation relations  above are computed directly  from \eqref{commGL-mu}, \eqref{comm-det-one} and   \eqref{comm-rel-a-ainv}.
\end{proof}
In the identification \eqref{Aell}, the algebra isomorphism  
$ \Psi_{\ell m}:A_{\ell}[\T_{m 1}^{-1}] \to A_{m}[\T_{\ell 1}^{-1}]$ in Proposition \ref{prop:iso}  is
 given on the algebra generators by 
 \beq\label{psi-lm2}
 \T_{j 1} \mapsto \T_{j 1} \, ,\;
 \T_{\ell 1}^{ - 1} \mapsto \T_{\ell 1}^{ - 1} \, ,\;
  \T_{m 1}^{ - 1} \mapsto \T_{m 1}^{ - 1} \, ,\;
\gn{\ell}_{1j} \mapsto  \gn{m}_{1j} \, ,\;
\gu{\ell}_{mj} \mapsto  \T_{m1} \T_{\ell 1}^{-1} \gu{m}_{\ell j} \, ,\;
\gu{\ell}_{ij} \mapsto \gu{m}_{ij}
\eeq
for all $i,j=1 , \dots , n$, $i \neq \ell,m$.
\\

The algebra $A=\widetilde{\cO}_q(\rGL(n))$ is an $H$-comodule algebra for the Hopf algebra 
 $H:=\widetilde{\cO}_q(P):=\widetilde{\cO}_q(\rGL(n))/(\T_{s1}, s \neq 1)$, with  right coaction given by the composition of the coproduct with the quotient map $pr: A \to H$:
 \beq\label{coac-n}
 \delta :=(id \otimes pr)\circ \Delta : A \to A \otimes H \, , \quad 
 \T_{ij} \mapsto \sum_{k=1}^n \T_{ik} \otimes p_{kj}\;   ,\quad
 D^{-1} \mapsto D^{-1}   \otimes \tilde{D}^{-1}  \; .
 \eeq 
 Here 
 $p_{ij}, \tilde{D}^{-1}$ denote the images of the generators $\T_{ij}, {D}^{-1}$ of $A$ under the quotient map.  The generators of the Hopf algebra  $\widetilde{\cO}_q(P)$ satisfy
\beq\label{comm-p}
p_{i k} p_{j l} = \mu_{ij} \mu_{lk} \, p_{j l}  p_{i k}  \, , \quad 
p_{i k} \tilde{D}^{\pm 1} = \prod\limits_{\alpha=1}^{n} (\mu_{\alpha k} \mu_{ i\alpha})^{\pm 1} \tilde{D}^{\pm 1} p_{i k}\, , \quad  
\forall ~  i,j,k,l  \; .
\eeq 
Moreover (cf. \eqref{det-multi}),
\beq 
\tilde{D}= p_{11} M_{11}= M_{11} p_{11}, \quad M_{11}:=\sum_{\sigma \in P_{n-1}} \epsilon_{\sigma_2 \cdots \sigma_n}p_{\sigma_2 2}\cdots p_{\sigma_n n}
\eeq
 with 
$\epsilon_{\sigma_2 \cdots \sigma_n}:=  \prod_{\alpha < \beta \, ; \,   \sigma_\beta < \sigma_\alpha }  (-q_{\sigma_\beta \sigma_\alpha})
$
and  $\epsilon_{\sigma_2 \cdots \sigma_n}=1$, when no   $q$ appear in the product.  Thus $p_{11}$ is an invertible group-like element, with $p_{11}^{-1}=M_{11}\tilde{D}^{-1}$.\\

Being $\delta( \T_{\ell 1})= \T_{\ell 1} \otimes p_{11}$,  the coaction $\delta$ in \eqref{coac-n} extends  as an algebra map in a unique way  to coactions $\delta_\ell: A_\ell \to A_\ell \otimes H$  on the algebra extensions $A_\ell= A[\T_{\ell 1}^{-1}]$,   $\ell=1, \dots , n$:
 \beq\label{coac-n2}
 \delta_{\ell} (\T_{\ell 1}^{-1}) =\T_{\ell 1}^{-1} \otimes p_{11}^{-1} ~ .
 \eeq
In  the identification $A_\ell= \C_q[\T_{k1}, \T_{\ell 1}^{ - 1}, \gn{\ell}_{1s}, \gu{\ell}_{js}, D^{\pm 1}]  \slash I_\ell$ in \eqref{Aell}, 
 \beq\label{coact-nu}
 \delta_{\ell} (\gn{\ell}_{1s})= \sum_{r=2}^n \gn{\ell}_{1r} \otimes p_{11}^{-1} p_{rs} 
 \; , \quad
  \delta_{\ell} (\gu{\ell}_{js})= \sum_{r=2}^n\gu{\ell}_{j r} \otimes   p_{r s} 
   \; , \quad j \neq \ell, ~ s>1.
 \eeq
For each $\ell$, the corresponding subalgebra of coinvariants $B_\ell:= A_\ell^{co H}$ is generated by the elements  
 \beq\label{coinv-gln}
 \gb{\ell}_j:= \T_{\ell 1}^{-1} \T_{j 1} \; , \quad j=1, \dots, n \, , \; (j \neq \ell). 
 \eeq
They satisfy commutation relations
\beq\label{comm-b}
  \gb{\ell}_i   \gb{\ell}_j = \mu_{\ell i} \mu_{i j} \mu_{j \ell}  ~  \gb{\ell}_j  \gb{\ell}_i
\eeq
In particular, for $\ell=1$, the generators of the algebra $B_1$ satisfy commutation relations
\beq\label{comm-b1}
  \gb{1}_i  b^{\scriptscriptstyle{(1)}}_j
 =  \mu_{i j}   \gb{1}_j \gb{1}_i ,
\eeq
being,  from \eqref{prop2}, $\mu_{1 i}\mu_{j 1}=1$ for all $i,j=2, \dots, n$.
 \medskip

\begin{proposition} \label{prop:b1=bl}
For each $\ell=1, \dots, n$ the  algebra $B_\ell$ is isomorphic to the algebra $B_1$ via the map
\beq
\beta_\ell: B_1 \to B_\ell \, ,\quad   \gb{1}_j \mapsto \left\{ 
\begin{array}{ll}
 \gb{\ell}_{j+ \ell -1} & \mbox{if }~ j=2, \dots , n-\ell+1 ~;
 \\
 \\
 \gb{\ell}_{j+\ell -1-n} & \mbox{if } ~ j=n-\ell+2, \dots , n ~.
\end{array}
\right. 
\eeq
\end{proposition}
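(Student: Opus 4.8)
The plan is to treat both $B_1$ and $B_\ell$ as multiparameter quantum affine spaces presented by their generators subject to the quadratic commutation relations \eqref{comm-b1} and \eqref{comm-b} respectively. These are iterated $q$-commutation algebras admitting a PBW monomial basis, so the listed quadratic relations form a \emph{complete} presentation; consequently an algebra homomorphism out of $B_1$ is uniquely determined by the images of the generators, and it exists as soon as those images satisfy the relations of $B_1$. Thus the whole proposition reduces to two things: checking that $\beta_\ell$ respects relations, and exhibiting an inverse.

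First I would record that $\beta_\ell$ acts on indices as the cyclic shift $\sigma(j):=((j+\ell-2)\bmod n)+1$, which sends the generating set $\{\gb{1}_j\}_{j=2,\dots,n}$ of $B_1$ bijectively onto the generating set $\{\gb{\ell}_k\}_{k\neq\ell}$ of $B_\ell$; the excluded value $\sigma(1)=\ell$ never occurs because $j$ ranges over $2,\dots,n$. Hence $\beta_\ell$ is already a bijection at the level of generators, and the opposite shift is its candidate inverse.

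The heart of the argument is that $\beta_\ell$ preserves the defining relations. By \eqref{comm-b1} the generators of $B_1$ obey $\gb{1}_i\gb{1}_j=\mu_{ij}\,\gb{1}_j\gb{1}_i$, while by \eqref{comm-b} their images obey $\gb{\ell}_{\sigma(i)}\gb{\ell}_{\sigma(j)}=\mu_{\ell\sigma(i)}\mu_{\sigma(i)\sigma(j)}\mu_{\sigma(j)\ell}\,\gb{\ell}_{\sigma(j)}\gb{\ell}_{\sigma(i)}$. Writing $\mu_{xy}=q^{\mathrm{sgn}(y-x)}$ from \eqref{mu-coeff}, it therefore suffices to prove the scalar identity
\[
\mathrm{sgn}(\sigma(i)-\ell)-\mathrm{sgn}(\sigma(j)-\ell)+\mathrm{sgn}(\sigma(j)-\sigma(i))=\mathrm{sgn}(j-i),
\]
equivalently $\mu_{\ell\sigma(i)}\mu_{\sigma(i)\sigma(j)}\mu_{\sigma(j)\ell}=\mu_{ij}$, for all $i,j\in\{2,\dots,n\}$. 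By the antisymmetry $\mu_{xy}\mu_{yx}=1$ it is enough to treat $i<j$, and I would split into cases according to whether the shift wraps around $n$. When neither or both of $i,j$ wrap, $\sigma$ preserves their order so $\mu_{\sigma(i)\sigma(j)}=\mu_{ij}$, and the two boundary factors cancel directly via \eqref{prop2} (both $\sigma(i),\sigma(j)$ lie on the same side of $\ell$). The only delicate case is when exactly one of $i,j$ wraps: then $\sigma(i)>\ell>\sigma(j)$, so $\sigma$ \emph{reverses} their order and $\mu_{\sigma(i)\sigma(j)}$ picks up the inverse power, but now both boundary factors $\mu_{\ell\sigma(i)}$ and $\mu_{\sigma(j)\ell}$ equal $q$, exactly compensating the flip and returning $\mu_{ij}$. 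This order-reversing case is the main (and essentially the only) obstacle.

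Finally, the opposite cyclic shift $\gb{\ell}_k\mapsto\gb{1}_{\sigma^{-1}(k)}$ defines, by the identical computation, an algebra homomorphism $B_\ell\to B_1$ that is inverse to $\beta_\ell$ on generators. Being mutually inverse algebra homomorphisms, they are isomorphisms, which proves the claim.
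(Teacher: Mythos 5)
Your proof is correct and follows essentially the same route as the paper's: both verify that $\beta_\ell$ preserves the commutation relations \eqref{comm-b1} and \eqref{comm-b} by the same three-way case analysis on whether the shifted indices wrap around $n$, using \eqref{mu-coeff} and \eqref{prop2}, with the order-reversing mixed case handled by exactly the same cancellation $q\cdot q^{-1}\cdot q=\mu_{jk}$. The only difference is that you make explicit two points the paper leaves implicit, namely the completeness of the quadratic presentation (PBW basis) and the inverse cyclic shift establishing bijectivity.
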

\begin{proof}
If $j,k=2, \dots , n-\ell+1$, then 
\begin{align*}
\beta_\ell(\gb{1}_j)\beta_\ell(\gb{1}_k)
&=
 \gb{\ell}_{j+ \ell -1}    \gb{\ell}_{k+ \ell -1} 
 =
 \mu_{\ell, j+ \ell -1} \mu_{j+ \ell -1, k+ \ell -1} \mu_{k+ \ell -1 , \ell}  ~  \gb{\ell}_{k+ \ell -1}  \gb{\ell}_{j+ \ell -1}
  \\
&= 
 \mu_{jk}  \gb{\ell}_{k+ \ell -1}  \gb{\ell}_{j+ \ell -1}
=  \mu_{jk}  \beta_\ell(\gb{1}_k)\beta_\ell(\gb{1}_j)
 \end{align*}
where we used that $j+\ell-1, k+\ell -1> \ell$ and thus $\mu_{\ell, j+ \ell -1} \mu_{k+ \ell -1 , \ell} =1 $ because of \eqref{prop2} and $ \mu_{j+ \ell -1 , k+ \ell -1} =  \mu_{j k }$.  Analogous computation is done for $j,k=n-\ell+2, \dots , n$. 
Finally, if $j=2, \dots , n-\ell+1$ and $k=n-\ell+2, \dots , n$,  
\begin{align*}
\beta_\ell( \gb{1}_j)\beta_\ell( \gb{1}_k)
&=
  \gb{\ell}_{j+ \ell -1}    \gb{\ell}_{k+ \ell -1-n} 
 =
 \mu_{\ell , j+ \ell -1} \mu_{j+ \ell -1 , k+ \ell -1-n} \mu_{k+ \ell -1 -n, \ell}  ~  \gb{\ell}_{k+ \ell -1-n}  \gb{\ell}_{j+ \ell -1}
  \\
&= 
\mu_{jk} ~  \gb{\ell}_{k+ \ell -1-n}  \gb{\ell}_{j+ \ell -1}
=  \mu_{jk}  \beta_\ell( \gb{1}_k)\beta_\ell( \gb{1}_j)
 \end{align*}
being here $j+ \ell -1> \ell>k+ \ell -1-n $ and thus $\mu_{jk}= \mu_{\ell ,j+ \ell -1} = \mu_{k+ \ell -1 -n, \ell}= \mu_{j+ \ell -1 , k+ \ell -1-n}^{-1}$ because of \eqref{mu-coeff}.
\end{proof}

 \begin{proposition}\label{prop:HG}
 For each $\ell=1, \dots, n$,
the algebra extension $B_\ell \subset A_\ell$  is a faithfully flat $H$-Galois extension.
 \end{proposition}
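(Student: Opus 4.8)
The plan is to deduce both assertions at once by showing that the extension $B_\ell\subset A_\ell$ is \emph{cleft}: as recorded right after Definition~\ref{princ-def}, a cleft extension is automatically principal, hence faithfully flat and $H$-Galois. Thus it suffices to produce a convolution-invertible $H$-comodule map $\gamma_\ell\colon H\to A_\ell$. This is the algebraic counterpart of the classical fact that $\rGL(n)\to\bP^{n-1}(\C)$ trivialises over each chart $U_\ell$, the local section being encoded in the factorisation $(\T_{ij})=L^{(\ell)}N^{(\ell)}$ displayed above: the first column of $L^{(\ell)}$ yields the base coordinates $\gb{\ell}_j=\T_{\ell 1}^{-1}\T_{j1}$ of \eqref{coinv-gln} generating $B_\ell$, while the remaining entries $\gu{\ell}_{js},\gn{\ell}_{1s}$ together with $\T_{\ell 1}^{\pm1}$ and $D^{\pm1}$ play the role of fibre coordinates.

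First I would treat the aligned case $\ell=1$, where the distinguished index of the parabolic $H=\widetilde{\cO}_q(P)$ coincides with the index singled out by the localisation. Here I claim that the assignment
\[
\gamma_1(p_{11})=\T_{11},\qquad \gamma_1(p_{1s})=\T_{1s},\qquad \gamma_1(p_{rs})=\gu{1}_{rs},\qquad \gamma_1(\tilde D^{-1})=D^{-1}\quad(r,s\ge 2)
\]
extends to an $H$-comodule \emph{algebra} map, so that $B_1\subset A_1$ is in fact trivial. Three checks are required. Well-definedness amounts to verifying that these images obey the relations \eqref{comm-p} of $H$; this is a direct comparison with \eqref{comm-rel-nu}--\eqref{comm-rel-nu2}, using repeatedly $\mu_{ij}\mu_{ji}=1$ and \eqref{prop2}--\eqref{mu-coeff} (for instance $\mu_{s1}\mu_{1i}=1$ forces $\T_{11}$ to commute with $\gu{1}_{is}$, which is exactly what is needed to reproduce the relation between $p_{1s}$ and $p_{rt}$). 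That $\gamma_1$ is an $H$-comodule map follows by matching the coactions \eqref{coac-n}, \eqref{coac-n2} and \eqref{coact-nu} on the images with $(\gamma_1\otimes\mathrm{id})\circ\Delta_H$ on the generators; since $\gamma_1(p_{11})=\T_{11}$ and $\delta(\T_{1s})=\sum_k\T_{1k}\otimes p_{ks}$, the nilpotent row is handled automatically. Finally $\gamma_1$, being a comodule algebra map, is convolution invertible with inverse $\gamma_1\circ S$; compatibility of $\gamma_1(\tilde D)=\T_{11}\,\gamma_1(M_{11})$ with $D$ is the quantum Laplace (Schur-complement) expansion of the determinant.

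For general $\ell$ the construction is identical once the distinguished index $1$ of $H$ is identified with $\ell$: one lifts $p_{1s}$ to the $\ell$-th row $\T_{\ell s}$ and the Levi generators $p_{rs}$ $(r\ge 2)$ to the $\gu{\ell}$-entries indexed by $\{1,\dots,n\}\setminus\{\ell\}$ through the cyclic correspondence $j\mapsto j+\ell-1\ (\mathrm{mod}\ n)$ of Proposition~\ref{prop:b1=bl}. Equivalently, and more efficiently, I would apply \cite[Thm.~4.8]{afl} with quantum section $a_{11}$ and trivialising elements $d_\ell=\T_{\ell 1}$, which packages these verifications uniformly in $\ell$ and returns that each $B_\ell\subset A_\ell$ is cleft; by Definition~\ref{princ-def} it is then a faithfully flat $H$-Galois extension.

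I expect the only genuinely delicate point to be the mismatch of privileged indices: $A_\ell$ and $B_\ell$ single out $\ell$ whereas $H$ singles out $1$, and the Levi entry $p_{\ell s}$ admits no naive lift since $\gu{\ell}_{\ell s}=0$. This is precisely what forces the cyclic relabelling of Proposition~\ref{prop:b1=bl} (and underlies the need for the quantum-section formalism of \cite[Thm.~4.8]{afl}); with that in hand, the remaining work is the bookkeeping of the $\mu$-coefficients \eqref{mu-coeff}, entirely analogous to the proofs of Propositions~\ref{prop:iso} and \ref{prop:b1=bl}.
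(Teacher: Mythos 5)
Your reduction of the statement to cleftness is legitimate (cleft $\Rightarrow$ principal by the remark after Definition~\ref{princ-def}), and your treatment of $\ell=1$ is correct: it coincides with the Lemma the paper proves immediately after this proposition, where $\gamma_1$ of \eqref{gamma1} is shown to extend to an $H$-comodule \emph{algebra} map, hence has convolution inverse $\gamma_1\circ S$. The gap is your claim that ``for general $\ell$ the construction is identical'' via the cyclic relabelling $\sigma(j)=j+\ell-1$ (indices mod $n$) of Proposition~\ref{prop:b1=bl}. That relabelling works for the base algebras $B_\ell$ only because the relations \eqref{comm-b} contain the compensating factors $\mu_{\ell i}\mu_{j\ell}$, which in the wrap-around case cancel the sign-flip of $\mu_{\sigma(r)\sigma(r')}$. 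The fibre generators have no such factors: by \eqref{comm-rel-nu} one has $\gu{\ell}_{js} \gu{\ell}_{kt}= \mu_{jk} \mu_{ts} \gu{\ell}_{kt}\gu{\ell}_{js}$ and $\T_{\ell s}\gu{\ell}_{it} = \mu_{ts}\mu_{\ell i}\gu{\ell}_{it}\T_{\ell s}$, whereas in $H$ the relations \eqref{comm-p} require coefficients $\mu_{rr'}$ and $\mu_{1r}$ respectively. Now take $r$ in the wrap-around range $n-\ell+2,\dots,n$ (nonempty for every $\ell\neq 1$), so that $\sigma(r)<\ell$: then $\mu_{\ell\sigma(r)}=q^{-1}$ while $\mu_{1r}=q$, and similarly $\mu_{\sigma(r)\sigma(r')}=q^{-1}\neq q=\mu_{rr'}$ when $r'$ lies in the first range. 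So your proposed $\gamma_\ell$ (for $\ell\neq1$) is an $H$-comodule map but \emph{not} an algebra map; consequently the inverse $\gamma_\ell\circ S$ is unavailable and convolution invertibility --- the whole content of cleftness --- is not established. That this is a genuine difficulty and not bookkeeping is visible already at $n=2$: the paper's cleaving map $\gamma_2$ on the chart $U_2$ is not an algebra map, and proving its convolution invertibility occupies all of Appendix~\ref{app:A}. Your fallback, citing \cite[Thm.~4.8]{afl}, is not a proof either: the paper stresses that the present deformation is ``substantially different'' from that of \cite{afl}, so the hypotheses of that theorem would have to be verified, which you do not do.

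For contrast, the paper's actual proof bypasses cleftness entirely. Since $H=\widetilde{\cO}_q(P)$ is cosemisimple with bijective antipode, Schneider's theorem \cite{sch90} reduces both bijectivity of the canonical map $\chi_\ell$ and faithful flatness to \emph{surjectivity} of $\chi_\ell$; surjectivity is then checked only on algebra generators of $H$, via the explicit preimages $\chi_\ell(\T_{\ell 1}^{-1}\otimes_{B_\ell}\T_{\ell 1})=1\otimes p_{11}$, $\chi_\ell\big(\sum_k S(\T_{ik})\otimes_{B_\ell}\T_{kr}\big)=1\otimes p_{ir}$, $\chi_\ell(D\otimes_{B_\ell}D^{-1})=1\otimes\tilde{D}^{-1}$, together with the multiplicativity argument of \cite{Sch00} ensuring that generators suffice. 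This argument is uniform in $\ell$ precisely because it never asks the localization chart to be aligned with the parabolic. To salvage your route you would need, for each $\ell\neq1$, an explicit non-multiplicative cleaving map with a direct proof of convolution invertibility, in the spirit of \eqref{cleaving2-gl2} and Appendix~\ref{app:A}.
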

 \begin{proof} 
 Since $H=\widetilde{\cO}_q(P)$ is cosemisimple and has
bijective antipode,  the surjectivity of the canonical map implies bijectivity and faithfully
flatness of the extension, \cite[Th. I]{sch90}. 
Moreover, even if the canonical map and thus its inverse are not in general algebra maps, in order to show the surjectivity of the canonical map  of an algebra extension $B=A^{\co H} \subseteq A$ it is enough to show that for each generator $h$ of $H$ the element $1 \otimes h$ is in the image of $\chi$,  \cite{Sch00}. 
Indeed if $1 \otimes h =\chi(\sum_i a_i \otimes_B a'_i)$ and 
$1 \otimes k =\chi(\sum_j c_j \otimes_B c'_j)$ for $h,k \in H$ generic, then basic properties of the canonical map give
$  \chi (\sum_{i,j} a_i c_j \otimes_B c'_j a'_i )=1 \otimes kh $, that is $1 \otimes kh$ is in the image of $\chi$.   Being $\chi$ left $A$-linear, this implies its
surjectivity.
 
Let $\chi_\ell: A_\ell \otimes_{B_\ell} A_\ell \rightarrow A_\ell \otimes H$ be the canonical map of the algebra extension $B_\ell \subset A_\ell$ defined as in \eqref{can-map}.  We recall from \eqref{coac-n} and \eqref{coac-n2} the expression
$$
 \delta_{\ell} (\T_{i 1} ) =\T_{i 1}  \otimes p_{11}  \, ,\quad
 \delta_{\ell} (\T_{\ell 1}^{- 1}) =\T_{\ell 1}^{- 1} \otimes p_{11}^{- 1} \, ,\quad
 \delta_{\ell} ( \T_{i r}) = \sum_{k=1}^n \T_{i k} \otimes p_{k r}\,   ,\quad
  \delta_{\ell} (D^{-1}) = D^{-1}   \otimes \tilde{D}^{-1}   
$$
 for the coaction 
 $\delta_\ell: A_\ell \to A_\ell \otimes H$ on the algebra generators, for all 
 $i=1, \dots ,n$ and $ r=2, \dots ,n$. Then, from the left $A_\ell$-linearity of  the canonical map, 
 \beq\label{mappa-can-inv}
 \chi_\ell(\T_{\ell 1}^{- 1} \otimes_{B_\ell} \T_{\ell 1})= 1 \otimes p_{11}
 \, , \quad  
 \chi_\ell \Big( \sum_{k=1}^n S(\T_{i k}) \otimes_{B_\ell} \T_{k r} \Big)= 1 \otimes p_{ir}
 \, , \quad  
  \chi_\ell(D \otimes_{B_\ell} D^{-1})= 1 \otimes \tilde{D}^{-1}
   \, ,
  \eeq
for  $S$  the antipode of the Hopf algebra $A=\widetilde{\cO}_q(\rGL(n))$, with $A \subset A_\ell= A[\T_{\ell 1}^{-1}]$.   This shows the surjectivity of the canonical map $\chi_\ell$.  
 \end{proof}

For $\ell=1 $, the algebra extension $B_1 \subset A_1$ is trivial:
  \begin{lemma}
  For $\ell=1$, the map $ \gamma_1 :  H \to A_1$ defined on the algebra generators by 
    \beq\label{gamma1}
 {\gamma}_1: ~\tilde{D}^{- 1} \mapsto {D}^{- 1} \; , \quad
  p_{1j}  \mapsto \T_{1 j}   \; , \quad
   p_{r s}\mapsto \gu{1}_{r s} \; ,  \quad   j=1 , \dots , n, \; r , s=2, \dots, n ,
 \eeq 
 and extended to the whole of $H$ as an algebra map
 is a cleaving map for the 
 algebra extension $B_1 \subset A_1$,  thus trivial.
 \end{lemma}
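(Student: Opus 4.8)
The plan is to show that $\gamma_1$ is a unital $H$-comodule algebra map which is convolution invertible; by the characterization of trivial extensions recalled above (with convolution inverse $\bar{\gamma}_1 := \gamma_1\circ S$), this exhibits $\gamma_1$ as a cleaving map and makes $B_1\subset A_1$ a trivial extension. So there are three things to establish: (i) $\gamma_1$ is a well-defined unital algebra map; (ii) $\gamma_1$ intertwines the coactions, i.e. $\delta_1\circ\gamma_1 = (\gamma_1\otimes id)\circ\Delta_H$, where $\Delta_H$ is the coproduct of $H$ regarded as the regular right coaction of $H$ on itself; (iii) $\gamma_1$ is convolution invertible. Granting (i), step (iii) is formal: for $\bar{\gamma}_1 = \gamma_1\circ S$ one computes $(\gamma_1*\bar{\gamma}_1)(h) = \gamma_1(\one{h})\,\gamma_1(S(\two{h})) = \gamma_1(\one{h}\,S(\two{h})) = \varepsilon(h)\,1$ using that $\gamma_1$ is an algebra map together with the antipode axiom, and symmetrically for $\bar{\gamma}_1*\gamma_1$. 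Hence I concentrate on (i) and (ii).

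For (i), I must check that the images $\gamma_1(p_{1j})=\T_{1j}$, $\gamma_1(p_{rs})=\gu{1}_{rs}$ and $\gamma_1(\tilde{D}^{-1})=D^{-1}$ satisfy the defining relations \eqref{comm-p} of $H$ (with $p_{s1}=0$ for $s\geq 2$) together with $\tilde{D}\,\tilde{D}^{-1}=1$. The commutation relations among the images are precisely the $\ell=1$ specialization of \eqref{comm-rel-nu} and \eqref{comm-rel-nu2}: the relations among the $\T_{1j}$ follow from \eqref{commGL-mu} with equal row index, those among the $\gu{1}_{rs}$ are the last line of \eqref{comm-rel-nu}, and the mixed $\T_{1k}$--$\gu{1}_{rs}$ relations and the $D^{\pm1}$ relations are obtained from \eqref{comm-rel-nu} and \eqref{comm-rel-nu2} after writing $\T_{1j}=\T_{11}\gn{1}_{1j}$ and using $\mu_{ij}\mu_{ji}=1$ and \eqref{prop2}; in each case the $\mu$-factor produced matches $\mu_{ij}\mu_{lk}$ in \eqref{comm-p}. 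The one substantive point is the image of the determinant relation: since $p_{s1}=0$ for $s\geq 2$, in $H$ one has $\tilde{D}=p_{11}M_{11}$, so consistency of $\gamma_1$ on $\tilde{D}^{-1}$ forces the identity $D=\T_{11}\,\gamma_1(M_{11})$ in $A_1$, that is, a quantum Schur-complement factorization of the quantum determinant along the first column relative to the block factorization $(\T_{ij})=L\,N$ displayed above (with $N$ unipotent, hence contributing $1$). I expect this determinantal identity to be the main obstacle; it should be proved by expanding $D$ as in \eqref{det-multi} along the first column, substituting $\T_{rs}=\gu{1}_{rs}+\T_{r1}\T_{11}^{-1}\T_{1s}$ into $M_{11}$ and collecting terms, the cancellations being governed by \eqref{commGL-mu} and the multiplicativity property \eqref{prop2} of the $\mu$'s.

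For (ii), both $\delta_1\circ\gamma_1$ and $(\gamma_1\otimes id)\circ\Delta_H$ are algebra maps $H\to A_1\otimes H$ (the former because $A_1$ is an $H$-comodule algebra, the latter as a composite of algebra maps), so it suffices to verify equality on the generators of $H$. On $p_{1j}$ one has $\Delta_H(p_{1j})=\sum_k p_{1k}\otimes p_{kj}$, whence $(\gamma_1\otimes id)\Delta_H(p_{1j})=\sum_k \T_{1k}\otimes p_{kj}=\delta_1(\T_{1j})$ by \eqref{coac-n}. On $p_{rs}$ with $r,s\geq 2$, using $p_{r1}=0$ gives $\Delta_H(p_{rs})=\sum_{k\geq 2} p_{rk}\otimes p_{ks}$, so $(\gamma_1\otimes id)\Delta_H(p_{rs})=\sum_{k\geq 2}\gu{1}_{rk}\otimes p_{ks}=\delta_1(\gu{1}_{rs})$ by \eqref{coact-nu}. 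On $\tilde{D}^{-1}$, group-likeness yields $(\gamma_1\otimes id)\Delta_H(\tilde{D}^{-1})=D^{-1}\otimes\tilde{D}^{-1}=\delta_1(D^{-1})$ by \eqref{coac-n} and \eqref{coac-n2}. This proves (ii). Combining (i), (ii) and the formal step (iii), $\gamma_1$ is a convolution-invertible $H$-comodule algebra map, i.e. a cleaving map, and therefore $B_1\subset A_1$ is trivial.
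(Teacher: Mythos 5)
Your proposal is correct and follows the same route as the paper's proof: well-definedness of $\gamma_1$ as an algebra map is checked against the relations \eqref{comm-p}, the comodule property is verified on generators by comparing \eqref{coac-n} and \eqref{coact-nu} with the coproduct of $H$, and convolution invertibility is the formal consequence $\bar{\gamma}_1=\gamma_1\circ S$ of $\gamma_1$ being an algebra map --- these are exactly the paper's three steps. Where you genuinely differ is in your point (i): the paper's proof compares only the commutation relations of $\T_{1j}$, $\gu{1}_{rs}$, $D^{-1}$ with those of $p_{1j}$, $p_{rs}$, $\tilde{D}^{-1}$ and stops, while you correctly note that the presentation of $H$ (as a quotient of $\widetilde{\cO}_q(\rGL(n))$) also contains the relation $\tilde{D}\,\tilde{D}^{-1}=1$ with $\tilde{D}=p_{11}M_{11}$, so that $\gamma_1$ descends to a well-defined map only if the factorization identity $D=\T_{11}\,\gamma_1(M_{11})$ holds in $A_1$. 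This requirement is real and the paper is silent about it; it is precisely what makes the matrix form of the cleaving map consistent (in the $n=2$ case of \eqref{cleaving1-gl2} and \eqref{scomp1} it reduces to $a(d-ca^{-1}b)=ad-q^{-1}bc=D$, i.e.\ $\gu{1}_{22}=a^{-1}D$). The identity is true --- it is the quantum Gauss/Schur-complement factorization of the quantum determinant along the first column --- and your proposed strategy (expand $D$ via \eqref{det-multi}, substitute $\T_{rs}=\gu{1}_{rs}+\T_{r1}\T_{11}^{-1}\T_{1s}$ into $M_{11}$, cancel using \eqref{commGL-mu} and \eqref{prop2}) is the right one, but you leave it as a sketch; carrying out that computation is what would separate your outline (and, implicitly, the paper's) from a fully complete argument. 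On this step your attempt is therefore more careful than the published proof, not less.
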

 \begin{proof}
First we show that  the map $\gamma_1$ can be extended consistently  
 to the whole of $H$ as an algebra map.  
  From
 \eqref{commGL-mu} and \eqref{comm-rel-nu}
 \begin{align}\label{comm-an}
& \T_{1 k} \T_{1 j} =   \mu_{j k} \, \T_{1 j}  \T_{1 k}  \; , \quad
& \T_{1 k} \gu{1}_{r s} =  \mu_{1 r} \mu_{s k}  \gu{1}_{r s} \T_{1 k }  \; ,  
  \qquad
 \gu{1}_{r s} \gu{1}_{p t}= \mu_{ r p} \mu_{ts} \gu{1}_{p t}\gu{1}_{r s}
\end{align}
 moreover, from \eqref{comm-det-one} and  \eqref{comm-rel-nu2}
$$
\T_{1 k} D^{- 1} = \prod\limits_{\alpha=1}^{n} (\mu_{\alpha k} \mu_{ 1 \alpha})^{- 1} D^{-1} \T_{1 k} 
\qquad
 \gu{\ell}_{r s} D^{-1} =  \prod\limits_{\alpha=1}^{n}\big( \mu_{\alpha s} \mu_{ r\alpha}\big)^{-1} D^{- 1} \gu{\ell}_{rs} \; .
$$
Comparing these commutation relations with those among the generators $p_{ik}, \tilde{D}^{-1}$ of $H$ in \eqref{comm-p}, we see that the map $\gamma_1$ can be extended consistently  
 to the whole of $H$ as an algebra map. 
\\
It is immediate to see, comparing expressions \eqref{coac-n} and \eqref{coact-nu} for the coactions,  that the map ${\gamma}_1: H \to A_1$ is also an 
$H$-comodule morphism. 
Finally, being an algebra map, it is convolution invertible with inverse  given by $\gamma_1 \circ S$, for $S$ the antipode of $H$.
\end{proof}

\medskip
We now show that this family of $H$-Galois extensions forms a quantum principal bundle
on the quantum ringed  space $(\bP^{n-1}(\C), \cO^q_{\bP^{n-1}(\C)})$ for the 
$\{U_\ell\}$ cover as defined above.

\begin{proposition}\label{prop:sheafF}
Let the notation be as above. Then
\begin{enumerate}
\item The assignment
$\cF: U_\ell \mapsto A_\ell$, for $A_\ell$ the algebras defined in \eqref{Aell}
gives a sheaf $\cF$ of $H$-comodule algebras, for the topology generated by the covering $\{U_\ell\}$.

\item The sheaf $\cF$  is a  quantum   principal bundle (in the sense of Definition \ref{qpb-def})
over the quantum ringed space $(\bP^{n-1}(\C), \cO^q_{\bP^{n-1}(\C)})$ with respect to the covering
$\{U_\ell\}$, $\ell=1 , \dots , n$.
\end{enumerate}
\end{proposition}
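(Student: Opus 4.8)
The plan is to reduce both claims to the local data already assembled, namely the change-of-coordinate isomorphisms $\Psi_{\ell m}$ of Proposition \ref{prop:iso}, the identification of coinvariant subalgebras in Proposition \ref{prop:b1=bl}, and the faithfully flat Hopf--Galois property established in Proposition \ref{prop:HG}; the gluing of these pieces into a sheaf that is a quantum principal bundle is then governed by \cite[Thm 4.8]{afl}, whose hypotheses have been arranged precisely, with the $\T_{\ell 1}$ playing the role of the $d_\ell$ and $a_{11}$ the quantum section.

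For part (1) I would first define $\cF$ on the basis of intersections: to $U_{i_1} \cap \cdots \cap U_{i_r}$ I assign the iterated localization $A_{i_1}[\T_{i_2 1}^{-1}, \dots, \T_{i_r 1}^{-1}]$, the restriction maps being the canonical localization maps (recall all the $\T_{\ell 1}$ are Ore). The content of the sheaf axiom is that on a double overlap $U_\ell \cap U_m$ the two presentations $A_\ell[\T_{m1}^{-1}]$ and $A_m[\T_{\ell 1}^{-1}]$ coincide, and this is exactly the isomorphism $\Psi_{\ell m}$ of Proposition \ref{prop:iso}. I would then verify the cocycle condition $\Psi_{mn} \circ \Psi_{\ell m} = \Psi_{\ell n}$ on triple overlaps from the explicit formulas \eqref{psi-lm}, so that the local pieces glue to a genuine sheaf. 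To upgrade this to a sheaf of $H$-comodule algebras, I would check that each coaction $\delta_\ell$ of \eqref{coac-n} and \eqref{coact-nu} extends to the localizations and is intertwined by the $\Psi_{\ell m}$; the extension is automatic because the localizing elements $\T_{\ell 1}$ are coacted on by the invertible grouplike $p_{11}$ (so $\delta_\ell(\T_{\ell 1}^{-1}) = \T_{\ell 1}^{-1} \otimes p_{11}^{-1}$ as in \eqref{coac-n2}), and the intertwining follows since the $\Psi_{\ell m}$ are built from the $\T_{j1}$ and their inverses.

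For part (2) I would check the two conditions of Definition \ref{qpb-def} in turn. Condition (2), that each $\cF(U_\ell) = A_\ell$ is a principal $H$-comodule algebra, is exactly Proposition \ref{prop:HG}. For condition (1), I must identify the coinvariants $\cF(U_\ell)^{\co H} = B_\ell$ with the structure-sheaf sections $\cO^q_{\bP^{n-1}(\C)}(U_\ell)$; this is a direct comparison of presentations, since $B_\ell$ is generated by the $\gb{\ell}_j = \T_{\ell 1}^{-1}\T_{j1}$ subject to the relations \eqref{comm-b}, which are precisely the defining relations \eqref{sheaf-pnc} of $\cO^q_{\bP^{n-1}(\C)}(U_\ell)$ under the correspondence $\gb{\ell}_j \leftrightarrow x^{\scriptscriptstyle{(\ell)}}_j$. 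Compatibility of this identification with the restriction maps on overlaps is then guaranteed by Proposition \ref{prop:b1=bl}.

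The step I expect to be the main obstacle is the simultaneous verification of the cocycle condition and the coaction-compatibility of the gluing isomorphisms: because this deformation differs substantially from the one in \cite{afl}, this is precisely where the construction could fail and where one genuinely needs the explicit commutation relations \eqref{comm-rel-nu}--\eqref{comm-rel-nu2} and the multiplicativity property $\mu_{ij}\mu_{ji}=1$, rather than a formal cohomological argument. Everything else is either a citation or a matching of generators and relations.
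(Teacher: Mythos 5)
Your proposal is correct and follows essentially the same route as the paper: the sheaf is defined on intersections by localization with the $\Psi_{\ell m}$ of Proposition \ref{prop:iso} serving as gluing/restriction data, the coinvariants $B_\ell$ are matched with $\cO^q_{\bP^{n-1}(\C)}(U_\ell)$ by comparing the relations \eqref{comm-b} with \eqref{sheaf-pnc}, and principality is Proposition \ref{prop:HG}. In fact you spell out several verifications (the cocycle condition on triple overlaps, the extension of the coaction to localizations via the grouplike $p_{11}$, and its compatibility with the $\Psi_{\ell m}$) that the paper explicitly leaves to the reader.
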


\begin{proof}
(1). On the intersection of two open sets $U_\ell$ and $U_m$
with $\ell <m$ we set 
$$
\cF (U_\ell \cap U_m):= A_\ell [a_{m 1}^{-1}] =: A_{\ell m} \,
$$
and define restriction morphisms 
\beq\label{restr-maps}
\rho_{\ell, \ell m }: A_{\ell} \hookrightarrow A_{\ell m}
\qquad \rho_{m, \ell m}: A_{m} \hookrightarrow A_m [a_{\ell 1}^{-1}] \stackrel{\Psi_{m  \ell}} \longrightarrow  A_\ell [a_{m 1 }^{-1}]
\eeq
given respectively by the algebra inclusion and by the composition of the algebra inclusion with the isomorphism  
$\Psi_{m \ell}: 
A_m[\T_{\ell 1}^{-1}]    \stackrel{\simeq}{\rightarrow} A_{\ell 1}[\T_{m 1}^{-1}]$
given in \eqref{psi-lm}. Similarly for multiple intersections. We leave to the reader the
checks involved in showing that these information define a unique sheaf for the
topology generated by $\{U_\ell\}$ (see \cite[Ch. 1]{eh}  for more details), together with
the checks for the claims regarding the $H$-comodule structure.

\medskip
(2).
We have to show that  $\cF (U_\ell)^{\co H}   \simeq \cO^q_{\bP^{n-1}(\C)}(U_\ell)$, where $\cF (U_\ell)^{\co H} = B_\ell$
is the algebra generated by the elements
$  \gb{\ell}_j$ in \eqref{coinv-gln}. This is a straightforward calculation.
The $H$-Galois property was shown in Prop. \ref{prop:HG}.
\end{proof}

\subsection{Quantum reduction of $\cF$}\label{sec:qrF}
We next  construct a reduction of the quantum principal bundle $\cF$. We define the Hopf algebra  
 \beq\label{H0}
 H_0:=\widetilde{\cO}_q(P) \slash (p_{1 s}, s =2, \dots , n) \,  =\widetilde{\cO}_q(\rGL(n))/(\T_{s 1}, \T_{1 s}, s =2, \dots , n) \, ;
 \eeq 
 we denote $pr_0: H \to H_0$  the quotient map 
and mantain the notation $p_{ij}$ for the images of the generators of $H$ in the quotient Hopf algebra.
We define a
 sheaf  $\cF_0$  of $H_0$-comodule algebras over the quantum ringed space $(\bP^{n-1}(\C), \cO^q_{\bP^{n-1}(\C)})$: on the open sets $\{U_\ell \}$, $\ell =1 , \dots , n$  we set
\beq\label{F0-GLn}
\cF_0(U_\ell):=   
A_\ell  \slash (\gn{\ell}_{1s}, s=2, \dots , n)=: A^0_\ell ~, \quad
\eeq
and on the intersection of two open sets $U_\ell$ and $U_m$ with $\ell< m$ we set 
$$\cF_0 (U_\ell \cap U_m):=  
A_{\ell m} \slash (\gn{m}_{1 s}, s=2, \dots , n) =: A^0_{\ell m} \, .$$

Since $\Psi_{m \ell}(\gn{m}_{1s})=\gn{\ell}_{1 s}$, see \eqref{psi-lm2},
the isomorphism  
$\Psi_{m \ell}$ descends to a well-defined isomorphism  
$\Psi^0_{m \ell}: A^0_m[\T_{\ell 1}^{-1}]    \stackrel{\simeq}{\rightarrow} A^0_{\ell}[\T_{m 1}^{-1}]
= A^0_{\ell m}$
and we thus have
restriction morphisms 
$$\rho^0_{\ell, \ell m}: A^0_{\ell} \hookrightarrow A^0_{\ell m}
\qquad \rho^0_{m, \ell m}: A^0_{m} \hookrightarrow A^0_m [a_{\ell 1}^{-1}] \stackrel{\Psi_{m \ell}} \longrightarrow  A^0_{\ell m}.$$ The sheaf $\cF^0$ is similarly defined on multiple intersections.

For each $\ell=1, \dots , n$, the $H$-comodule algebra $A_\ell=\cF(U_\ell)$ is also an $H_0$-comodule algebra with respect to the 
 induced right coaction $(id \otimes pr_0) \circ \delta_\ell$. As from \eqref{coact-nu}, $(\gn{\ell}_{1s}, s=2, \dots , n)$ is an
 $H_0$-subcomodule of $A_\ell$. Then the coaction $(id \otimes pr_0) \circ \delta_\ell: \cF(U_\ell)=A_\ell \to A_\ell \otimes H_0$ descends to 
a well-defined coaction on  
$\cF_0(U_\ell)$ and  $\cF_0$ is
a sheaf    
of $H_0$-comodule algebras:

 \begin{proposition}
The sheaf $\cF_0$ is a  quantum principal bundle (in the sense of Definition \ref{qpb-def}) over the quantum ringed space $(\bP^{n-1}(\C), \cO^q_{\bP^{n-1}(\C)})$ with respect to the covering $\{U_\ell \}$, $\ell =1 , \dots , n$. 
\end{proposition}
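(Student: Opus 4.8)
The plan is to verify the two conditions of Definition \ref{qpb-def} for $\cF_0$ on the cover $\{U_\ell\}$, following closely the proofs of Propositions \ref{prop:sheafF} and \ref{prop:HG}. Throughout I write $B^0_\ell:=(A^0_\ell)^{\co H_0}$ for the induced coaction $(\mathrm{id}\otimes pr_0)\circ\delta_\ell$, which on generators reads $\T_{k1}\mapsto\T_{k1}\otimes p_{11}$, $\T_{\ell 1}^{-1}\mapsto\T_{\ell 1}^{-1}\otimes p_{11}^{-1}$, $\gu{\ell}_{js}\mapsto\sum_{r=2}^{n}\gu{\ell}_{jr}\otimes p_{rs}$ and $D^{\pm1}\mapsto D^{\pm1}\otimes\tilde D^{\pm1}$, the classes $p_{1s}$ ($s\geq 2$) being now zero in $H_0$ (see \eqref{coac-n}, \eqref{coac-n2}, \eqref{coact-nu}). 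The sheaf structure and the $H_0$-comodule-algebra structure of $\cF_0$, together with the compatibility of the restriction maps $\Psi^0_{m\ell}$, have already been established in the construction; it remains to check (1) $B^0_\ell\cong\cO^q_{\bP^{n-1}(\C)}(U_\ell)$ and (2) that $B^0_\ell\subset A^0_\ell$ is a faithfully flat $H_0$-Galois extension.

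For (1) I would first note that the elements $\gb{\ell}_j=\T_{\ell 1}^{-1}\T_{j1}$ stay $H_0$-coinvariant in $A^0_\ell$ and still satisfy \eqref{comm-b}, so they generate a copy of $\cO^q_{\bP^{n-1}(\C)}(U_\ell)$ inside $B^0_\ell$; since no $\gb{\ell}_j$ involves the $\gn{\ell}_{1s}$, the projection $A_\ell\to A^0_\ell$ restricts to an injection $B_\ell\hookrightarrow B^0_\ell$ on a PBW basis. For the reverse inclusion I would argue as in Proposition \ref{prop:sheafF}(2) by decomposing $A^0_\ell$ into weight spaces for the two factors of the Levi $G_0=\rGL(1)\times\rGL(n-1)$: coinvariance forces both the $p_{11}$-weight and the $\rGL(n-1)$-corepresentation to be trivial, and on monomials in the generators above this leaves only polynomials in the $\gb{\ell}_j$ (the apparent candidates built from $\det_q$ of the $\gu{\ell}$-block and powers of $D^{\pm1}$ collapse to scalars). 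The cleanest instance is $\ell=1$, where $\gn{1}_{1s}=\T_{11}^{-1}\T_{1s}$ and the cleaving map $\gamma_1$ of the preceding lemma descends along $A_1\to A^0_1$ and $H\to H_0$ (as $\gamma_1(p_{1s})=\T_{1s}\mapsto 0$) to an $H_0$-comodule algebra map $\gamma_1^0\colon H_0\to A^0_1$; hence $A^0_1$ is a trivial extension, $A^0_1\cong B^0_1\otimes H_0$ as comodules, and $B^0_1=B_1=\cO^q_{\bP^{n-1}(\C)}(U_1)$ follows immediately.

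For (2) I would mimic Proposition \ref{prop:HG}. Since $H_0=\widetilde{\cO}_q(G_0)$ is the coordinate Hopf algebra of the reductive Levi, it is cosemisimple with bijective antipode, so by \cite[Th.~I]{sch90} it suffices to prove that the canonical map $\chi^0_\ell$ of $B^0_\ell\subset A^0_\ell$ is surjective, and for this, by the generator-reduction argument of Proposition \ref{prop:HG}, that $1\otimes h$ lies in its image for each generator $h$ of $H_0$. The elements $1\otimes p_{11}^{\pm1}$ and $1\otimes\tilde D^{-1}$ are hit exactly as in \eqref{mappa-can-inv}, via $\T_{\ell 1}^{\mp1}\otimes_{B^0_\ell}\T_{\ell 1}^{\pm1}$ and $D\otimes_{B^0_\ell}D^{-1}$. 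The remaining generators $p_{rs}$, $r,s=2,\dots,n$, come from the $(n-1)\times(n-1)$ matrix $\tilde u=(\gu{\ell}_{js})_{j\neq\ell,\,s\geq 2}$: because $\delta_\ell(\gu{\ell}_{js})=\sum_{r}\gu{\ell}_{jr}\otimes p_{rs}$, as soon as $\tilde u$ is invertible over $A^0_\ell$ one gets $\sum_j\chi^0_\ell\big((\tilde u^{-1})_{tj}\otimes_{B^0_\ell}\gu{\ell}_{js}\big)=1\otimes p_{ts}$. Cosemisimplicity then upgrades surjectivity to bijectivity and faithful flatness, yielding principality.

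The crux is the invertibility of $\tilde u$ over $A^0_\ell$, and this is precisely where the reduction $\gn{\ell}_{1s}=0$ pays off: in $A^0_\ell$ the unipotent factor in the matrix factorisation of $(\T_{ij})$ becomes the identity, so the $\ell$-th row is $(\T_{\ell 1},0,\dots,0)$ and the matrix is block triangular; expanding the quantum determinant along this row shows that $D$ equals $\T_{\ell 1}$ times $\det_q(\tilde u)$ up to an invertible scalar, whence $\det_q(\tilde u)$ is invertible and $\tilde u$ admits an inverse given by its quantum cofactor matrix. I expect this block-triangularity step, together with the verification that $H_0$ is cosemisimple, to be the only substantial points; the weight-space identification of $B^0_\ell$ and the equivariance of the $\Psi^0_{m\ell}$ are routine, the latter already recorded in the construction of $\cF_0$.
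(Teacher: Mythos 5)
Your proposal is correct and its overall plan coincides with the paper's, whose proof is deliberately terse: the identification $\cF_0(U_\ell)^{\co H_0}=\cF(U_\ell)^{\co H}\simeq\cO^q_{\bP^{n-1}(\C)}(U_\ell)$ is declared immediate by comparing \eqref{comm-b} with \eqref{sheaf-pnc}, and principality is stated to be ``analogous to Proposition \ref{prop:HG}'' and omitted. The genuine divergence is in how you put $1\otimes p_{rs}$, $r,s\geq 2$, into the image of the canonical map. The natural reading of ``analogous'' is to push the explicit preimages \eqref{mappa-can-inv} through the quotient map $\phi:A_\ell\to A^0_\ell$: since $\phi$ is an $H_0$-comodule algebra map, $\phi(\T_{\ell r})=0$ and $\phi(\T_{kr})=\gu{\ell}_{kr}$ for $r\geq 2$, the identity $\sum_k S(\T_{ik})\T_{kr}=\delta_{ir}1$ survives in $A^0_\ell$ and gives
$\chi^0_\ell\bigl(\sum_k\phi(S(\T_{ik}))\otimes_{B^0_\ell}\phi(\T_{kr})\bigr)=1\otimes p_{ir}$ for $i,r\geq 2$ with no further work. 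You instead re-prove invertibility of the block $\tilde u=(\gu{\ell}_{js})$ by factorizing the quantum determinant along the $\ell$-th row and invoking quantum cofactors; this is sound (the $\gu{\ell}_{js}$ satisfy the multiparametric quantum-matrix relations \eqref{comm-rel-nu}, so Sudbery's Laplace and cofactor identities apply, and $\det_q(\tilde u)$, being $\T_{\ell 1}^{-1}D$ up to a unit, is invertible), but it imports machinery the paper cites without developing. Note moreover that your invertibility claim follows for free from the antipode argument just sketched: the matrix $\phi(\T_{ij})$ is invertible over $A^0_\ell$ with inverse $\phi(S(\T_{ij}))$, it is block triangular with invertible corner $\T_{\ell 1}$, hence its complementary block $\tilde u$ is invertible --- no determinant expansion needed. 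Your treatment of the coinvariants (injectivity of $B_\ell\to B^0_\ell$, the Levi weight-space argument, and the descent of $\gamma_1$ to a cleaving map $\gamma^0_1$ for $\ell=1$) supplies detail where the paper says ``immediate'', and your appeal to cosemisimplicity of $H_0$ with bijective antipode plus Schneider's theorem is exactly the mechanism of Proposition \ref{prop:HG}; so each approach buys something: yours is self-contained in its verification of the coinvariants, the paper's implied route is lighter on the Hopf-Galois side.
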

\begin{proof}
By comparisons of \eqref{comm-b} and \eqref{sheaf-pnc} it is immediate to see that    
$$
\cF_0(U_\ell)^{\co H_0}=\cF(U_\ell)^{\co H} \simeq \cO^q_{\bP^{n-1}(\C)} (U_\ell).
$$
The proof that $\cF_0(U_\ell)$ is a principal  $H_0$-comodule algebra is analogue to that of Proposition \ref{prop:HG} and thus omitted. 
\end{proof}
\begin{proposition}\label{prop:ex-gln}
The quantum principal bundle $\cF_0$ is a quantum algebraic reduction of $\cF$  (in the sense of Definition \ref{qred-def}). 
\end{proposition}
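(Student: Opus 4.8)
The strategy is to check the two requirements of Definition~\ref{qred-def} for the common covering $\{U_\ell\}$, $\ell = 1, \dots, n$. Requirement (1) is already secured: Proposition~\ref{prop:sheafF} shows that $\cF$ is a quantum principal bundle for $H$ over $(\bP^{n-1}(\C), \cO^q_{\bP^{n-1}(\C)})$, and the preceding proposition does the same for $\cF_0$ and $H_0$, both relative to $\{U_\ell\}$. Moreover $\cF_0$ is constructed as a quotient of $\cF$, so the Hopf ideal $J = (p_{1s}, s = 2, \dots, n)$ with $H_0 = H/J$ is precisely the one appearing in the definition. It therefore remains to produce an $H_0$-comodule algebra morphism $\varphi: \cF \lra \cF_0$ realizing requirement (2).

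I would assemble $\varphi$ from local data. On each $U_\ell$ the natural candidate is the canonical projection
$$
\varphi_{U_\ell} : \cF(U_\ell) = A_\ell \lra A_\ell/(\gn{\ell}_{1s}, s = 2, \dots, n) = A^0_\ell = \cF_0(U_\ell),
$$
a surjective algebra morphism by construction. Its $H_0$-equivariance is immediate: since \eqref{coact-nu} shows that $(\gn{\ell}_{1s})$ is an $H_0$-subcomodule of $A_\ell$, the induced coaction $(id \otimes pr_0)\circ \delta_\ell$ descends to $A^0_\ell$ exactly so that $\varphi_{U_\ell}$ intertwines it with the coaction on $A^0_\ell$. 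Thus each $\varphi_{U_\ell}$ is an $H_0$-comodule algebra morphism.

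Next I would verify that the $\varphi_{U_\ell}$ glue into a morphism of sheaves, i.e. that they commute with restrictions on intersections $U_\ell \cap U_m$. The crucial input is the identity $\Psi_{m\ell}(\gn{m}_{1s}) = \gn{\ell}_{1s}$ from \eqref{psi-lm2}: this is precisely what makes $\Psi_{m\ell}$ descend to $\Psi^0_{m\ell}$, and consequently the square formed by $\varphi_{U_\ell}$, $\varphi_{U_m}$ and the two restriction maps commutes, since passing to the quotient by $(\gn{m}_{1s})$ on the $m$-side corresponds under $\Psi_{m\ell}$ to quotienting by $(\gn{\ell}_{1s})$ on the $\ell$-side. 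The analogous bookkeeping on multiple intersections then yields a well-defined $H_0$-comodule algebra morphism $\varphi : \cF \lra \cF_0$.

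Finally I would check the coinvariant condition $\varphi_{U_\ell}(\cF(U_\ell)^{\co H}) = \cF_0(U_\ell)^{\co H_0}$. Here $\cF(U_\ell)^{\co H} = B_\ell$ is generated by the elements $\gb{\ell}_j = \T_{\ell 1}^{-1}\T_{j1}$ of \eqref{coinv-gln}, and $\varphi_{U_\ell}$ sends these to their classes in $A^0_\ell$. Since the $\gb{\ell}_j$ are built from first-column entries while the ideal $(\gn{\ell}_{1s})$ is generated by the row-$\ell$ elements $\T_{\ell 1}^{-1}\T_{\ell s}$, the subalgebra generated by the images $\varphi_{U_\ell}(\gb{\ell}_j)$ is exactly the one that the preceding proposition identifies with $\cF_0(U_\ell)^{\co H_0} \cong B_\ell \cong \cO^q_{\bP^{n-1}(\C)}(U_\ell)$ through the matching of relations \eqref{comm-b} and \eqref{sheaf-pnc}. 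Hence $\varphi_{U_\ell}(B_\ell) = \cF_0(U_\ell)^{\co H_0}$, completing condition (2). The only point needing genuine (if mild) care is this last step --- confirming via the explicit presentation \eqref{Aell} that quotienting by $(\gn{\ell}_{1s})$ leaves the algebra of coinvariants unchanged; everything else is a direct assembly of results already established.
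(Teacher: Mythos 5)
Your proposal is correct and follows essentially the same route as the paper: the paper's (much terser) proof likewise takes the local quotient maps $\cF(U_j) \to \cF(U_j)/(\gn{j}_{1s})$ as the reduction morphism and cites \eqref{psi-lm2} for compatibility with the restriction maps. The extra details you spell out --- $H_0$-equivariance of the quotients and the coinvariance condition $\varphi(\cF(U_\ell)^{\co H}) = \cF_0(U_\ell)^{\co H_0}$ --- are exactly what the paper leaves implicit, the latter having been established in the preceding proposition.
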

\begin{proof}
The surjective $H_0$-comodule maps
$\varphi_{U}: \cF(U_j) \lra \cF_0(U_j)$, compatible with the restriction maps $\rho_{-,-}$ and  $\rho^0_{-,-}$ are given by the 
quotient maps
$
\varphi_{U}: \cF(U_j) \lra \cF_0(U_j) = \cF(U_j) \slash (\gn{j}_{1s}, s=2, \dots , n).
$ Cf. \eqref{psi-lm2}.
\end{proof}
\medskip

\begin{remark}  
The construction  we have  presented here  works more in general for a quadratic algebra defined by $n^2$ generators which have commutation relations as in \eqref{commGL-mu}, with $\mu_{i j}$ parameters of deformations satisfying the properties $\mu_{i j} \mu_{j i}=1$, for all $i,j=1, \dots , n$ and \eqref{prop2}. Indeed the computations only use this latter property of the parameters and not their explicit expression \eqref{mu-coeff}.

In particular it works for 
the deformation $\mathcal{O}_\theta (\rGL(n))$ of the algebra of coordinates on $\rGL(n)$ 
 introduced  by Connes and  Dubois--Violette in \cite{cdv}, $n >2$. This latter 
has generators $a_{i j}$, $i,j=1 , \dots , n $, which satisfy commutation relations that, formally, are written as in \eqref{commGL-mu}, with 
$\mu_{ij}:=exp(-2 i \pi \theta_{ij})$ for $\theta$ an antisymmetric matrix of real deformation parameters. 
\beq\label{thetagen}
\theta_{ij}=-\theta_{ji}=-\theta_{i' j} \,   , \quad i,j=1, \dots n .
\eeq
 with $j':=m+j$ for $j \leq m$ and $n=2m$ or $n=2m+1$ and,  in this last case, $j'=j$ for $j=2m+1$.
In this case 
$
\prod\limits_{\alpha=1}^{n} \mu_{\alpha k}= \prod\limits_{\alpha=1}^{m} \mu_{\alpha k} \prod\limits_{\alpha=m+1}^{2m} \mu_{\alpha k} = \prod\limits_{\alpha=1}^{m} \mu_{\alpha k}   \mu_{\alpha' k} =1
$
and thus the determinant  is central.  
It is only when requiring  the validity of Proposition \ref{prop:b1=bl}, that one has to impose the further restriction \eqref{prop2} on the parameters, reducing the number of parameters of deformation to just one. 
\qed
\end{remark}

\subsection{The $n=2$ case: quantum reduction from   $\widetilde{\cO}_q(\rGL(2))$.}
We detail here the case $n=2$, providing a noncommutative version of the sheaf reduction in Example \ref{example-red}. 
In this low-dimensional case the ringed  base  space $(\bP^1(\C), \cO_{\bP^1(\C)})$ is classical
and the sheaves $\cF$ and $\cF_0$ give examples of quantum locally cleft principal bundles.

 \medskip
 
Let  $A=\widetilde{\cO}_q(\rGL(2))$. Accordingly to \eqref{multi2},  it has generators $a$, $b$, $c$, $d$ and  $ D^{-1} $
which satisfy commutation relations
\begin{align} 
a b = q^{-1} \, b a \; , \quad  
a c  = q \, c a  \; , \quad 
c d = q^{-1}  \, d c \; , \quad 
b d = q \, d b     \; , \quad 
b c  = q^2  \, c b    \; , \quad 
a d =   d a   \, 
\end{align}
together with 
$$
a D^{\pm 1} = D^{\pm 1}  a \; , \quad 
b D^{\pm 1}  = q^{\pm 2}  D^{\pm 1}  b \; ,  \quad
c D^{\pm 1}  =q^{\mp 2}  D^{\pm 1} c \; , \quad 
d D^{\pm 1}  = D^{\pm 1}  d 
$$
for $ D = ad - q^{-1} bc$ the quantum determinant  and $D^{-1}$ its inverse.
Let   $H:= \widetilde{\cO}_q(P):=\widetilde{\cO}_q(\rGL(2))/(c)$ be the Hopf algebra obtained as quotient of $A$  for the Hopf algebra ideal generated by $c$. The induced  coaction  of $H$ on $\widetilde{\cO}_q(\rGL(2))$  is given on the  generators by
\beq\label{act-P2}
\delta: 
\begin{pmatrix} a & b \\ c & d 
\end{pmatrix} \mapsto \begin{pmatrix} a & b \\ c & d 
\end{pmatrix} \otimes \begin{pmatrix} t & n \\ 0 & s 
\end{pmatrix} \; , \quad D^{-1} \mapsto D^{-1}  \otimes \tilde{D}^{-1}
\eeq
and extended to the whole algebra as an algebra homorphism.
Here $t,n,s,\tilde{D}^{-1} \in \widetilde{\cO}_q(P)$, with $t(s \tilde{D}^{-1})= (s \tilde{D}^{-1})t=1$,  are respectively the images of the algebra generators $a,b,c, D^{-1} \in \widetilde{\cO}_q(\rGL(2))$ under the  quotient map $A \to H$. \\

We define algebra Ore extensions
\begin{align*}
 A_1 &:=A[a^{-1}]  
  \\ 
&  ~
\simeq  \C_q[a^{\pm 1},x:=a^{-1}b,c, D^{\pm 1}] \slash \big(
a^{\pm 1} x -q^{\mp 1}  x a^{\pm 1}  \, , \; 
c x - q^{-1} x c  \, , \;
a^{\pm 1} c - q^{\pm 1} ca^{\pm 1}  \, , 
\\  & \hspace{5,5cm} 
a^{\mp 1} a^{\pm 1}-1   \, , \;
a  D^{\pm 1} = D^{\pm 1}  a \, , \quad 
a^{-1}  D^{\pm 1} = D^{\pm 1}  a^{-1} \, , 
\\ & \hspace{5,5cm}
x D^{\pm 1}  = q^{\pm 2}  D^{\pm 1}  x \, ,  \quad
c D^{\pm 1}  =q^{\mp 2}  D^{\pm 1} c \big)
\end{align*}

\begin{align*}
A_2 &:=A[c^{-1}] 
\nn   \\ 
&  ~
\simeq \C_q[a,y:=c^{-1}d,c^{\pm 1}, D^{\pm 1}] \slash \big(
a c^{\pm 1} -q^{\pm 1}  c^{\pm 1}  a \, , \; 
c^{\pm 1} y -q^{\mp 1}  y c^{\pm 1}  \, , \;
ay- q^{-1} ya   \, , 
\\ & \hspace{5,5cm}
c^{\mp 1} c^{\pm 1}-1 \, , \;
a  D^{\pm 1} = D^{\pm 1}  a \, , \quad 
c D^{\pm 1}   =q^{\mp 2}  D^{\pm 1}  c  \, ,
\\ & \hspace{5,5cm}
c^{-1}  D^{\pm 1}   =q^{\pm 2}  D^{\pm 1}  c^{-1} \, , \;
y D^{\pm 1}  = q^{\pm 2} D^{\pm 1}  y ) \big)
\end{align*}
and
\begin{align*}
A_{12}&:=A_1[c^{-1}]\simeq A_2[a^{-1}] 
\end{align*}
with algebra isomorphism (cf. \eqref{change-coord})
\beq\label{iso-Psi}
\Psi:A_1[c^{-1}]\stackrel{\simeq}{\longrightarrow} A_2[a^{-1}]  \, , \qquad  
a^{\pm 1}   \mapsto  a^{\pm 1}  \, , \quad
c^{\pm 1} \mapsto    c^{\pm 1}  \, , \quad
  x  \mapsto    y \, , \quad
 D^{\pm 1} \mapsto   D^{\pm 1} \, .
 \eeq
The coaction  \eqref{act-P2}  extends, uniquely as an algebra map, to coactions
$\delta_i:A_i \lra A_i \otimes  \widetilde{\cO}_q(P)$ with
$$
\delta_1 (a^{-1})=a^{-1}\otimes t^{-1}~ , \qquad
\delta_2 (c^{-1})=c^{-1}\otimes t^{-1}~ , 
$$
for $t^{-1}:= s \tilde{D}^{-1}$, or
\beq\label{coac-xy}
\delta_1 (x)=  1 \otimes  t^{-1} n + x \otimes  t^{-1} s \; , \quad \delta_2 (y)= 1 \otimes  t^{-1} n + y \otimes  t^{-1} s~.
\eeq
The corresponding subalgebras  $B_i:=A_i^{\co \, \widetilde{\cO}_q(P)}$ of
coinvariant elements are  
$$
B_1=\C_q[a^{-1}c] \simeq \C[z], \qquad B_2=\C_q[ac^{-1}] \simeq \C[w]\; 
$$
and $B_{12}=\C_q[ac^{-1}, c a^{-1}] \simeq \C[z^{\pm 1}]$. 
In the algebras $A_1$, respectively $A_2$, we have
\beq\label{scomp1}
\begin{pmatrix} a & b \\ c & d \end{pmatrix} 
= 
\begin{pmatrix} 1 & 0 \\ c a^{-1} & 1  \end{pmatrix} 
\begin{pmatrix} a & b \\ 0 & a^{-1} D \end{pmatrix}
= 
\begin{pmatrix} 1 & 0 \\ c a^{-1} & 1  \end{pmatrix} 
\begin{pmatrix} a & 0 \\ 0 & a^{-1} D \end{pmatrix}
\begin{pmatrix} 1 & x \\ 0 & 1 \end{pmatrix}
\eeq
\beq\label{scomp2}
\begin{pmatrix} a & b \\ c & d \end{pmatrix} 
= 
\begin{pmatrix} a c^{-1}  & 1 \\ 1 & 0  \end{pmatrix} 
\begin{pmatrix} c & d \\ 0 & -q^{-1} c^{-1} D \end{pmatrix} 
= 
\begin{pmatrix} a c^{-1}  & 1 \\ 1 & 0  \end{pmatrix} 
\begin{pmatrix} c & 0 \\ 0 & -q^{-1} c^{-1} D \end{pmatrix} 
\begin{pmatrix} 1 & y \\ 0 & 1 \end{pmatrix}.
\eeq 
The algebras $A_i$ are also $H_0$-comodule algebras for the 
 induced right coactions $(id \otimes pr_0) \circ \delta_i$ of the Hopf algebra $H_0:=H \slash (n)$, the quotient of $H$ by the Hopf algebra ideal generated by $n$, with  $pr_0: H \to H_0$ the projection map. Notice that the algebra $H_0$ is commutative with all elements which are group-like. We keep denoting $t^{\pm 1}, ~ D^{\pm 1}$ the images of the generators in $H_0$.
\medskip

We consider the topology $\mathcal{U}$ induced by the base $\{U_1, U_2,U_{12}:=U_1\cap U_2\}$ with $U_1$ and $U_2$ being the subsets defined in the Example \ref{example-red}.
We obtain a  sheaf $\cF$ of $\widetilde{\cO}_q(P)$-comodule algebras on the ringed space $(\bP^1(\C),
\cO_{\bP^1(\C)})$ by setting
\beq\label{F-SL}
\cF(\bP^1(\C))= A ~, \quad 
\cF(U_1)=A_1~, \quad 
\cF(U_2)=A_2~, \quad 
\cF(U_{12})=A_{12} 
~, 
\eeq
and $\cF(\emptyset):=\{0\}$ (the  one element algebra).  The
 restriction morphisms $\rho_{U,V}: \cF(U) \to \cF(V)$,  for $V \subseteq U \in \mathcal{U}$,  are all given by algebra inclusions $\imath$, 
except for $ \rho_{U_2,U_{12}}:= i \circ \Psi$ given by the composition with 
 the $H$-comodule algebra isomorphism $\Psi$ in \eqref{iso-Psi}.

 \begin{proposition}\label{prop:Floc-cleft}
The sheaf $\cF$ is a  quantum locally cleft  principal bundle (in the sense of Definition \ref{qpb-def}) over the ringed space $(\bP^1(\C),
\cO_{\bP^1(\C)})$ with respect to the covering $\{U_1, U_2\}$. 
\end{proposition}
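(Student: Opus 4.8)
The plan is to verify the two defining conditions of a quantum principal bundle from Definition \ref{qpb-def} for the cover $\{U_1,U_2\}$, and then to upgrade this to local cleftness by exhibiting cleaving maps on each chart. Since $\cF$ has already been assembled as a sheaf of $\widetilde{\cO}_q(P)$-comodule algebras in \eqref{F-SL}, with restriction maps built from the isomorphism $\Psi$ of \eqref{iso-Psi}, the remaining content is purely local, on $A_1=\cF(U_1)$ and $A_2=\cF(U_2)$.

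First I would record that the coinvariants match the structure sheaf: the computations above give $\cF(U_1)^{\co H}=B_1=\C[z]$ and $\cF(U_2)^{\co H}=B_2=\C[w]$, while by \eqref{sheaf-pnc} (for $n=2$) each $\cO^q_{\bP^{n-1}(\C)}(U_\ell)$ is a polynomial ring in one variable; so condition (1) of Definition \ref{qpb-def} holds, the base being classical. For condition (2), principality of each $A_\ell$, I would reuse the argument of Proposition \ref{prop:HG}: as $H=\widetilde{\cO}_q(P)$ is cosemisimple with bijective antipode, it suffices to prove surjectivity of the canonical map, and by left $A_\ell$-linearity and multiplicativity it is enough to hit $1\otimes h$ for the algebra generators $t^{\pm1},s^{\pm1},n,\tilde D^{\pm1}$ of $H$. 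On $U_2$, for instance, $\chi_2(c^{-1}\otimes_{B_2}c)=1\otimes t$ and $\chi_2(D^{-1}\otimes_{B_2}D)=1\otimes\tilde D$, while $1\otimes n$ is recovered from $\chi_2(c^{-1}\otimes_{B_2}d)=1\otimes n+y\otimes s$ after subtracting the image of $1\otimes s$; the chart $U_1$ is entirely analogous.

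It remains to prove local cleftness. On $U_1$ this is immediate: the cleaving map of the lemma above, specialised to $n=2$ (namely $t\mapsto a$, $n\mapsto b$, $s\mapsto a^{-1}D=d-ca^{-1}b$, $\tilde D^{-1}\mapsto D^{-1}$, extended multiplicatively), is an $H$-comodule \emph{algebra} map, so $B_1\subset A_1$ is in fact trivial, hence cleft. On $U_2$ I would read a candidate cleaving map directly off the factorisation \eqref{scomp2}, setting $\gamma_2(t)=c$, $\gamma_2(n)=d$, $\gamma_2(s)=-q^{-1}c^{-1}D$ and $\gamma_2(\tilde D^{-1})=D^{-1}$; one checks at once from \eqref{act-P2} and \eqref{coac-xy} that this is an $H$-comodule map on the generators. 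What must then be shown is convolution invertibility, after extending $\gamma_2$ comodule-linearly along the PBW basis $\{t^is^kn^j\}$ of $H$.

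This last point is where I expect the main obstacle. Unlike $\gamma_1$, the map $\gamma_2$ is \emph{not} an algebra map: since $ts=st$ in $H$ whereas $\gamma_2(t)\gamma_2(s)=-q^{-1}D$ and $\gamma_2(s)\gamma_2(t)=-qD$ (using $Dc=q^2cD$), the grouplike relation already fails unless $q^2=1$. Thus $U_2$ is at best cleft, not trivial, and one cannot shortcut via triviality. The cleanest resolution is to verify instead the equivalent normal-basis property (criterion (3) of the cleft dictionary recalled before Definition \ref{princ-def}): using \eqref{scomp2} one exhibits the multiplication map $B_2\otimes\gamma_2(H)\to A_2$ as an isomorphism of left $B_2$-modules and right $H$-comodules, where $\gamma_2(H)$ is the span of the images of the monomials $t^is^kn^j$, concretely the normal-ordered monomials in $c^{\pm1}$, $D^{\pm1}$ and $d$. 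Showing that these form a free $B_2$-basis of $A_2$ carrying the regular $H$-comodule structure (equivalently, writing down a convolution inverse $\bar\gamma_2$ explicitly) is the technical heart; combined with the $H$-Galois property already secured, it yields cleftness of $B_2\subset A_2$ and completes the proof.
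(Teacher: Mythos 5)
Your overall strategy coincides with the paper's: on $U_1$ the algebra-map cleaving map read off from \eqref{scomp1} makes $B_1\subset A_1$ a trivial extension, and on $U_2$ one must produce a genuinely non-multiplicative cleaving map from the factorisation \eqref{scomp2}; your observation that $\gamma_2$ cannot be an algebra map, since $\gamma_2(t)\gamma_2(s)=-q^{-1}D$ while $\gamma_2(s)\gamma_2(t)=-qD$, is exactly the point the paper makes when it calls the second chart ``more involved''. (Your separate verification of principality via surjectivity of the canonical map is correct but redundant: cleftness implies principality, which is how the paper proceeds.)

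There is, however, a genuine gap, and it sits precisely at what you yourself call ``the technical heart''. First, since $\gamma_2$ is not an algebra map, prescribing it on the generators $t,n,s,\tilde{D}^{\pm 1}$ does not define it at all, and checking the comodule property ``at once on the generators'' proves nothing; you must give a formula on a whole linear basis of $H$ --- the paper sets $\gamma_2(\tilde{D}^p t^m n^r)=(-1)^p q^p D^p c^m d^r$ on the basis $\{\tilde{D}^p t^m n^r\}$, see \eqref{cleaving2-gl2} --- and then verify $(\gamma_2\otimes id)\circ\Delta=\delta_2\circ\gamma_2$ on every basis monomial. This requires the $q$-binomial identities $\Delta(n^r)=\sum_{j=0}^r\binom{r}{j}q^{j(r-j)}\,t^j n^{r-j}\otimes\tilde{D}^{r-j}t^{j-r}n^j$ and $\delta_2(d^r)=\sum_{j=0}^r\binom{r}{j}q^{j(r-j)}\,c^j d^{r-j}\otimes\tilde{D}^{r-j}t^{j-r}n^j$, both proved by induction; none of this follows from \eqref{act-P2} and \eqref{coac-xy} alone. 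Second, neither convolution invertibility nor your normal-basis alternative is actually carried out: the paper does this in Appendix \ref{app:A} by exhibiting the explicit inverse $\bar{\gamma}_2(\tilde{D}^p t^m n^r)=(-1)^p q^{-p+r(r-1)}\,d^r c^{-m} D^{-r-p}$ and reducing both convolution identities to the vanishing of the alternating sum $\sum_{j=0}^r\binom{r}{j}(-1)^j$ for $r\neq 0$. Without the basis-wise definition of $\gamma_2$, the inductive coproduct and coaction formulas, and an explicit $\bar{\gamma}_2$ (or an equally concrete normal-basis argument), the cleftness of $B_2\subset A_2$ --- and hence the proposition --- has not been proved.
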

\begin{proof}
We  show  that $\cF$ is locally cleft. 
  The  algebra extension $B_1 \subset A_1$  is a trivial $\widetilde{\cO}_q(P)$-Galois extension with (unital)
cleaving map which is given on the generators by 
\beq\label{cleaving1-gl2}
\gamma_1: \widetilde{\cO}_q(P)  \to A_1 \; , \qquad  
\tilde{D}^{\pm 1} \mapsto D^{\pm 1} \, , \quad
t^{\pm 1} \mapsto a^{\pm 1} \, , \quad  
n \mapsto a x =b  \;
\eeq
or (cf.  \eqref{scomp1}) in matrix form by
$
\gamma_1 :  \begin{pmatrix} t & n \\ 0 & t^{-1} \tilde{D} \end{pmatrix} 
\mapsto
\begin{pmatrix} a & b \\ 0 & a^{-1} D \end{pmatrix}
$.
Indeed, an easy computation on the generators shows that $\gamma_1$ can consistently be extended as an algebra map to the whole of $\widetilde{\cO}_q(P)$. It
 is a convolution invertible  $\widetilde{\cO}_q(P)$-comodule map with inverse  $\bar{\gamma}_1: \widetilde{\cO}_q(P)  \to A_1$ given by the algebra map  
 $\bar{\gamma}_1 := \gamma_1 \circ S$ for $S$ the antipode of $P$.

As for the  other chart, the computation is more involved since the cleaving map is not an algebra map. 
The extension $B_2 \subset A_2$  is cleft   with (unital)
cleaving map  which is given on the vector space basis  
$\{\tilde{D}^p t^m  n^{r}  , ~ m,p \in   \mathbb{Z}, r \in   \mathbb{N}\}$ of $\widetilde{\cO}_q(P)$ by
\beq\label{cleaving2-gl2}
\gamma_2: \widetilde{\cO}_q(P)  \to A_2 \; , \qquad  
\tilde{D}^p t^m   n^{r} \mapsto  (-1)^p q^p  D^p c^{m} d^r  
\eeq
and on the algebra generators reads 
\beq
\gamma_2 :  \begin{pmatrix} t & n \\ 0 & t^{-1} \tilde{D} \end{pmatrix} 
\mapsto
\begin{pmatrix} c & d \\ 0 & -q^{-1} c^{-1} D \end{pmatrix}
\eeq
(cf. \eqref{scomp2}). We postpone to Appendix  \ref{app:A} the proof that $\gamma_2$ is a convolution invertible $\widetilde{\cO}_q(P)$-comodule map.
\end{proof}

We  define another sheaf  $\cF_0$  over  $\bP^1(\C)$
 by setting 
\begin{align}\label{F0-SL}
&\cF_0(U_1):=\C_q[a,c,a^{-1}, D^{\pm 1}] \subset \cF(U_1) ~, \quad
&&\cF_0(U_2):=\C_q[a,c, c^{-1}, D^{\pm 1}] \subset \cF(U_2) ~, \nonumber \\ 
&\cF_0(U_{12}):= \C_q[a,c, a^{-1},c^{-1}, D^{\pm 1}] \subset \cF(U_{12}) ~, \quad
&&\cF_0(\bP^1(\C))=\widetilde{\cO}_q(\rGL(2)) \; 
\end{align}
with restriction morphisms $\rho^0_{U,V}$ given by algebra inclusions.
Observe that
$$
 \cF_0(U_1) \simeq \cF(U_1) \slash (x) \, , \quad
 \cF_0(U_2)\simeq  \cF(U_2) \slash (y)\, 
$$
with $(x)$, respectively $(y)$,  $H_0$-subcomodules of $\cF(U_1)$, respectively $\cF(U_2)$. Indeed, from \eqref{coac-xy}, one has 
$(id \otimes pr_0) \circ \delta_i(x) =   x \otimes  t^{-1} s$ and $(id \otimes pr_0) \circ \delta_i(y)=   y \otimes  t^{-1} s$. Then,  the 
maps $(\varphi_i \otimes id)\circ (id \otimes pr_0) \circ \delta_i$, for
\beq\label{mappe-phi}
\varphi_1:\cF(U_1) \lra \cF_0(U_1)  \simeq \cF(U_1) \slash (x)
\; , \quad
\varphi_2:\cF(U_2) \lra \cF_0(U_2)  \simeq \cF(U_2) \slash (y)
\eeq
the quotient maps,
descend to well-defined coactions    
$\delta_i^0: \cF_0(U_i) \to  \cF_0(U_i)  \otimes H_0 $ and  $\cF_0$ is
a sheaf    
of $H_0$-comodule algebras.  Analogously for $U_{12}$, with
\beq\label{mappe-phi2}
\varphi_{12}:\cF(U_{12}) \lra \cF_0(U_{12})  \simeq \cF(U_{12}) \slash (x) ~.
\eeq

 \begin{proposition}
The sheaf $\cF_0$ is a  locally cleft   quantum principal bundle (in the sense of Definition \ref{qpb-def}) over the ringed space $(\bP^1(\C),
\cO_{\bP^1(\C)})$ with respect to the covering $\{U_1, U_2\}$. 
\end{proposition}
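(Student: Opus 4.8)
The plan is to verify the two requirements of Definition \ref{qpb-def} for the covering $\{U_1,U_2\}$ and, in addition, to exhibit a cleaving map on each chart; since a cleft extension is automatically principal, requirement~(2) will come for free once cleftness is in hand. The structural observation that organizes the whole argument is that $H_0=H/(n)$ is commutative with \emph{all} of its generators group-like: writing $\tilde D=ts$, it is the group algebra $H_0\cong\C G$ of the rank-two lattice $G=\langle t,\tilde D\rangle\cong\mathbb{Z}^2$. Consequently the induced coactions $\delta_i^0$ of \eqref{coac-xy} make each $\cF_0(U_i)$ into a $G$-graded (weight-decomposed) algebra, and both surviving checks reduce to bookkeeping with these weights.

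First I would settle requirement~(1). Reading off \eqref{act-P2} and \eqref{coac-xy}, every generator is a weight vector: $a$ and $c$ have weight $t$, the inverses $a^{-1}$ (on $U_1$) and $c^{-1}$ (on $U_2$) have weight $t^{-1}$, and $D^{\pm1}$ has weight $\tilde D^{\pm1}$. Since each $\cF_0(U_i)$ carries a basis of ordered monomials in these $q$-commuting generators, its $H_0$-coinvariant part is spanned by the monomials of trivial total weight. On $U_1$ these are exactly the powers of $a^{-1}c$, and on $U_2$ the powers of $ac^{-1}$, so that $\cF_0(U_i)^{\co H_0}=B_i\cong\C[z]$ (resp.\ $\C[w]$), which is precisely $\cO_{\bP^1(\C)}(U_i)$. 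This yields requirement~(1).

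Next I would produce the cleaving maps chart by chart, mirroring \eqref{cleaving1-gl2}--\eqref{cleaving2-gl2} of Proposition \ref{prop:Floc-cleft}. On $U_1$ set $\gamma_1^0:H_0\to\cF_0(U_1)$, $t\mapsto a$, $\tilde D\mapsto D$. Because $a$ and $D$ commute and are invertible in $\cF_0(U_1)$ and $H_0$ is commutative, this extends to an algebra map; it is also an $H_0$-comodule map, since $a$ and $D$ are weight eigenvectors for the group-likes $t$ and $\tilde D$. Thus $\gamma_1^0$ is a comodule algebra map and the extension $B_1\subset\cF_0(U_1)$ is even \emph{trivial}, in particular cleft.

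The only genuine subtlety is the chart $U_2$, where---exactly as for $\gamma_2$ in \eqref{cleaving2-gl2}, whose convolution invertibility was deferred to Appendix \ref{app:A}---the natural candidate fails to be an algebra map because $c$ and $D$ do not commute. I would define it on the group-like basis by $\gamma_2^0(\tilde D^{\,p}t^{m})=(-1)^{p}q^{p}D^{p}c^{m}$ (equivalently, as the unique map with $\gamma_2^0\circ pr_0=\varphi_2\circ\gamma_2$, using \eqref{mappe-phi}). This is precisely where reduction to the Levi makes the reduced case far easier than Proposition \ref{prop:Floc-cleft}: over the \emph{cocommutative} group algebra $H_0$ both required properties are immediate. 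Indeed $\gamma_2^0$ is a comodule map since $\delta_2^0(D^{p}c^{m})=D^{p}c^{m}\otimes\tilde D^{\,p}t^{m}$ sends each group-like to a weight eigenvector, and it is convolution invertible because each group-like is sent to the \emph{invertible} element $(-1)^{p}q^{p}D^{p}c^{m}$ of $\cF_0(U_2)$, so that $\bar\gamma_2^0(\tilde D^{\,p}t^{m}):=\gamma_2^0(\tilde D^{\,p}t^{m})^{-1}$ supplies the convolution inverse directly (no analogue of the Appendix \ref{app:A} computation is needed). Hence $B_2\subset\cF_0(U_2)$ is cleft. As cleft extensions are principal, requirement~(2) follows, and together with requirement~(1) and the covering $\{U_1,U_2\}$ this shows $\cF_0$ is a locally cleft quantum principal bundle. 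The main obstacle, namely the non-algebra-map cleaving map on the second chart, thus dissolves exactly because passing to the reduction replaces $H$ by the group algebra $H_0$.
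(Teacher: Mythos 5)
Your proposal is correct and follows essentially the same route as the paper: the same coinvariant subalgebras $\C_q[a^{-1}c]\simeq\C[z]$ and $\C_q[ac^{-1}]\simeq\C[w]$, the same cleaving maps $\gamma_1^0$ (an algebra map, giving a trivial extension on $U_1$) and $\gamma_2^0(\tilde D^{\,p}t^m)=(-q)^pD^pc^m$ on $U_2$, with cleftness then yielding principality. Your only departures are presentational improvements: the explicit $\mathbb{Z}^2$-grading argument for the coinvariants (which the paper leaves as ``easy to see''), and observing that over the group algebra $H_0$ convolution invertibility follows at once from invertibility of the images of group-likes, whose pointwise inverses are exactly the paper's stated $\bar\gamma_2^0(\tilde D^{\,p}t^m)=(-q)^{-p}c^{-m}D^{-p}$.
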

\begin{proof}
It is easy to see that    
$$
\cF_0(U_1)^{\co H_0}=\C_q[a^{-1}c] \simeq \C[z], \qquad \cF_0(U_2)^{\co H_0}=\C_q[ac^{-1}] \simeq \C[w]
$$
are the subalgebras of  coinvariant elements.
The  algebra extension $\cF_0(U_1)^{\co H_0} \subset \cF_0(U_1)$ is trivial
with (unital) cleaving map which is given by the algebra  map
$$
\gamma^0_1: H_0 \to \cF_0(U_1) \; , \qquad  
\tilde{D}^{\pm 1} \mapsto D^{\pm 1} \, , \quad
t^{\pm 1} \mapsto a^{\pm 1}  $$
or in matrix form (cf.  \eqref{scomp1})
\beq
\gamma_1^0 :  \begin{pmatrix} t & n \\ 0 & t^{-1} \tilde{D} \end{pmatrix} 
\mapsto
\begin{pmatrix} a & 0 \\ 0 & a^{-1} D \end{pmatrix}
\eeq
Recalling that the algebra $H_0$ is commutative with all elements which are group-like, it is immediate to see that $\gamma^0_1$ 
 is a convolution invertible  $H_0$-comodule map.

The  algebra extension $\cF_0(U_2)^{\co H_0} \subset \cF_0(U_2)$ is cleft
with (unital) cleaving map which is given on the vector space basis  
$\{ \tilde{D}^p t^m , ~ m,p \in   \mathbb{Z} \}$ of $H_0$  by
\beq
\gamma^0_2: H_0 \to \cF_0(U_2) \; , \qquad  
\tilde{D}^p t^m   \mapsto (-q)^p D^p c^{m}  
\eeq
and reading   (cf. \eqref{scomp2})
\beq
\gamma_2^0 :  \begin{pmatrix} t & n \\ 0 & t^{-1} \tilde{D} \end{pmatrix} 
\mapsto
\begin{pmatrix} c & 0 \\ 0 & -q^{-1} c^{-1} D \end{pmatrix}
\eeq
on the algebra generators.
 It is  an $H_0$-comodule map
$$
(\gamma_2^0 \otimes id) \Delta (\tilde{D}^p t^m)= 
(-q)^p D^p c^{m} \otimes \tilde{D}^p t^m
=
(-q)^p \delta_2^0 (D^p c^{m} )
=
\delta_2^0 \big( \gamma_2^0(\tilde{D}^p t^m) \big) 
$$
with convolution inverse 
$\bar{\gamma}_2^0:  \,   \tilde{D}^p t^m   \mapsto (-q)^{-p} c^{-m} D^{-p} $.
\end{proof}

\begin{proposition}
The quantum principal bundle $\cF_0$ is a quantum algebraic reduction of $\cF$  (in the sense of Definition \ref{qred-def}). 
\end{proposition}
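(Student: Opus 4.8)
The plan is to produce explicitly the $H_0$-comodule algebra morphism $\varphi:\cF\to\cF_0$ demanded by condition (2) of Definition \ref{qred-def}, and to check it glues to a morphism of sheaves satisfying the coinvariants condition. Condition (1) is already secured: both $\cF$ and $\cF_0$ were shown to be (locally cleft) quantum principal bundles over $(\bP^1(\C),\cO_{\bP^1(\C)})$ with respect to the very same covering $\{U_1,U_2\}$, in Proposition \ref{prop:Floc-cleft} and in the proposition immediately following it. So I would reduce everything to exhibiting the morphism and verifying its properties chart by chart.

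First I would take as local components of $\varphi$ the quotient maps $\varphi_1,\varphi_2$ of \eqref{mappe-phi} and $\varphi_{12}$ of \eqref{mappe-phi2}, that is $\varphi_1:\cF(U_1)\to\cF(U_1)/(x)$, $\varphi_2:\cF(U_2)\to\cF(U_2)/(y)$ and $\varphi_{12}:\cF(U_{12})\to\cF(U_{12})/(x)$. Being quotient maps, these are surjective \emph{algebra} morphisms, which is what an algebraic reduction requires. That they are $H_0$-comodule maps is immediate and essentially tautological: the $H_0$-coactions $\delta_i^0$ on $\cF_0(U_i)$ were defined precisely as the descent of $(id\otimes pr_0)\circ\delta_i$ through $\varphi_i$, so $\delta_i^0\circ\varphi_i=(\varphi_i\otimes id)\circ(id\otimes pr_0)\circ\delta_i$, which is exactly the intertwining property with $\cF(U_i)$ carrying the induced $H_0$-coaction.

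The hard part, and the only step requiring genuine care, is checking that the $\varphi_i$ are compatible with the restriction morphisms, so that they assemble into a single sheaf morphism. The square over $U_1\supseteq U_{12}$ commutes trivially, since there all maps are inclusions or quotients by the same ideal $(x)$. The nontrivial square is the one over $U_2\supseteq U_{12}$, where the restriction $\rho_{U_2,U_{12}}$ factors through the change-of-chart isomorphism $\Psi$ of \eqref{iso-Psi}. The key observation that forces commutativity is that $\Psi$ interchanges the generators $x$ and $y$ (equivalently $\Psi_{21}(\gn{2}_{12})=\gn{1}_{12}$, cf. \eqref{psi-lm2}): hence $\Psi$ carries the ideal $(y)$ onto the ideal $(x)$ and descends to the restriction $\rho^0_{U_2,U_{12}}$ of $\cF_0$, giving $\varphi_{12}\circ\rho_{U_2,U_{12}}=\rho^0_{U_2,U_{12}}\circ\varphi_2$.

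Finally I would verify the coinvariants condition $\varphi_i\big(\cF(U_i)^{\co H}\big)=\cF_0(U_i)^{\co H_0}$. On $U_1$ both sides equal $\C_q[a^{-1}c]$, and since the generator $a^{-1}c$ lies outside the ideal $(x)$ and is fixed by $\varphi_1$, the equality is immediate; the argument on $U_2$ is identical with $ac^{-1}$ in place of $a^{-1}c$. This establishes that $\varphi$ is an algebraic reduction in the sense of Definition \ref{qred-def}. I would note in closing that the whole statement is the $n=2$ instance of Proposition \ref{prop:ex-gln}, so one could alternatively just invoke that result; the value of the explicit check here is that it makes the noncommutative analogue of Example \ref{example-red} fully transparent.
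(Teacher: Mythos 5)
Your proof follows essentially the same route as the paper's: the paper likewise takes the quotient maps $\varphi_1,\varphi_2,\varphi_{12}$ of \eqref{mappe-phi}--\eqref{mappe-phi2} as the local components, observes that they are by construction surjective $H_0$-comodule algebra maps, checks the two commutative squares (the nontrivial one over $U_2\supseteq U_{12}$ precisely via the isomorphism $\Psi$ of \eqref{iso-Psi}), and records the coinvariance condition $\varphi_i\big(\cF(U_i)^{\co H}\big)=\cF(U_i)^{\co H_0}$. The only detail you leave implicit is the component of $\varphi$ over the full space $\bP^1(\C)\in\mathcal{U}$, which the paper takes to be the identity map $\cF(\bP^1(\C))\to\cF_0(\bP^1(\C))$; otherwise your argument matches the paper's, and is in fact more explicit about why $\Psi$ carries the ideal $(y)$ onto $(x)$, forcing the second square to commute.
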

\begin{proof}
We show there exist surjective $\widetilde{\cO}_q(T)$-comodule maps
$\varphi_U: \cF(U) \lra \cF_0(U)$,
for all $U \in \mathcal{U}$, that are compatible with the restriction maps $\rho_{-,-}$ and  $\rho^0_{-,-}$  of the sheaves $\cF$ and  $\cF_0$, respectively. That is, 
$$
 \varphi_V \circ \rho_{U,V} =\rho^0_{U,V} \circ \varphi_U
$$
for all $V \subseteq U \in \mathcal{U}$,
for $\Psi$ the $H$-comodule algebra isomorphism in \eqref{iso-Psi}.

For $U=\bP^1(\C)$ we take the identity map $id: \cF(\bP^1(\C))  \to \cF_0(\bP^1(\C))$.
By construction, the quotient maps
$\varphi_1, ~\varphi_2$ in \eqref{mappe-phi}  and $\varphi_{12}$ in \eqref{mappe-phi2}  
are surjective $H_0$-comodule algebra maps, 
$
 (\varphi_{i} \otimes id) \circ (id \otimes pr_0) \circ  \delta  =   \delta_0 \circ \varphi_{i} 
$ and make the following diagrams commutative
$$
\xymatrix{
  \cF(U_1) \ar@{->}_{\rho_{U_1,U_{12}}}[d]  \ar@{->>}^{\varphi_1}[r]&  \cF_0(U_1) \ar@{->}^{\rho^0_{U_1,U_{12}}}[d]  \\
 \cF(U_{12}) \ar@{->>}^{\varphi_{12}}[r]   & \cF_0(U_{12})
}
\qquad  \quad
\xymatrix{
  \cF(U_2) \ar@{->}_{\rho_{U_2,U_{12}= i \circ \Psi}}[d]  \ar@{->>}^{\varphi_2}[r]&  \cF_0(U_2) \ar@{->}^{\rho^0_{U_2,U_{12}}}[d]  \\
 \cF(U_{12}) \ar@{->>}^{\varphi_{12}}[r]   & \cF_0(U_{12})
} \, .
$$
Moreover,   $\varphi_{i}(\cF(U_i)^{\co H})=\cF(U_i)^{\co H_0}$ for the two open sets $U_1,~U_2$.  
 \end{proof}
 
\section{Quantum sheaf algebraic Reductions}\label{red-sec}

In this section we provide a sufficient condition for the existence of reductions of quantum principal bundles,  yielding them directly,  based
on a sheaf theoretic reinterpretation of the result in \cite{gunther}.

\subsection{Quantum affine algebraic Reductions}\label{sec:gunther}

We recall some results about affine reductions, in particular
the Hopf-Galois reduction Theorem \ref{hg-thm}. The main reference is
\cite{gunther}, see also \cite{br-red, hajac-red}.  
\medskip

As before, let $H$ be a Hopf algebra  with bijective antipode and $H_0:=H/J$ its quotient by a 
 Hopf ideal $J$, with $pr_0:H \lra H_0=H/J$ denoting the quotient map. Let 
 $$
{}^{\co H_0}H=\{ k \in H ~|~ pr_0(\one{k}) \otimes \two{k}=
1 \otimes k \in H_0 \otimes H \}
$$
be
the subalgebra of coinvariants of $H$ with respect to the
left coaction $(pr_0 \otimes \mathrm{id}) \circ \Delta$ of $H_0$. We assume ${}^{\co H_0}H \subset H$ to be a
principal $H_0$-Galois extension (and hence $H$ is coflat as $H_0$--comodule).

The Hopf-Galois Reduction Theorem 
is a noncommutative algebraic counterpart of Prop.
\ref{thm-class-red}. It was proven
in \cite[Thm. 4]{gunther}, as a generalization of \cite[Thm. 3.2]{Sch}
for Galois objects.
It gives a   correspondence between algebraic $H_0$-reductions
of a $H$--Galois
extension $B:=A^{\co H} \subset A$ (in the sense of Def. \ref{affine-red}) and 
certain algebra maps from the subalgebra of coinvariants ${}^{\co H_0}H$ to
the centralizer
subalgebra of $B$ in $A$.
In order to state the result, let us introduce some notation and
preliminary results.

Recall that, for  any $H$--Galois extension $B \subset A$,
the centralizer subalgebra of $B$ in $A$,
\beq\label{centralizer}
Z_A(B):= \{x \in A ~ | ~ x b=b x , ~ \forall b \in B\}
\eeq
is a right $H$-module algebra and $H$-comodule algebra.  The action is given by  the so-called Miyashita--Ulbrich action
(see \cite[\S 3]{dt}):
\beq\label{MU-action}
\triangleleft_{MU} : Z_A(B) \otimes H \to Z_A(B) \; , \quad  x \otimes h \mapsto x
\triangleleft_{MU} h := \tone{h} x \ttwo{h}
\eeq
where     $\tone{h}$ and  $\ttwo{h}$ denote the components of the translation map $\tau$ of
the extension $B \subset A$, $\tau(h):= \tone{h} \otimes_B \ttwo{h} \in A \otimes_B A$
(see \S\ref{qpb-sec}, (\ref{transl-map})).
Being  $\tau$ valued on the balanced tensor product, the action \eqref{MU-action} 
is indeed well-defined only on the centralizer subalgebra $Z_A(B) \subseteq A$.
The right coaction is given by the restriction to $Z_A(B)$ of the principal $H$-coaction on $A$,
\beq\label{princ-co}
\delta: x \mapsto \zero{x} \otimes \one{x}.
\eeq 
The subalgebra ${}^{\co H_0}H$  of coinvariants of $H$ with respect to the
left coaction of $H_0$ is a right $H$ module and  comodule algebra too. It has right adjoint action
$$
\triangleleft : {}^{\co H_0}H \otimes H \to {}^{\co H_0}H \; ,
\quad k  \otimes h \mapsto k \triangleleft h := S(\one{h}) k \two{h}
$$ 
and coaction given by the restriction of the coproduct of $H$
to its subalgebra ${}^{\co H_0}H$, indeed valued in ${}^{\co H_0}H \otimes H$.
\begin{observation}
We recall that both $Z_A(B)$ and ${}^{\co H_0}H$ are Yetter--Drinfeld module algebras over $H$. 
The  $H$-module (\ref{MU-action}) and 
$H$-comodule (\ref{princ-co}) structures on $Z_A(B)$ are shown to satisfy the Yetter--Drinfeld condition
$$
\delta (x \triangleleft_{MU} h)= \zero{x} \triangleleft_{MU} \two{h} \otimes S(\one{h})\one{x} \three{h}
$$
 by using the property 
$
\one{\tuno{h}} \otimes \zero{\tuno{h}}  \otimes_B \zero{\tdue{h}} \otimes \one{\tdue{h}} = S(\one{h})  \otimes \tuno{\two{h}} \otimes_B \tdue{\two{h}} \otimes 
\three{h}$
of the translation map, obtained by
combining properties \eqref{p1} and \eqref{p2}.
The Yetter--Drinfeld compatibility condition for the subalgebra ${}^{\co H_0}H$ is an easy check. 
\end{observation}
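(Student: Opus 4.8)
The plan is to verify the Yetter--Drinfeld compatibility separately for the two module algebras, treating $Z_A(B)$ first since it is the delicate one, and then disposing of ${}^{\co H_0}H$ as an instance of the standard adjoint structure on $H$. For $Z_A(B)$ I would start from the defining formula $x \triangleleft_{MU} h = \tuno{h}\, x\, \tdue{h}$ of \eqref{MU-action} and apply the coaction $\delta$ of \eqref{princ-co}. Since $\delta$ is an algebra morphism on $A$, one gets
$$
\delta(x\triangleleft_{MU}h)=\delta(\tuno{h})\,\delta(x)\,\delta(\tdue{h})=\zero{\tuno{h}}\,\zero{x}\,\zero{\tdue{h}}\otimes\one{\tuno{h}}\,\one{x}\,\one{\tdue{h}}.
$$
The whole computation then hinges on rewriting the coactions on the two legs of the translation map by means of the four--fold identity $\one{\tuno{h}}\otimes\zero{\tuno{h}}\otimes_B\zero{\tdue{h}}\otimes\one{\tdue{h}}=S(\one{h})\otimes\tuno{\two{h}}\otimes_B\tdue{\two{h}}\otimes\three{h}$. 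Substituting it into the display and reading off the $A$--legs as $\zero{\tuno{h}}\,\zero{x}\,\zero{\tdue{h}}=\tuno{\two{h}}\,\zero{x}\,\tdue{\two{h}}=\zero{x}\triangleleft_{MU}\two{h}$ and the $H$--legs as $\one{\tuno{h}}\,\one{x}\,\one{\tdue{h}}=S(\one{h})\,\one{x}\,\three{h}$ yields exactly the claimed identity $\delta(x\triangleleft_{MU}h)=\zero{x}\triangleleft_{MU}\two{h}\otimes S(\one{h})\,\one{x}\,\three{h}$.

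The four--fold identity itself I would establish as a preliminary lemma by composing \eqref{p1} and \eqref{p2}: reading \eqref{p1} replaces the coaction on the second leg $\tdue{h}$ by a coproduct of $h$, and applying \eqref{p2} to the resulting first component installs the coaction on the first leg $\tuno{h}$, producing the factor $S(\one{h})$; coassociativity then relabels the three surviving coproduct components as $\one{h},\two{h},\three{h}$. The step that needs care --- and which I expect to be the main obstacle --- is the legitimacy of inserting $\zero{x}$ between the two $A$--legs: the identity lives in the tensor product balanced over $B$, so this insertion is meaningful only because $x\in Z_A(B)$, whence $\zero{x}\in Z_A(B)$ as well (the coaction \eqref{princ-co} restricting to the centralizer), which is precisely what makes the Miyashita--Ulbrich action well defined. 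One must check throughout that no factor of $B$ is moved across the $\otimes_B$, so that each rewriting descends to $A\otimes_B A$.

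For ${}^{\co H_0}H$ the action $k\triangleleft h=S(\one{h})\,k\,\two{h}$ and the coaction given by the coproduct are nothing but the right--adjoint action and the coproduct coaction that make $H$ a Yetter--Drinfeld module algebra over itself, so the compatibility condition is the standard one, verified by expanding $\Delta\bigl(S(\one{h})\,k\,\two{h}\bigr)$ using the anti--coalgebra property of $S$ together with the antipode axioms. The only point specific to the subalgebra is closure: that ${}^{\co H_0}H$ is stable under $\triangleleft$ and that $\Delta$ restricts to ${}^{\co H_0}H\otimes H$, both immediate from the definition of the coinvariants. Finally, the module--algebra axioms (the action measuring the product and the coaction being multiplicative) are routine in both cases, the latter because the coaction $\delta$ on $A$ is an algebra map and the former by the standard measuring property of the adjoint and Miyashita--Ulbrich actions.
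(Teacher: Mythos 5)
Your proposal is correct and follows exactly the route the paper indicates: you apply the algebra-map coaction to $\tuno{h}\,x\,\tdue{h}$, substitute the four-fold identity $\one{\tuno{h}} \otimes \zero{\tuno{h}} \otimes_B \zero{\tdue{h}} \otimes \one{\tdue{h}} = S(\one{h}) \otimes \tuno{\two{h}} \otimes_B \tdue{\two{h}} \otimes \three{h}$ obtained by combining \eqref{p1} and \eqref{p2}, and treat ${}^{\co H_0}H$ as the standard adjoint/coproduct Yetter--Drinfeld structure, which is precisely the paper's argument. Your extra care about why inserting $\zero{x}$ across $\otimes_B$ is legitimate (the coaction restricting to $Z_A(B)$) is a valid filling-in of a detail the paper leaves implicit.
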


We denote by
$\mathrm{Alg}_H^H({}^{\co H_0}H, Z_A(B))$ 
the space  of maps $f:{}^{\co H_0}H \to  Z_A(B)$ that are  $H$-module and comodule algebra maps.
The $H$-module map condition reads
\beq\label{MU-condition}
f(S(\one{h})k \two{h} )= \tone{h} f(k) \ttwo{h} \; , \quad
\forall h \in H, ~k \in {}^{\co H_0}H  
\eeq
where as before   $\tone{h} \otimes_B \ttwo{h} =\tau(h)$ is the translation map $\tau$ of
the extension $B \subset A$, while the $H$-comodule map condition is simply
\beq\label{com-cod}
\zero{f(h)} \otimes \one{f(h)} = f(\one{h}) \otimes \two{h} \, .
\eeq

For the Hopf-Galois Reduction Theorem \cite[Thm 4]{gunther}, functions in $\mathrm{Alg}_H^H({}^{\co H_0}H, Z_A(B))$  are in  correspondence with  elements in  the set ${}_B \mathrm{Red}^{H_0} (A)$ of 
algebraic $H_0$-reductions  of the $H$--Galois
extension $B \subset A$. 
In particular, given
a function  $f \in \mathrm{Alg}_H^H({}^{\co H_0}H, Z_B(A))$,
the set $I^+:=A f({}^{\co H_0}H \cap ker(\varepsilon))$ is an ideal in $A$, by 
condition \eqref{MU-condition},  and an $H$-subcomodule. Moreover  
$A \slash  I^+$ is an $H_0$-Galois extension of $B$ and a reduction of $B \subset A$. 
This gives the map
\begin{eqnarray}\label{hg-thm}
\mathrm{Alg}_H^H({}^{\co H_0}H, Z_B(A)) &\longrightarrow& {}_B \mathrm{Red}^{H_0} (A) \nonumber \\
f &\longmapsto& A/A \, f({}^{\co H_0}H \cap ker(\varepsilon)) .
\end{eqnarray}

We conclude this section with a technical lemma we shall
need in the sequel.
\begin{lemma}\label{lem:MUprod}
Let $f:{}^{\co H_0}H \to  Z_A(B)$ a linear map. 
Suppose  condition  \eqref{MU-condition} holds for two elements $h,h'$ in $H$, for all
$k \in {}^{\co H_0}H $.  Then condition  \eqref{MU-condition}  is also satisfied by the
element $hh'$, for all $k \in {}^{\co H_0}H $. 
\end{lemma}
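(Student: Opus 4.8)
The plan is to deduce condition \eqref{MU-condition} for the product $hh'$ from its two instances for $h$ and for $h'$, by peeling the coproduct apart one factor at a time. First I would rewrite the left-hand side of \eqref{MU-condition} for $hh'$. Since the coproduct is an algebra map, $\one{(hh')}\otimes\two{(hh')}=\one{h}\one{h'}\otimes\two{h}\two{h'}$, and since $S$ is anti-multiplicative, $S(\one{(hh')})=S(\one{h'})S(\one{h})$. Hence
\[
f\big(S(\one{(hh')})\,k\,\two{(hh')}\big)=f\big(S(\one{h'})\,S(\one{h})\,k\,\two{h}\,\two{h'}\big).
\]

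Next I would regroup the argument as $S(\one{h'})\,[\,S(\one{h})\,k\,\two{h}\,]\,\two{h'}$ and recognize the bracketed term as the right adjoint action $k\triangleleft h=S(\one{h})\,k\,\two{h}$. The key structural input is that ${}^{\co H_0}H$ is closed under $\triangleleft$, so $k\triangleleft h$ again lies in ${}^{\co H_0}H$ and the hypothesis for $h'$ is legitimately applicable to it. Using linearity of $f$ to carry out the Sweedler sum over the components of $h$ independently of those of $h'$, the hypothesis for $h'$ gives
\[
f\big(S(\one{h'})\,(k\triangleleft h)\,\two{h'}\big)=\tone{h'}\,f(k\triangleleft h)\,\ttwo{h'},
\]
and then the hypothesis for $h$ yields $f(k\triangleleft h)=f\big(S(\one{h})\,k\,\two{h}\big)=\tone{h}\,f(k)\,\ttwo{h}$, so that the whole expression collapses to $\tone{h'}\,\tone{h}\,f(k)\,\ttwo{h}\,\ttwo{h'}$.

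Finally I would identify this with the right-hand side of \eqref{MU-condition} for $hh'$ by invoking the multiplicativity property \eqref{tau-st} of the translation map, which gives $\tone{(hh')}\otimes_{B}\ttwo{(hh')}=\tone{h'}\tone{h}\otimes_{B}\ttwo{h}\ttwo{h'}$; inserting $f(k)$ between the two legs then produces exactly $\tone{h'}\,\tone{h}\,f(k)\,\ttwo{h}\,\ttwo{h'}$, which matches the expression obtained above and closes the argument.

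The only genuinely delicate point, and the one I would flag explicitly, is the well-definedness of the insertions into the balanced tensor products: the expressions $\tone{h}\,f(k)\,\ttwo{h}$ and $\tone{(hh')}\,f(k)\,\ttwo{(hh')}$ place $f(k)$ across a tensor product over $B=A^{\co H}$, which is unambiguous precisely because $f(k)\in Z_A(B)$ commutes with $B$. This is the same fact that makes the Miyashita--Ulbrich action \eqref{MU-action} well defined, and it underlies every manipulation above. Beyond this, I expect no real obstacle: once the adjoint-action closure of ${}^{\co H_0}H$ and the property \eqref{tau-st} of $\tau$ are in hand, the proof is a direct two-step peeling of the coproduct.
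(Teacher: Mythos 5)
Your proof is correct and takes essentially the same route as the paper's: both rewrite $S(\one{(hh')})k\two{(hh')}$ as $S(\one{h'})\big(S(\one{h})k\two{h}\big)\two{h'}$, apply the hypothesis for $h'$ to the element $k\triangleleft h=S(\one{h})k\two{h}\in {}^{\co H_0}H$, then the hypothesis for $h$, and conclude via the multiplicativity property \eqref{tau-st} of the translation map. Your explicit justifications of the two points the paper leaves implicit --- the closure of ${}^{\co H_0}H$ under the adjoint action, and the well-definedness of inserting $f(k)\in Z_A(B)$ across the balanced tensor product --- are accurate and welcome.
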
 
\begin{proof}
For the element $hh'$,   we  compute
\begin{align*}
f\big(S(\one{(hh')})k \two{(hh')} \big)
&= f\Big(S(\one{h'}) ~\big( S(\one{h})k \two{h}   \big) ~\two{h'} \Big)
\\
&
= \tone{h'} f\big( S(\one{h})k \two{h}   \big) ~\ttwo{h'} 
\\
&
= \tone{h'}\tone{h} f (k) \ttwo{h}   \ttwo{h'} 
\\
&
= \tone{(hh')} f(k) \ttwo{(hh')} 
\end{align*}
thus proving  \eqref{MU-condition}.
The last equality holds by property (\ref{tau-st}) of $\tau$.
\end{proof}

\subsection{Quantum sheaf algebraic reductions}
Next, we want to take advantage of the above correspondence
for the affine setting to obtain a result for the sheaf theoretic
definition of reduction. This will allow us to consider examples
where the global coordinate rings of the geometric objects are
not available.\\

Let $\cF$ be a quantum $H$-principal bundle over the
quantum ringed space $(M, \cO_M)$ with respect to an open covering
$\{U_i\}$ as in Definition \ref{qpb-def}.
For each $i$, let $Z_{\cF(U_i)}\big(\cO_M(U_i)\big)$ be the centralizer
of $\cF(U_i)^{\\co H} =\cO_M(U_i)$  in $\cF(U_i)$ with right action defined as in \eqref{MU-action}
and right $H$-coaction given by the restriction of that on $\cF(U_i)$.
{Assume, by eventually restricting $\cF$, the topology to be generated by
the open covering $\{U_i\}$, so that $\{U_I:=\cap_{i\in I} U_i\}$ is a base (see also
\cite[\S 4]{AFLW} )}  and $I$ is a multindex, $I=(i_1, \dots, i_r)$,  $1\leq i_1< \dots < i_r \leq n$. In the following we will denote for simplicity  by $\rho_{K,I}$ the restriction map from $U_K$ to $U_I$.

\begin{theorem}\label{prop:gunther-sheaf}
Let $\cF$ be a quantum $H$-principal bundle over the
quantum ringed space $(M, \cO_M)$  with respect to a finite open covering $\{U_i\}$, as above.
Let $H_0=H/J$  
as above.
Assume $\{f_i: {}^{\co H_0}H \lra Z_{\cF(U_i)}\big(\cO_M(U_i)\big)\}$
is a family of $H$-module and $H$-comodule algebra 
maps  
such that the following diagram commutes  
\beq\label{glue}
\xymatrix{
  {}^{\co H_0}H  
\ar@{->}_{f_{j}}[d]   \ar@{->}^{f_i \qquad }[r] &  Z_{\cF(U_i)}\big(\cO_M(U_i)\big) \subseteq \cF(U_i)
\ar@{->}^{\rho_{i,ij}}[d]   \\
Z_{\cF(U_j)}\big(\cO_M(U_j)\big) \subseteq   \cF(U_j)    \ar@{->}^{\qquad \rho_{j,ij}}[r]  &
\cF(U_i \cap U_j)  
}
\eeq

Then 
$\cF$ admits an algebraic reduction to $H_0$.
\end{theorem}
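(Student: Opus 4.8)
The plan is to apply the affine Hopf--Galois reduction \eqref{hg-thm} of \cite{gunther} on each member of the covering and then glue the resulting local reductions into a sheaf. Concretely, for each $i$ I apply \eqref{hg-thm} to $f_i$: setting $V := {}^{\co H_0}H \cap \ker\varepsilon$, the subspace $I_i^{+} := \cF(U_i)\, f_i(V)$ is, by the module condition \eqref{MU-condition}, a two--sided ideal and an $H$--subcomodule of $\cF(U_i)$, and the affine theorem guarantees that the quotient
\[
\cF_0(U_i) := \cF(U_i)\big/ I_i^{+}
\]
is a principal $H_0$--Galois extension of $\cF(U_i)^{\co H} = \cO_M(U_i)$, with $\cF_0(U_i)^{\co H_0} = \cO_M(U_i)$ and surjective $H_0$--comodule algebra quotient map $\varphi_i \colon \cF(U_i) \twoheadrightarrow \cF_0(U_i)$. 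What remains is to promote the family $\{\cF_0(U_i)\}$ to a sheaf $\cF_0$ of $H_0$--comodule algebras and the family $\{\varphi_i\}$ to a global morphism $\varphi \colon \cF \to \cF_0$.

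For the gluing I would use the commuting square \eqref{glue}. Since the restriction maps of $\cF$ compose, $\rho_{i,I} = \rho_{ij,I}\circ\rho_{i,ij}$, the pairwise identity $\rho_{i,ij}\circ f_i = \rho_{j,ij}\circ f_j$ of \eqref{glue} propagates to all multiple intersections, so that
\[
f_I := \rho_{i,I}\circ f_i \colon {}^{\co H_0}H \lra \cF(U_I)
\]
is independent of the chosen index $i \in I$. As each restriction map is an $H$--comodule algebra map preserving coinvariants, $f_I$ is again an $H$--comodule algebra map; granting moreover that it lands in $Z_{\cF(U_I)}(\cO_M(U_I))$ (the delicate point, addressed below), \eqref{hg-thm} applies on $U_I$ and produces $\cF_0(U_I) := \cF(U_I)/I_I^{+}$ with $I_I^{+} := \cF(U_I)\,f_I(V)$. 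Because $\rho_{i,ij}$ is multiplicative and $\rho_{i,ij}\circ f_i = f_{ij}$, one gets $\rho_{i,ij}(I_i^{+}) \subseteq I_{ij}^{+}$, so the restriction maps descend to $\rho^0_{i,ij}\colon \cF_0(U_i) \to \cF_0(U_{ij})$ satisfying $\rho^0_{i,ij}\circ\varphi_i = \varphi_{ij}\circ\rho_{i,ij}$. Since $\{U_I\}$ is a base and $\cF$ is a sheaf, these compatible quotients assemble $\cF_0$ into a sheaf of $H_0$--comodule algebras, and the $\varphi_I$ glue to a morphism $\varphi\colon\cF\to\cF_0$.

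To read off the statement, note that by the first paragraph each $\cF_0(U_i)$ is a principal $H_0$--comodule algebra with $\cF_0(U_i)^{\co H_0}=\cO_M(U_i)=\cF(U_i)^{\co H}$, so $\cF_0$ satisfies conditions (1)--(2) of Definition \ref{qpb-def} for the cover $\{U_i\}$; and $\varphi$ is a surjective $H_0$--comodule algebra morphism with $\varphi\big(\cF(U_i)^{\co H}\big)=\cF(U_i)^{\co H_0}$. By Definition \ref{qred-def} this exhibits $\cF_0$ as an algebraic reduction of $\cF$ to $H_0$, as claimed. Notice that, locally, each $\cF_0(U_i)$ is a reduction of $\cF(U_i)$ in the sense of Definition \ref{affine-red}.

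The step I expect to be the main obstacle is the verification, on the intersections, that $f_{ij}$ still takes values in $Z_{\cF(U_{ij})}(\cO_M(U_{ij}))$ and still satisfies the Miyashita--Ulbrich module condition \eqref{MU-condition} relative to the translation map $\tau_{ij}$ of the extension $\cO_M(U_{ij})\subset\cF(U_{ij})$ --- a priori the hypothesis on $f_i$ only involves $\tau_i$. The key fact making this work is that an $H$--comodule algebra map sending coinvariants to coinvariants intertwines the canonical maps (as recalled in \S\ref{qpb-sec}), and therefore the translation maps: $(\rho_{i,ij}\otimes\rho_{i,ij})\,\tau_i(h) = \tau_{ij}(h)$ for every $h\in H$. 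Granting this, the module condition transports along the restriction,
\[
f_{ij}\big(S(\one{h})\,k\,\two{h}\big) = \rho_{i,ij}\big(\tone{h}\,f_i(k)\,\ttwo{h}\big) = \tone{h}\,f_{ij}(k)\,\ttwo{h},
\]
the components on the right now referring to $\tau_{ij}$, while the comodule condition \eqref{com-cod} passes through $\rho_{i,ij}$ directly; Lemma \ref{lem:MUprod} then lets me reduce the module check to a set of algebra generators of $H$. Establishing this compatibility of the centralizers and of the Miyashita--Ulbrich structures under restriction is the technical heart of the argument, after which the remaining sheaf-- and comodule--theoretic compatibilities are routine.
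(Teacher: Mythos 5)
Your proposal is correct and follows essentially the same route as the paper's proof: apply the affine Hopf--Galois reduction \eqref{hg-thm} locally, use \eqref{glue} to make the induced maps (and hence the ideals $\cF(U_I)\,f_I\big({}^{\co H_0}H \cap \ker\varepsilon\big)$) independent of the chosen index and compatible with restrictions, and pass to the quotient sheaf, concluding via Definition \ref{qred-def}. The only substantive difference is bookkeeping: the paper defines the ideal presheaf on the base and sheafifies (so the affine theorem is invoked only on the covering opens $U_i$), whereas you re-apply it on intersections and explicitly address the transport of the centralizer/Miyashita--Ulbrich conditions along restriction maps --- checks the paper leaves to the reader.
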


\begin{proof} 
Suppose we have a family of maps $f_i$ as above. We want to define
a reduction of $\cF$ as a quotient $\cF_0:=\cF/\cI$, where
$\cI$ is an ideal sheaf. We define $\cI$ by using locally the correspondence   \eqref{hg-thm} of the Hopf--Galois reduction Theorem. 
For each open set $U_I$ of the base, we set
$$
\cI(U_I):= \cF(U_I) (\rho_{i,I} \circ f_i) \big( {}^{\co H_0}H \cap \ker \varepsilon_H \big)\, .
$$
This is well-defined, that is it does not depend on the choice of the index $i$.
In fact since
$$
\rho_{i, I } = \rho_{ij ,I} \circ \rho_{i, ij }
$$
and by hypothesis \eqref{glue} holds, $\rho_{i, ij } \circ f_i = \rho_{j, ij } \circ f_j$,
we have that:
$$
\rho_{i,I} \circ f_i =  \rho_{ij ,I} \circ \rho_{i, ij } \circ f_i=
\rho_{j,I} \circ f_j  \; .
$$

\medskip
The restriction maps $\rho_{K,I}$ of the sheaf $\cF$ ensure that $\cI$
has the presheaf property on the base $\{U_I\}$. By \cite[Ch.0, \S 3.2.1]{groth},
we can extend the presheaf on the base to a presheaf on all open sets and then,
as usual, via sheafification, get an ideal sheaf, that we still denote by $\cI$.
Hence we obtain a sheaf $\cF_0:=\cF/\cI$. By applying  the
Hopf-Galois Reduction Theorem  to $\cF(U_i)/\cI(U_i)$, we see this latter is a faithfully flat $H_0$-Galois extension of
$\cO_M(U_i)$ and so is $\cF_0(U_i)$ obtained via sheafification (it is a finite inverse limit).
We leave to the readers all the checks of the properties involving the restriction maps
$\rho^0_{K,I}$ of $\cF_0$ on the base and then extended naturally to all open sets.
\end{proof}

\subsection{Example}\label{ex:rid-f}  

We return to the examples  in  \S \ref{ex-sec}  and  
show here the existence of the quantum reduction by constructing maps $f_j$ as
in Thm \ref{prop:gunther-sheaf}.
We keep the same notation used there.
\medskip

Let $H=\widetilde{\cO}_q(P):=\widetilde{\cO}_q(\rGL(n))/(\T_{s1}, s \neq 1)$ and let
$J$ be the  ideal generated by $\{ p_{1 s}, s =2, \dots , n \}$.  It is a Hopf algebra ideal of $H$ and 
$H_0=\widetilde{\cO}_q(P) \slash J$ is the Hopf algebra  in \eqref{H0}.
By standard arguments, 
$H$ is a left $H_0$-comodule algebra with coaction $\rho: H \to H_0 \otimes H$,
$h \mapsto \pi(\one{h}) \otimes \two{h}$, for $\pi: H \to H_0$  the quotient map,
$\overline{p}_{ij}:=\pi(p_{ij})$.

We notice that $H$ is a trivial $H_0$-Galois extension of $ {}^{\co H_0}H$, by writing directly 
the cleaving map $\jmath:H_0 \lra H$ and its convolution inverse $\bar{\jmath}$:
$$
\jmath(\overline{p}_{ij})=p_{ij}, \qquad \bar{\jmath}(\overline{p}_{11})=p_{11}^{-1}, \quad
\bar{\jmath}(\overline{p}_{ij})=S(p_{ij}).
$$

\medskip
The subalgebra  $ {}^{\co H_0}H$ of coinvariants is generated by the elements

$$\beta_s:=p_{11}^{-1}p_{1s}\; , \quad s=2,\dots, n$$ with commutation relations 
\beq\label{comm-beta}
\beta_s \beta_r = \mu_{r s} \beta_r \beta_s \, , \quad r,s=2,\dots, n .
\eeq
As in \S\ref{ex-sec}, we consider the quantum ringed space $(\bP^{n-1}(\C),\cO^q_{\bP^{n-1}(\C)})$ with 
open cover given by the sets $U_\ell$  in \eqref{aperti-U}, for $\ell=1 , \dots , n$, that we assume to generate the topology. Thus the intersections
of an arbitrary number of $U_\ell$'s  form a basis.  
\medskip

In Propositon \ref{prop:sheafF} we proved that the assignment
$\cF: U_\ell \mapsto A_\ell=A[a_{\ell 1}^{-1}]$, for $A_\ell$ the algebras defined in \eqref{Aell},
gives a sheaf $\cF$ of $H$-comodule algebras for the topology generated by the $\{U_\ell\}$ and moreover that it is 
a  quantum   principal bundle (in the sense of Definition \ref{qpb-def})
 with respect to the covering
$\{U_\ell\}$. 
We are now going to show the existence of a reduction of $\cF$ to  a quantum   principal $H_0$-bundle by showing 
the existence of algebra morphisms $f_\ell : {}^{\co H_0}H \to Z_{B_\ell} (A_\ell)$,  $\ell=1, \dots , n$, which satisfy the hypothesis of Thm \ref{prop:gunther-sheaf}.
\medskip

For $\ell=1, \dots , n$, we define the map $f_\ell: {}^{\co H_0}H \to  A_\ell$  on the algebra generators $\beta_k$  as 
\beq\label{mappe-f}
f_\ell: 
\beta_s \mapsto \gn{\ell}_{1s}=\T_{\ell 1}^{-1}\T_{\ell s}\, , \quad s=2,\dots, n .
\eeq
By comparing the commutation relations of the   $\beta_s$ in \eqref{comm-beta} with those of the elements $\gn{\ell}_{1s}$ in \eqref{comm-rel-nu}, we see that 
the map  $f_\ell$  can be extended to the whole algebra ${}^{\co H_0}H$ by requiring it to be  an algebra morphism.

\begin{proposition}
The algebra maps $f_\ell$ in \eqref{mappe-f} are valued in the   centralizer $Z_{B_\ell} (A_\ell)$ of $B_\ell$ in $A_\ell$, for $B_\ell$ the subalgebra of coinvariants of $A_\ell$ with respect to the $H$-coaction. Moreover $f_\ell: {}^{\co H_0}H \to  Z_{B_\ell} (A_\ell)$ are $H$-module, comodule algebra maps
and they satisfy (\ref{glue}), that is
\beq\label{comm-d}
\rho_{\ell,\ell m} \circ f_\ell=\rho_{m,\ell m} \circ f_m
\eeq
for  $\rho_{\ell, \ell m }$ the restriction morphisms  in \eqref{restr-maps}.
\end{proposition}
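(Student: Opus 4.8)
The plan is to establish the four properties bundled in the statement — image in the centralizer, $H$-comodule map, $H$-module map in the sense of \eqref{MU-condition}, and the gluing \eqref{comm-d} — exploiting throughout that $f_\ell$ is already an algebra map, so that each check reduces to the algebra generators $\beta_s$ of ${}^{\co H_0}H$. First I would verify that $f_\ell(\beta_s)=\gn{\ell}_{1s}$ centralizes $B_\ell$. Since $B_\ell$ is generated by the $\gb{\ell}_j=\T_{\ell 1}^{-1}\T_{j1}$ of \eqref{coinv-gln}, it is enough to commute $\gn{\ell}_{1s}$ past each $\gb{\ell}_j$: the relations $\T_{k1}\gn{\ell}_{1s}=\mu_{s1}\gn{\ell}_{1s}\T_{k1}$ and $\T_{\ell 1}^{-1}\gn{\ell}_{1s}=\mu_{1s}\gn{\ell}_{1s}\T_{\ell 1}^{-1}$ from \eqref{comm-rel-nu} produce the factor $\mu_{s1}\mu_{1s}=1$, so $\gb{\ell}_j\gn{\ell}_{1s}=\gn{\ell}_{1s}\gb{\ell}_j$. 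As $Z_{B_\ell}(A_\ell)$ is a subalgebra and $f_\ell$ is an algebra morphism, the whole image lands there.

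For the comodule condition \eqref{com-cod} I would again reduce to the $\beta_s$, all three maps $f_\ell$, $\delta_\ell$, $\Delta$ being algebra morphisms. Expanding $\delta_\ell(\gn{\ell}_{1s})=\delta_\ell(\T_{\ell 1}^{-1})\,\delta_\ell(\T_{\ell s})$ gives $1\otimes\beta_s+\sum_{k=2}^{n}\gn{\ell}_{1k}\otimes p_{11}^{-1}p_{ks}$, where the $k=1$ term collapses to $1\otimes\beta_s$; on the other side, using that $p_{11}$ is group-like in $H$, one gets $\Delta(\beta_s)=1\otimes\beta_s+\sum_{k=2}^{n}\beta_k\otimes p_{11}^{-1}p_{ks}$, and applying $(f_\ell\otimes\mathrm{id})$ reproduces $\delta_\ell(\gn{\ell}_{1s})$ verbatim.

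The main obstacle is the Miyashita--Ulbrich condition \eqref{MU-condition}. By Lemma \ref{lem:MUprod} I may restrict $h$ to an algebra generator of $H$ (a $p_{jk}$, or $\tilde D^{-1}$), and since the adjoint action on ${}^{\co H_0}H$ and the action \eqref{MU-action} on $Z_{B_\ell}(A_\ell)$ are module-algebra actions while $f_\ell$ is an algebra map, I may also restrict $k$ to a generator $\beta_s$. For $\ell=1$ this is transparent: the cleaving map $\gamma_1$ of \eqref{gamma1} is an algebra map whose restriction to ${}^{\co H_0}H$ equals $f_1$, so that for this trivial extension $\tone{h}\otimes_{B_1}\ttwo{h}=\gamma_1(S(\one{h}))\otimes_{B_1}\gamma_1(\two{h})$, and hence $f_1(S(\one{h})\,k\,\two{h})=\gamma_1(S(\one{h}))\,f_1(k)\,\gamma_1(\two{h})=\tone{h}\,f_1(k)\,\ttwo{h}$. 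For $\ell\neq1$ the cleaving map is no longer an algebra map, and I would instead plug in the explicit translation map read off from \eqref{mappa-can-inv}, namely $\tau(p_{11})=\T_{\ell 1}^{-1}\otimes_{B_\ell}\T_{\ell 1}$, $\tau(p_{jk})=\sum_m S(\T_{jm})\otimes_{B_\ell}\T_{mk}$ and $\tau(\tilde D^{-1})=D\otimes_{B_\ell}D^{-1}$; for example $h=p_{11}$ gives $\gn{\ell}_{1s}\triangleleft_{MU}p_{11}=\T_{\ell 1}^{-1}\gn{\ell}_{1s}\T_{\ell 1}=q\,\gn{\ell}_{1s}=f_\ell(\beta_s\triangleleft p_{11})$. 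The remaining generators are handled the same way, and this bookkeeping with the antipode and the $\mu$-relations is where the real effort sits. Alternatively — and more cleanly — one can observe that the Miyashita--Ulbrich action is intrinsic to the Hopf--Galois structure and compatible with the Ore localizations defining the $A_{\ell m}$, and transport the $\ell=1$ identity along the comodule-algebra isomorphism $\Psi$ and the gluing \eqref{comm-d}, descending by injectivity of $A_\ell\hookrightarrow A_{\ell m}$.

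Finally, \eqref{comm-d} follows because $\rho_{\ell,\ell m}\circ f_\ell$ and $\rho_{m,\ell m}\circ f_m$ are algebra maps, so they need only be compared on the $\beta_s$: the first sends $\beta_s$ to $\gn{\ell}_{1s}$ by inclusion, while the second sends $\beta_s\mapsto\gn{m}_{1s}\mapsto\Psi_{m\ell}(\gn{m}_{1s})=\gn{\ell}_{1s}$ by \eqref{psi-lm2}, so the two agree on generators and hence everywhere.
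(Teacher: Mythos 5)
Your proposal is correct and follows essentially the same route as the paper's proof: reduce every check to the generators $\beta_s$ using that all maps involved are algebra maps, verify the centralizer and comodule conditions via \eqref{comm-rel-nu} and \eqref{coact-nu} (your explicit tracking of the $1\otimes\beta_s$ term is in fact slightly more careful than the paper's formulas), verify the Miyashita--Ulbrich condition \eqref{MU-condition} on the Hopf-algebra generators through the explicit translation map \eqref{mappa-can-inv} and extend by Lemma \ref{lem:MUprod}, and obtain the gluing \eqref{comm-d} from $\Psi_{m\ell}$ as in \eqref{psi-lm2}. Your one genuine variation --- identifying $f_1$ with the restriction of the algebra cleaving map $\gamma_1$ of \eqref{gamma1} so that the $\ell=1$ case of \eqref{MU-condition} comes for free --- is a clean shortcut the paper does not use, but for $\ell\neq 1$ you still owe the explicit computations for $h=p_{ir}$, $r>1$, and $h=\tilde{D}^{-1}$ (which the paper carries out and which do go through exactly as in your $p_{11}$ sample).
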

\begin{proof}

By using the commutation relations in \eqref{comm-rel-nu}, we see that 
the elements $\gn{\ell}_{1s}$ commute with the generators $ \gb{\ell}_r= \T_{\ell 1}^{-1} \T_{r1}$ in 
  \eqref{coinv-gln} of the subalgebra of coinvariants $B_\ell$:
$$
\gn{\ell}_{1 s} \gb{\ell}_r = \gn{\ell}_{1 s} \T_{\ell 1}^{-1} \T_{r1} 
= \mu_{s1} \T_{\ell 1}^{-1} \gn{\ell}_{1 s} \T_{r1}
 =\mu_{s1}  \mu_{1s} \T_{\ell 1}^{-1} \T_{r1}  \gn{\ell}_{1 s} =  \gb{\ell}_r \gn{\ell}_{1 s}
$$  
  where we use property \eqref{prop2}. 
Thus, the image of  $f_\ell$ is contained in the centralizer $Z_{B_\ell} (A_\ell)$.

Next we show that $f_\ell$ is an $H$-module and comodule  map. Being an algebra map, it is enough to show the properties on the algebra generators.
The $H$-comodule map condition \eqref{com-cod} is easily verified: using the expression for the coaction $\delta_\ell: A_\ell \to A_\ell \otimes H$ in \eqref{coact-nu}, we compute
$$
\delta_\ell(f_\ell(\beta_s)) = \delta_{\ell} (\gn{\ell}_{1s}) = \sum_{r=2}^n \gn{\ell}_{1r} \otimes p_{11}^{-1} p_{rs} =\sum_{r=2}^n f_\ell(\beta_r)\otimes p_{11}^{-1} p_{rs}  = f_\ell(\one{(\beta_s)}) \otimes \two{(\beta_s)}
$$
where  we used
$\Delta(\beta_s)=\Delta(p_{11}^{-1}p_{1s})= \sum_{r=2}^n  p_{11}^{-1} p_{1r} \otimes p_{11}^{-1}p_{rs} = \sum_{r=2}^n  \beta_r   \otimes p_{11}^{-1}p_{rs} $
for the last equality.

In order to show that the map $f_\ell$ is also an $H$-module map, that is \eqref{MU-condition} is satisfied, 
$$
f_\ell \big( S(\one{h}) \beta_s \two{h} \big)= \tone{h} f_\ell(\beta_s) \ttwo{h} \; , \quad
\forall h \in H, s=2, \dots, n, 
$$
we need the explicit expression of the translation map of the $H$--Galois extension $B_\ell \subset A_\ell$ that was given in \eqref{mappa-can-inv}.
For $h=p_{ir}$, $r>1$,  a generator of $H$, we have
\begin{align*}
f_\ell \big( \sum_{k=2}^n S(p_{i k}) \beta_s p_{k r} \big)
= f_\ell \big(\mu_{1r}\mu_{rs} \sum_{k=2}^n S(p_{i k}) p_{k r} \beta_s ) \big)
= \mu_{1r}\mu_{rs} \delta_{ir} f_\ell (  \beta_s  )
\end{align*}
which coincides with
$$
\tone{p_{ir}} f_\ell(\beta_s) ~ \ttwo{p_{ir}} 
= \sum_{k=1}^n S(\T_{i k}) f_\ell(\beta_s) \T_{k r}  
= \mu_{1r}\mu_{rs}  S(\T_{i k})   \T_{k r}    f_\ell (  \beta_s  )
= \mu_{1r}\mu_{rs} \delta_{ir} f_\ell (  \beta_s  ) \, .
$$
For $h=p_{11}$  we have
\begin{align*}
f_\ell \big( S(p_{11}) \beta_s p_{11}) \big)
= f_\ell \big(\mu_{1s}   S(p_{11}) p_{11} \beta_s ) \big)
= \mu_{1s}  f_\ell (  \beta_s  )
\end{align*}
equal to 
$$
\tone{p_{11}} f_\ell(\beta_s) ~ \ttwo{p_{11}} 
= \T_{\ell 1}^{- 1} f_\ell(\beta_s) \T_{\ell 1}
=\mu_{1s} \T_{\ell 1}^{- 1} \T_{\ell 1}  f_\ell(\beta_s)
=\mu_{1s}   f_\ell(\beta_s)
$$
and similarly, condition \eqref{MU-condition} is shown to hold for $h=\tilde{D}^{-1}$
by using that $\beta_s \tilde{D}^{-1} = q^{-2(s-1)} \tilde{D}^{-1}  \beta_s$
as well as $\gn{\ell}_{1s}  {D}^{-1} = q^{-2(s-1)} {D}^{-1}  \gn{\ell}_{1s}$.
It is clear from the proof, that the above identities hold valid if we replace 
$\beta_s$ by a generic element $ k \in {}^{\co H_0}H$, being this just a polynomial on the $\beta_s$.
Thanks to Lemma \ref{lem:MUprod} we conclude that \eqref{MU-condition} holds in general, that is the map $f_\ell$ is an $H$-module map as claimed.

\medskip
Now we go to the proof of (\ref{comm-d}).
Using the restriction morphisms $\rho_{\ell, \ell m }$ in \eqref{restr-maps}, $\ell<m$,
$$
\rho_{\ell, \ell m } \big( f_\ell (\beta_s) \big) 
=\rho_{\ell, \ell m } (\gn{\ell}_{1s}) =\gn{\ell}_{1s}
= \Psi_{m  \ell} (\gn{m}_{1s})
=\rho_{m, \ell m}   (\gn{m}_{1s})
= \rho_{m, \ell m} \big( f_m (\beta_s) \big)
$$
where we used the isomorphisms  $\Psi_{m  \ell}$ of Proposition \ref{prop:iso} in \eqref{psi-lm2}.
\end{proof}

\begin{corollary} Let the notation be as above. Then $\cF$ admits
a reduction to an $H_0$ quantum principal bundle $\cF^0$. 
\end{corollary}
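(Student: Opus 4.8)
The plan is to invoke Theorem \ref{prop:gunther-sheaf} directly, feeding it the family of maps $\{f_\ell\}_{\ell=1,\dots,n}$ just constructed. First I would recall the standing setup: the covering $\{U_\ell\}$ with $\ell=1,\dots,n$ is \emph{finite}, and by Proposition \ref{prop:sheafF} the sheaf $\cF$ is a quantum $H$-principal bundle over $(\bP^{n-1}(\C),\cO^q_{\bP^{n-1}(\C)})$ with respect to this covering, where $H=\widetilde{\cO}_q(P)$ and $H_0=H/J$ is the Hopf algebra in \eqref{H0}. These are exactly the hypotheses required at the top of Theorem \ref{prop:gunther-sheaf}, so nothing further is needed on the bundle side.

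Next I would observe that the remaining hypotheses of the theorem have all been verified in the preceding Proposition. Under the identifications $\cF(U_\ell)=A_\ell$ and $\cO^q_{\bP^{n-1}(\C)}(U_\ell)=\cF(U_\ell)^{\co H}=B_\ell$, the maps $f_\ell$ are algebra maps valued in the centralizer $Z_{B_\ell}(A_\ell)=Z_{\cF(U_\ell)}\big(\cO^q_{\bP^{n-1}(\C)}(U_\ell)\big)$; they satisfy the $H$-comodule algebra condition \eqref{com-cod} and, via Lemma \ref{lem:MUprod}, the $H$-module (Miyashita--Ulbrich) condition \eqref{MU-condition}; and finally they fulfill the compatibility \eqref{comm-d} with the restriction morphisms $\rho_{\ell,\ell m}$, which is precisely the commutativity of diagram \eqref{glue}. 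Thus $\{f_\ell:{}^{\co H_0}H \to Z_{\cF(U_\ell)}(\cO^q_{\bP^{n-1}(\C)}(U_\ell))\}$ is a family of $H$-module and $H$-comodule algebra maps of the type demanded by the theorem.

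With all hypotheses in place, Theorem \ref{prop:gunther-sheaf} produces an ideal sheaf $\cI$, given locally on the base by $\cI(U_I)=\cF(U_I)\,(\rho_{\ell,I}\circ f_\ell)\big({}^{\co H_0}H \cap \ker\varepsilon_H\big)$ (independent of $\ell$ by \eqref{comm-d}), and the quotient $\cF^0:=\cF/\cI$ is an algebraic reduction of $\cF$ to a quantum $H_0$-principal bundle. Since an algebraic reduction is in particular a reduction in the sense of Definition \ref{qred-def}, this yields the claim. I expect essentially no obstacle here: the entire analytic content, namely the verification that the $f_\ell$ are Yetter--Drinfeld (module and comodule) algebra maps into the centralizers and that they glue, was already carried out in the Proposition, so this Corollary is simply the specialization of the general sheaf-theoretic criterion to the explicit family $\{f_\ell\}$.
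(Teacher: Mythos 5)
Your proposal is correct and follows essentially the same route as the paper: the corollary is obtained by invoking Theorem \ref{prop:gunther-sheaf}, whose hypotheses (finite covering, $\cF$ a quantum $H$-principal bundle by Proposition \ref{prop:sheafF}, and the $f_\ell$ being $H$-module and $H$-comodule algebra maps into the centralizers satisfying the gluing condition \eqref{comm-d}) were all verified in the preceding proposition. The only content the paper adds beyond your argument is the explicit identification of the resulting quotient, namely that ${}^{\co H_0}H \cap \ker\varepsilon_H$ makes $\cI(U_\ell)$ the ideal generated by the $\gn{\ell}_{1s}$, so that $\cF_0(U_\ell)=A_\ell/\cI(U_\ell)=A^0_\ell$ recovers the reduction sheaf of \S\ref{sec:qrF}; this is an identification, not a logical prerequisite for the existence claim you prove.
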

By  Proposition \ref{prop:gunther-sheaf},  on the opens   of the covering one has
$\cF_0(U_\ell)= A_\ell \slash \cI(U_\ell)$
where $\cI(U_\ell)= \cF(U_\ell)  f_\ell \big( {}^{\co H_0}H \cap \ker \varepsilon_H \big)$. 
Since $\varepsilon_H(p_{1 s})=0$, we have here ${}^{\co H_0}H \cap \ker \varepsilon_H= {}^{\co H_0}H$. Moreover, being $f_\ell$ an algebra map, its 
image is the subalgebra of $A_\ell$ generated by the elements $\gn{\ell}_{1s}$. Then  
$$
\cF_0(U_\ell)= A_\ell \slash \cI(U_\ell) = A_\ell^0
$$
for $A_\ell^0$ the $H_0$-comodule algebras in \eqref{F0-GLn} defining the reduction sheaf $\cF_0$ of  \S \ref{sec:qrF}.

\appendix
\section{Proof of Proposition \ref{prop:Floc-cleft}}\label{app:A}

In this Appendix, we conclude the proof of Proposition \ref{prop:Floc-cleft}  showing that the map 
$$
\gamma_2: \widetilde{\cO}_q(P)  \to A_2 \; , \qquad  
\tilde{D}^p t^m   n^{r} \mapsto  (-1)^p q^p  D^p c^{m} d^r  
$$
defined in \eqref{cleaving2-gl2} is indeed a cleaving map for the algebra extension $B_2 \subset A_2$, which   is thus cleft as claimed.
First we show $\gamma_2$
is an $\widetilde{\cO}_q(P)$-comodule map: $(\gamma_2 \otimes id)\circ \Delta = \delta_2 \circ \gamma_2$. 
For that we first compute
$$
\Delta(\tilde{D}^p)= \tilde{D}^p \otimes \tilde{D}^p \; , \quad
\Delta(t^m)= t^m \otimes t^m \; , \quad
\Delta(n^r) = \sum_{j=0}^r \binom{r}{j} q^{j(r-j)} t^j n^{r-j} \otimes \tilde{D}^{r-j} t^{j-r}n^j  \; ,
$$
where the last formula is proven by induction and the symbol $\binom{r}{j}$ denotes the (classical) binomial coefficient. Moreover, for $m,p \in   \mathbb{Z}$ and $ r \in   \mathbb{N}$,  we compute
$$
\delta_2({D}^p)= {D}^p \otimes \tilde{D}^p \; , \quad
\delta_2(c^m)= c^m \otimes t^m \; , \quad
\delta_2(d^r) = \sum_{j=0}^r \binom{r}{j} q^{j(r-j)} c^j d^{r-j} \otimes \tilde{D}^{r-j} t^{j-r}n^j  \; ,
$$
using again induction. Then,  
\begin{align*}
(\gamma_2 \otimes id)\circ \Delta (\tilde{D}^p t^m  n^{r} )
& = \sum_{j=0}^r \binom{r}{j} q^{j(r-j)} \gamma_2 \big(
\tilde{D}^p t^{m+j} n^{r-j} 
\big)
\otimes \tilde{D}^{p+r-j} t^{m+j-r} n^j 
\\
& =  (-q)^p  \sum_{j=0}^r \binom{r}{j} q^{j(r-j)}    
{D}^p c^{m+j} d^{r-j} 
\otimes \tilde{D}^{p+r-j} t^{m+j-r} n^j  
\\
&=  (-q)^p \delta_2 ({D}^p c^m  d^{r} ) 
=
\delta_2 \circ \gamma_2 (\tilde{D}^p t^m  n^{r} )  \; ,
\end{align*}
showing that $\gamma_2$ is a comodule map, as claimed.
Next we show that $\gamma_2$
is  convolution invertible. Its inverse $\bar{\gamma}_2: \widetilde{\cO}_q(P)  \to A_2$   is
given by  
\beq\label{cleaving-gl2-inv2}
\bar{\gamma}_2 (\tilde{D}^p t^m   n^{r} ) := 
(-1)^p q^{-p  + r (r-1)   }   d^r c^{-m}     {D}^{-r-p}   \, .
\eeq
It is indeed immediate to verify that
\begin{align*}
(\gamma_2 * \bar{\gamma}_2)(\tilde{D}^p t^m    ) &=
 \gamma_2 \big(\tilde{D}^p t^{m}  \big)  \bar{\gamma}_2 (\tilde{D}^{p} t^{m} )
 =   
 {D}^p c^{m}    c^{-m}  {D}^{-p}   =1 = \varepsilon (\tilde{D}^p t^m    )
\end{align*}
and similarly 
$( \bar{\gamma}_2 *\gamma_2)(\tilde{D}^p t^m    )= 1$.
While for $r\neq 0$,  
\begin{align*}
(\gamma_2 * \bar{\gamma}_2)(\tilde{D}^p t^m  n^{r} ) &=
 \sum_{j=0}^r \binom{r}{j} q^{j(r-j)} \gamma_2 \big(
\tilde{D}^p t^{m+j} n^{r-j} 
\big) \bar{\gamma}_2 (\tilde{D}^{p+r-j} t^{m+j-r} n^j )
\\
& = \sum_{j=0}^r  \binom{r}{j}  (-1)^{r-j} q^{j(r-j)} q^p q^{-p-r+j +j(j-1)}    
{D}^p c^{m+j} d^{r-j}  d^j c^{-m-j+r}  {D}^{-p-r} 
  \\
  & = (-1)^r \sum_{j=0}^r  \binom{r}{j}  (-1)^{j} q^{jr -r }    
q^{-r(m+j-2p)}  d^{r}    c^{ r}  {D}^{-r} 
  \\
  & = (-1)^r   q^{-r (m-2p+1)}     d^{r}    c^{ r}  {D}^{-r} \sum_{j=0}^r  \binom{r}{j}(-1)^{j}    =0
\end{align*}
together with
\begin{align*}
( \bar{\gamma}_2 &* \gamma_2 )(\tilde{D}^p t^m  n^{r} ) \\&=
 \sum_{j=0}^r \binom{r}{j} q^{j(r-j)} \bar{\gamma}_2 \big(
\tilde{D}^p t^{m+j} n^{r-j} 
\big)
 {\gamma}_2 (\tilde{D}^{p+r-j} t^{m+j-r} n^j )
\\
& = \sum_{j=0}^r \binom{r}{j}  (-1)^{r-j} q^{j(r-j)}   
   q^{-p+(r-j)(r-j-1)}  q^{p+r-j}
d^{r-j}c^{-m-j}    {D}^{-p-r+j} {D}^{p+r-j} c^{m+j-r} d^j
\\
& =  (-1)^{r} \sum_{j=0}^r \binom{r}{j} (-1)^{j}  q^{r(r-j)}  
q^{-r(r-j)} c^{-r}   d^r
  \\
& =  (-1)^r c^{-r}  d^r   \sum_{j=0}^r  \binom{r}{j} (-1)^j =0.
\end{align*}
This concludes the proof that 
$\gamma_2$
  in \eqref{cleaving2-gl2} is   a cleaving map for the extension.

\end{document}